\theoremstyle{definition}
\newtheorem{thm}{Theorem}[section]
\newtheorem{prop}[thm]{Proposition}
\newtheorem{lemma}[thm]{Lemma}
\newtheorem{defn}[thm]{Definition}
\newtheorem{cor}[thm]{Corollary}
\newtheorem{obs}[thm]{Observation}
\newtheorem{rem}[thm]{Remark}
\DeclareMathOperator{\Des}{Des}
\DeclareMathOperator{\des}{des}
\DeclareMathOperator{\pk}{pk}
\DeclareMathOperator{\lpk}{lpk}
\DeclareMathOperator{\lp}{\leq^+}
\DeclareMathOperator{\len}{\leq^{--}}
\DeclareMathOperator{\gn}{\geq^{--}}
\DeclareMathOperator{\op}{\Omega}
\DeclareMathOperator{\A}{\mathcal{A}}
\DeclareMathOperator{\LL}{\mathcal{L}}
\DeclareMathOperator{\sep}{sep}
\DeclareMathOperator{\Prob}{Pr}
\newcommand{\ZZ}{\mathbb{Z}}
\newcommand{\NN}{\mathbb{N}}
\title{Card shuffling and $P$-partitions}
\author[Fulman]{Jason Fulman}%
\address{Department of Mathematics, USC, Los Angeles, CA}
\email{fulman@usc.edu}
\author[Petersen]{T. Kyle Petersen}%
\address{Department of Mathematical Sciences, DePaul University, Chicago, IL}
\email{tpeter21@depaul.edu}
\thanks{Fulman is supported by Simons Foundation Grant 400528. Petersen is supported by Simons Foundation Collaboration Travel Grant 353772. We thank Persi Diaconis for discussions about shuffling, and the referees for helpful comments.}
\date{April 15, 2021}
\begin{document}

\begin{abstract}
In this expository article, we highlight the direct connection between card shuffling and the functions known as $P$-partitions that come from algebraic combinatorics. While many (but not all) of the results we discuss are known, we give a unified treatment. The key idea is this: the probability of obtaining a permutation $\pi$ from shelf shuffling is the probability that a random $P$-partition is sorted by $\pi$, and the probability of obtaining $\pi$ from riffle shuffling is the probability that a random $P$-partition is sorted by $\pi^{-1}$.
\end{abstract}

\maketitle

\section{Introduction}

Methods for mixing a deck of playing cards have probably existed for as long as humans have played card games. Having a well-mixed deck is a central tenet of fair game play, while ill-mixed decks lead to subtle advantages for savvy players. The common English term for mixing a deck of cards is \emph{shuffling}.

In many parts of the world, the standard method for shuffling a deck of cards is to cut the deck into two (or more) piles and then to interleave the piles, with the cards in each pile staying in the same relative order. In the mathematical literature this type of shuffle is now modeled with the \emph{riffle shuffle}, first developed by Gilbert and Shannon in the 1950s for Bell Labs, and independently by Reeds in an unpublished manuscript from 1981. Bayer and Diaconis popularized the riffle shuffle with their landmark paper \cite{BD}. We will define the riffle shuffle precisely in Section \ref{sec:equivalence}.

Another type of shuffling, known as \emph{shelf shuffling}, is used in casinos. In this case a machine with a number of shelves mixes cards. This machine places cards one at a time onto a random shelf. Each shelf ends up with a small pile of cards and these piles are then removed and placed atop one another to form the mixed deck. In their 2013 paper \cite{DFH}, Diaconis, Fulman, and Holmes analyzed a mathematical model for shelf shuffling. We will precisely define shelf shuffling in Section \ref{sec:equivalence} as well.

Ever since the Bayer and Diaconis paper on riffle shuffling \cite{BD}, it has been well-understood that permutation statistics, such as the number of \emph{descents}, play an important role in understanding card shuffling. In particular, a key identity used in \cite{BD} to study the Gilbert-Shannon-Reeds riffle shuffle is the following: for any permutation $\pi$ in $S_n$,
\begin{equation}\label{eq:permsum}
 \binom{kl+n-\des(\pi)-1}{n} = \sum_{\sigma\tau=\pi} \binom{k + n-\des(\sigma) -1}{n} \binom{l+n-\des(\tau)-1}{n},
\end{equation}
where $\des(\pi) = |\{i: \pi(i)>\pi(i+1)\}|$ is the number of descents of $\pi$.

While the identity \eqref{eq:permsum} may be given various proofs
(the earliest of which is perhaps the one found in \cite{MP}), one of the nicest of these follows from work of Gessel \cite{Ge} in 1984, using the theory of \emph{$P$-partitions} \cite{Ge}. The ``$P$'' in $P$-partition stands for ``partially ordered set'' or ``poset.'' Stanley defined $P$-partitions as a way to generalize integer partitions to study plane partitions, but they can also be used to give a combinatorial framework for the study of symmetric and quasisymmetric functions, with applications to permutation enumeration. See Gessel's survey \cite{Ge:survey}, \cite[Section 4.5]{EC1}, and \cite[Section 7.19]{EC2}.

Identities similar to Equation \eqref{eq:permsum} show up in work of Petersen \cite{Pet} from 2007 using a slightly more general notion of $P$-partition that includes but also builds on Stembridge's notion of enriched $P$-partitions \cite{Stem}. In these cases, it is the number of \emph{peaks} of a permutation that matter (instances in which $\pi(i-1)<\pi(i)>\pi(i+1)$), rather than the number of descents.

The number of peaks proved crucial to the analysis of the shelf shuffler machine studied by Diaconis, Fulman, and Holmes \cite{DFH}. (In fact, they used a $P$-partition argument to prove one of their main results \cite[Theorem 3.2]{DFH}.) In this paper, we will use $P$-partitions to study a family of related shuffling schemes. Although the paper \cite{Dfor} mentions a connection between riffle shuffling and $P$-partitions, this does not seem to be widely known or explored (Gessel's survey \cite{Ge:survey} of $P$-partitions says nothing about the connection to shuffling).

We now describe a general framework used in the analysis of card shuffling. Each shuffling scheme gives a family of probability distributions on the set of permutations. Let $\Prob(m;\pi)$ denote any of these probability distributions on $S_n$ coming from shelf shuffling with $m$ shelves, or from riffle shuffling with $m$ piles. In the group algebra we define the generating function:
\[
 \phi_{\Prob}(m,n) = \sum_{\pi \in S_n} \Prob(m;\pi)\cdot \pi.
\]
Repeated shuffles correspond to the multiplying $\phi_{\Prob}(m)$ by itself:
\begin{align*}
\phi_{\Prob}(m,n)^2 &= \left(\sum_{\sigma \in S_n} \Prob(m;\sigma)\cdot\sigma\right)\left(\sum_{\tau \in S_n} \Prob(m;\tau)\cdot\tau\right),\\
&= \sum_{\pi \in S_n} \left( \sum_{\sigma \in S_n} \Prob(m;\sigma)\Prob(m;\sigma^{-1}\pi) \right) \pi.
\end{align*}
It transpires that each shuffling scheme we study generates an ergodic Markov chain, so with repeated shuffling we have convergence to a unique stationary distribution. Moreover, this distribution turns out to be the uniform distribution in each case:
\[
 \phi_{\Prob}(m,n)^k \to \sum_{\pi \in S_n} \frac{1}{n!} \pi \quad \mbox{ as $k\to \infty$.}\footnote{Such convergence to uniformity occurs under very mild conditions, as Aldous and Diaconis explain in \cite{AD}.}
\]

We can explain how Equation \eqref{eq:permsum} is relevant now. For the $m$-riffle shuffle studied by Bayer and Diaconis \cite{BD} ($m$ is the number of piles riffled together), we have
\[
 \Prob(m;\pi) = \frac{\binom{m + n-\des(\pi^{-1}) -1}{n}}{m^n},
\]
and thus \eqref{eq:permsum} implies that $\phi_{\Prob}(m,n)^2 = \phi_{\Prob}(m^2,n)$, and for larger $k$, $\phi_{\Prob}(m,n)^k = \phi_{\Prob}(m^k,n)$. This means that to analyze repeated shuffles, it suffices to study just one shuffle, but for an arbitrary number of piles.

This line of reasoning carries through for the probability distributions coming from other shuffling schemes connected to $P$-partitions, as we will explain later in the article. We will reproduce key results in both the classical riffle shuffle and shelf shuffler settings, e.g.,
\begin{itemize}
\item probability formulas,
\item convolution properties, and
\item convergence estimates.
\end{itemize}
Moreover, we will give analogous new results that use \emph{left enriched $P$-partitions} to analyze a ``lazy'' shelf shuffler and a corresponding riffle shuffle. In this new situation, the key permutation statistic is the number of \emph{left peaks}, for which we will obtain a recent enumerative result of Gessel and Zhuang \cite{GeZh} about the distribution of left peaks according to cycle type.

\begin{rem}[Other shuffles]
There are other mathematical models for shuffling cards that have been studied that we will not revisit in this paper, such as ``shuffles with a cut'' \cite{F0}, ``top-to-random shuffles'' \cite{BHR, BrD}, and the ``overhand shuffle'' \cite{Pem}. We also mention that the term ``shuffle'' is often used in algebraic combinatorics to mean the multiset of all interleavings of two words. See, e.g., \cite{LR}. While this notion of a shuffle is analogous to the riffle shuffle (and can in fact be useful in the study of card shuffling) it is not what we mean by a shuffle in this paper.
\end{rem}

\subsection*{Organization of the paper}

In Section \ref{sec:equivalence}, we will show how shuffling and random sampling of $P$-partitions are equivalent. In Section \ref{sec:enum}, we survey enumerative results in the $P$-partition literature and translate them into probabilistic statements about shuffling. In Section \ref{sec:converge}, we give convergence estimates for shuffling, and in Section \ref{sec:cycle}, we study the distribution of
cycle structure for lazy shelf shufflers.

\section{Equivalence of Shelf Shuffling and $P$-partitions}\label{sec:equivalence}

In this section we establish the direct link between $P$-partitions and shuffling.

\subsection{Shelf shuffling}

We will now describe a new method of shuffling that we call \emph{lazy shelf shuffling}, along with the method of shelf shuffling studied by Diaconis, Fulman, and Holmes \cite{DFH} and inverse riffle shuffling studied by Bayer and Diaconis \cite{BD}. Let $n$ denote the number of cards in the deck, and suppose the cards are labeled $1,2,\ldots,n$ from top to bottom. We quote here from the description in \cite{DFH} of an actual machine with 10 shelves, manufactured for use in casinos:
\begin{quote}
A deck of cards is dropped into the top of the box. An internal elevator moves the deck up and down within the box. Cards are sequentially dealt from the bottom of the deck onto the shelves; shelves are chosen uniformly at random at the command of a random number generator. Each card is randomly placed above or below previous cards on the shelf with probability 1/2. At the end, each shelf contains about 1/10 of the deck. The ten piles are now assembled into one pile, in random order.
\end{quote}

We modify this description only slightly. First, we allow any fixed number $m$ to be the number of shelves onto which we will place the cards. (The actual machine has $m=10$.) This was of course done in \cite{DFH}. Second, for convenience, we also assume the cards are sequentially dealt \emph{from the top of the deck}, rather than from the bottom, i.e., we place card 1 first, then card 2, and so on. This choice makes some difference in the combinatorial details (allowing us to work with peaks rather than valleys), but little difference in the statistical analysis. See Remark \ref{rem:topvbot}. Third, note the final step of assembling the piles into random order is superfluous, so we put the cards on the first shelf on top, followed by the cards on the second shelf, etc.

A more significant difference from the standard shelf shuffler is in the addition of another shelf, at the top of the box, onto which cards may only be placed below previously placed cards.

To get our three card shuffling schemes from our imaginary machine, we install a control panel with buttons that can be used to direct the machine to shuffle in one of three modes. We pretend there are three buttons on the front of the machine, labeled \textbf{LAZY}, \textbf{STANDARD}, and \textbf{STRICT}. Here is a description of each operating mode.

\begin{itemize}
\item \textbf{Strict mode}. When the machine is in strict mode, it only places cards below cards that are already on a shelf. In this way, strict mode only has to choose a random shelf for each card, each with probability $1/m$. Thus each way of assigning the cards to the shelves occurs with probability $1/m^n$. For example, with $m=3$ piles, and $n=9$ cards, we might shuffle cards as shown in Table \ref{tab:riffle}. We obtain the permutation $\pi = 234569178$ by reading the card labels from the top of the top shelf to the bottom of the bottom shelf. We remark that different ways of assigning the cards to the shelves can produce the same permutation, e.g., we could have inserted card $4$ in shelf $1$ and obtained the same permutation.

\begin{table}
\begin{tikzpicture}
\draw(-.2,-.75) node[left] {Card:};
\draw(-.2,-1.25) node[left] {Shelf:};
\draw(-.2,6.26) node[left] {Shelf 1:};
\draw(-.2,3.75) node[left] {Shelf 2:};
\draw(-.2,1.25) node[left] {Shelf 3:};
\draw (.5,-1.25) node {$3$};
\draw (.5,-.75) node {$1$};
\draw (2,-1.25) node {$1$};
\draw (2,-.75) node {$2$};
\draw (3.5,-1.25) node {$1$};
\draw (3.5,-.75) node {$3$};
\draw (5,-1.25) node {$2$};
\draw (5,-.75) node {$4$};
\draw (6.5,-1.25) node {$2$};
\draw (6.5,-.75) node {$5$};
\draw (8,-1.25) node {$2$};
\draw (8,-.75) node {$6$};
\draw (9.5,-1.25) node {$3$};
\draw (9.5,-.75) node {$7$};
\draw (11,-1.25) node {$3$};
\draw (11,-.75) node {$8$};
\draw (12.5,-1.25) node {$2$};
\draw (12.5,-.75) node {$9$};
\draw [line width=4] (-.5,-.25)--(13.5,-.25);
\draw(.5,0) node[above] {
 \begin{tikzpicture}[baseline=0]
  \draw[line width=3, rounded corners] (0,0)--(0,7.5)--(1.5,7.5)--(1.5,0)--(0,0)--cycle;
  \draw[line width=3] (0,2.5)--(1.5,2.5);
  \draw[line width=3] (0,5)--(1.5,5);
  \draw(.75,0) node{
    \begin{tikzpicture}[baseline=0]
     \draw[fill=white!80!black] (0.3,0.15)--(0.6,1.1)--(1.2,1.1)--(0.9,0.15)--(0.3,0.15)--cycle;
     \draw (.98,0.76) node[scale=.75, rotate=-12] {$1$};
     \draw (0.45,0.12) node[scale=.75, rotate=-12] {$1$};
     \end{tikzpicture}
     };
 \end{tikzpicture}
};
\draw(2,0) node[above] {
 \begin{tikzpicture}[baseline=0]
  \draw[line width=3, rounded corners] (0,0)--(0,7.5)--(1.5,7.5)--(1.5,0)--(0,0)--cycle;
  \draw[line width=3] (0,2.5)--(1.5,2.5);
  \draw[line width=3] (0,5)--(1.5,5);
  \draw(.75,5) node{
    \begin{tikzpicture}[baseline=0]
     \draw[fill=white!80!black] (0.3,0.15)--(0.6,1.1)--(1.2,1.1)--(0.9,0.15)--(0.3,0.15)--cycle;
     \draw (.98,0.76) node[scale=.75, rotate=-12] {$2$};
     \draw (0.45,0.12) node[scale=.75, rotate=-12] {$2$};
     \end{tikzpicture}
     };
  \draw(.75,0) node{
    \begin{tikzpicture}[baseline=0]
     \draw[fill=white] (0.3,0.15)--(0.6,1.1)--(1.2,1.1)--(0.9,0.15)--(0.3,0.15)--cycle;
     \draw (.98,0.76) node[scale=.75, rotate=-12] {$1$};
     \draw (0.45,0.12) node[scale=.75, rotate=-12] {$1$};
     \end{tikzpicture}
     };
 \end{tikzpicture}
};
\draw(3.5,0) node[above] {
 \begin{tikzpicture}[baseline=0]
  \draw[line width=3, rounded corners] (0,0)--(0,7.5)--(1.5,7.5)--(1.5,0)--(0,0)--cycle;
  \draw[line width=3] (0,2.5)--(1.5,2.5);
  \draw[line width=3] (0,5)--(1.5,5);
  \draw(.75,5) node{
    \begin{tikzpicture}[baseline=0]
     \draw[fill=white!80!black] (0.3,0.15)--(0.6,1.1)--(1.2,1.1)--(0.9,0.15)--(0.3,0.15)--cycle;
     \draw (.98,0.76) node[scale=.75, rotate=-12] {$3$};
     \draw (0.45,0.12) node[scale=.75, rotate=-12] {$3$};
     \end{tikzpicture}
     };
  \draw(.75,5+.35) node{
    \begin{tikzpicture}[baseline=0]
     \draw[fill=white] (0.3,0.15)--(0.6,1.1)--(1.2,1.1)--(0.9,0.15)--(0.3,0.15)--cycle;
     \draw (.98,0.76) node[scale=.75, rotate=-12] {$2$};
     \draw (0.45,0.12) node[scale=.75, rotate=-12] {$2$};
     \end{tikzpicture}
     };
  \draw(.75,0) node{
    \begin{tikzpicture}[baseline=0]
     \draw[fill=white] (0.3,0.15)--(0.6,1.1)--(1.2,1.1)--(0.9,0.15)--(0.3,0.15)--cycle;
     \draw (.98,0.76) node[scale=.75, rotate=-12] {$1$};
     \draw (0.45,0.12) node[scale=.75, rotate=-12] {$1$};
     \end{tikzpicture}
     };
 \end{tikzpicture}
};
\draw(5,0) node[above] {
 \begin{tikzpicture}[baseline=0]
  \draw[line width=3, rounded corners] (0,0)--(0,7.5)--(1.5,7.5)--(1.5,0)--(0,0)--cycle;
  \draw[line width=3] (0,2.5)--(1.5,2.5);
  \draw[line width=3] (0,5)--(1.5,5);
  \draw(.75,5) node{
    \begin{tikzpicture}[baseline=0]
     \draw[fill=white] (0.3,0.15)--(0.6,1.1)--(1.2,1.1)--(0.9,0.15)--(0.3,0.15)--cycle;
     \draw (.98,0.76) node[scale=.75, rotate=-12] {$3$};
     \draw (0.45,0.12) node[scale=.75, rotate=-12] {$3$};
     \end{tikzpicture}
     };
  \draw(.75,5+.35) node{
    \begin{tikzpicture}[baseline=0]
     \draw[fill=white] (0.3,0.15)--(0.6,1.1)--(1.2,1.1)--(0.9,0.15)--(0.3,0.15)--cycle;
     \draw (.98,0.76) node[scale=.75, rotate=-12] {$2$};
     \draw (0.45,0.12) node[scale=.75, rotate=-12] {$2$};
     \end{tikzpicture}
     };
  \draw(.75,2.5) node{
    \begin{tikzpicture}[baseline=0]
     \draw[fill=white!80!black] (0.3,0.15)--(0.6,1.1)--(1.2,1.1)--(0.9,0.15)--(0.3,0.15)--cycle;
     \draw (.98,0.76) node[scale=.75, rotate=-12] {$4$};
     \draw (0.45,0.12) node[scale=.75, rotate=-12] {$4$};
     \end{tikzpicture}
     };
  \draw(.75,0) node{
    \begin{tikzpicture}[baseline=0]
     \draw[fill=white] (0.3,0.15)--(0.6,1.1)--(1.2,1.1)--(0.9,0.15)--(0.3,0.15)--cycle;
     \draw (.98,0.76) node[scale=.75, rotate=-12] {$1$};
     \draw (0.45,0.12) node[scale=.75, rotate=-12] {$1$};
     \end{tikzpicture}
     };
 \end{tikzpicture}
};
\draw(6.5,0) node[above] {
 \begin{tikzpicture}[baseline=0]
  \draw[line width=3, rounded corners] (0,0)--(0,7.5)--(1.5,7.5)--(1.5,0)--(0,0)--cycle;
  \draw[line width=3] (0,2.5)--(1.5,2.5);
  \draw[line width=3] (0,5)--(1.5,5);
  \draw(.75,5) node{
    \begin{tikzpicture}[baseline=0]
     \draw[fill=white] (0.3,0.15)--(0.6,1.1)--(1.2,1.1)--(0.9,0.15)--(0.3,0.15)--cycle;
     \draw (.98,0.76) node[scale=.75, rotate=-12] {$3$};
     \draw (0.45,0.12) node[scale=.75, rotate=-12] {$3$};
     \end{tikzpicture}
     };
  \draw(.75,5+.35) node{
    \begin{tikzpicture}[baseline=0]
     \draw[fill=white] (0.3,0.15)--(0.6,1.1)--(1.2,1.1)--(0.9,0.15)--(0.3,0.15)--cycle;
     \draw (.98,0.76) node[scale=.75, rotate=-12] {$2$};
     \draw (0.45,0.12) node[scale=.75, rotate=-12] {$2$};
     \end{tikzpicture}
     };
  \draw(.75,2.5) node{
    \begin{tikzpicture}[baseline=0]
     \draw[fill=white!80!black] (0.3,0.15)--(0.6,1.1)--(1.2,1.1)--(0.9,0.15)--(0.3,0.15)--cycle;
     \draw (.98,0.76) node[scale=.75, rotate=-12] {$5$};
     \draw (0.45,0.12) node[scale=.75, rotate=-12] {$5$};
     \end{tikzpicture}
     };
  \draw(.75,2.5+.35) node{
    \begin{tikzpicture}[baseline=0]
     \draw[fill=white] (0.3,0.15)--(0.6,1.1)--(1.2,1.1)--(0.9,0.15)--(0.3,0.15)--cycle;
     \draw (.98,0.76) node[scale=.75, rotate=-12] {$4$};
     \draw (0.45,0.12) node[scale=.75, rotate=-12] {$4$};
     \end{tikzpicture}
     };
  \draw(.75,0) node{
    \begin{tikzpicture}[baseline=0]
     \draw[fill=white] (0.3,0.15)--(0.6,1.1)--(1.2,1.1)--(0.9,0.15)--(0.3,0.15)--cycle;
     \draw (.98,0.76) node[scale=.75, rotate=-12] {$1$};
     \draw (0.45,0.12) node[scale=.75, rotate=-12] {$1$};
     \end{tikzpicture}
     };
 \end{tikzpicture}
};
\draw(8,0) node[above] {
 \begin{tikzpicture}[baseline=0]
  \draw[line width=3, rounded corners] (0,0)--(0,7.5)--(1.5,7.5)--(1.5,0)--(0,0)--cycle;
  \draw[line width=3] (0,2.5)--(1.5,2.5);
  \draw[line width=3] (0,5)--(1.5,5);
  \draw(.75,5) node{
    \begin{tikzpicture}[baseline=0]
     \draw[fill=white] (0.3,0.15)--(0.6,1.1)--(1.2,1.1)--(0.9,0.15)--(0.3,0.15)--cycle;
     \draw (.98,0.76) node[scale=.75, rotate=-12] {$3$};
     \draw (0.45,0.12) node[scale=.75, rotate=-12] {$3$};
     \end{tikzpicture}
     };
  \draw(.75,5+.35) node{
    \begin{tikzpicture}[baseline=0]
     \draw[fill=white] (0.3,0.15)--(0.6,1.1)--(1.2,1.1)--(0.9,0.15)--(0.3,0.15)--cycle;
     \draw (.98,0.76) node[scale=.75, rotate=-12] {$2$};
     \draw (0.45,0.12) node[scale=.75, rotate=-12] {$2$};
     \end{tikzpicture}
     };
  \draw(.75,2.5) node{
    \begin{tikzpicture}[baseline=0]
     \draw[fill=white!80!black] (0.3,0.15)--(0.6,1.1)--(1.2,1.1)--(0.9,0.15)--(0.3,0.15)--cycle;
     \draw (.98,0.76) node[scale=.75, rotate=-12] {$6$};
     \draw (0.45,0.12) node[scale=.75, rotate=-12] {$6$};
     \end{tikzpicture}
     };
  \draw(.75,2.5+.35) node{
    \begin{tikzpicture}[baseline=0]
     \draw[fill=white] (0.3,0.15)--(0.6,1.1)--(1.2,1.1)--(0.9,0.15)--(0.3,0.15)--cycle;
     \draw (.98,0.76) node[scale=.75, rotate=-12] {$5$};
     \draw (0.45,0.12) node[scale=.75, rotate=-12] {$5$};
     \end{tikzpicture}
     };
  \draw(.75,2.5+.7) node{
    \begin{tikzpicture}[baseline=0]
     \draw[fill=white] (0.3,0.15)--(0.6,1.1)--(1.2,1.1)--(0.9,0.15)--(0.3,0.15)--cycle;
     \draw (.98,0.76) node[scale=.75, rotate=-12] {$4$};
     \draw (0.45,0.12) node[scale=.75, rotate=-12] {$4$};
     \end{tikzpicture}
     };
  \draw(.75,0) node{
    \begin{tikzpicture}[baseline=0]
     \draw[fill=white] (0.3,0.15)--(0.6,1.1)--(1.2,1.1)--(0.9,0.15)--(0.3,0.15)--cycle;
     \draw (.98,0.76) node[scale=.75, rotate=-12] {$1$};
     \draw (0.45,0.12) node[scale=.75, rotate=-12] {$1$};
     \end{tikzpicture}
     };
 \end{tikzpicture}
};
\draw(9.5,0) node[above] {
 \begin{tikzpicture}[baseline=0]
  \draw[line width=3, rounded corners] (0,0)--(0,7.5)--(1.5,7.5)--(1.5,0)--(0,0)--cycle;
  \draw[line width=3] (0,2.5)--(1.5,2.5);
  \draw[line width=3] (0,5)--(1.5,5);
  \draw(.75,5) node{
    \begin{tikzpicture}[baseline=0]
     \draw[fill=white] (0.3,0.15)--(0.6,1.1)--(1.2,1.1)--(0.9,0.15)--(0.3,0.15)--cycle;
     \draw (.98,0.76) node[scale=.75, rotate=-12] {$3$};
     \draw (0.45,0.12) node[scale=.75, rotate=-12] {$3$};
     \end{tikzpicture}
     };
  \draw(.75,5+.35) node{
    \begin{tikzpicture}[baseline=0]
     \draw[fill=white] (0.3,0.15)--(0.6,1.1)--(1.2,1.1)--(0.9,0.15)--(0.3,0.15)--cycle;
     \draw (.98,0.76) node[scale=.75, rotate=-12] {$2$};
     \draw (0.45,0.12) node[scale=.75, rotate=-12] {$2$};
     \end{tikzpicture}
     };
  \draw(.75,2.5) node{
    \begin{tikzpicture}[baseline=0]
     \draw[fill=white] (0.3,0.15)--(0.6,1.1)--(1.2,1.1)--(0.9,0.15)--(0.3,0.15)--cycle;
     \draw (.98,0.76) node[scale=.75, rotate=-12] {$6$};
     \draw (0.45,0.12) node[scale=.75, rotate=-12] {$6$};
     \end{tikzpicture}
     };
  \draw(.75,2.5+.35) node{
    \begin{tikzpicture}[baseline=0]
     \draw[fill=white] (0.3,0.15)--(0.6,1.1)--(1.2,1.1)--(0.9,0.15)--(0.3,0.15)--cycle;
     \draw (.98,0.76) node[scale=.75, rotate=-12] {$5$};
     \draw (0.45,0.12) node[scale=.75, rotate=-12] {$5$};
     \end{tikzpicture}
     };
  \draw(.75,2.5+.7) node{
    \begin{tikzpicture}[baseline=0]
     \draw[fill=white] (0.3,0.15)--(0.6,1.1)--(1.2,1.1)--(0.9,0.15)--(0.3,0.15)--cycle;
     \draw (.98,0.76) node[scale=.75, rotate=-12] {$4$};
     \draw (0.45,0.12) node[scale=.75, rotate=-12] {$4$};
     \end{tikzpicture}
     };
  \draw(.75,0) node{
    \begin{tikzpicture}[baseline=0]
     \draw[fill=white!80!black] (0.3,0.15)--(0.6,1.1)--(1.2,1.1)--(0.9,0.15)--(0.3,0.15)--cycle;
     \draw (.98,0.76) node[scale=.75, rotate=-12] {$7$};
     \draw (0.45,0.12) node[scale=.75, rotate=-12] {$7$};
     \end{tikzpicture}
     };
  \draw(.75,.35) node{
    \begin{tikzpicture}[baseline=0]
     \draw[fill=white] (0.3,0.15)--(0.6,1.1)--(1.2,1.1)--(0.9,0.15)--(0.3,0.15)--cycle;
     \draw (.98,0.76) node[scale=.75, rotate=-12] {$1$};
     \draw (0.45,0.12) node[scale=.75, rotate=-12] {$1$};
     \end{tikzpicture}
     };
 \end{tikzpicture}
};
\draw(11,0) node[above] {
 \begin{tikzpicture}[baseline=0]
  \draw[line width=3, rounded corners] (0,0)--(0,7.5)--(1.5,7.5)--(1.5,0)--(0,0)--cycle;
  \draw[line width=3] (0,2.5)--(1.5,2.5);
  \draw[line width=3] (0,5)--(1.5,5);
  \draw(.75,5) node{
    \begin{tikzpicture}[baseline=0]
     \draw[fill=white] (0.3,0.15)--(0.6,1.1)--(1.2,1.1)--(0.9,0.15)--(0.3,0.15)--cycle;
     \draw (.98,0.76) node[scale=.75, rotate=-12] {$3$};
     \draw (0.45,0.12) node[scale=.75, rotate=-12] {$3$};
     \end{tikzpicture}
     };
  \draw(.75,5+.35) node{
    \begin{tikzpicture}[baseline=0]
     \draw[fill=white] (0.3,0.15)--(0.6,1.1)--(1.2,1.1)--(0.9,0.15)--(0.3,0.15)--cycle;
     \draw (.98,0.76) node[scale=.75, rotate=-12] {$2$};
     \draw (0.45,0.12) node[scale=.75, rotate=-12] {$2$};
     \end{tikzpicture}
     };
  \draw(.75,2.5) node{
    \begin{tikzpicture}[baseline=0]
     \draw[fill=white] (0.3,0.15)--(0.6,1.1)--(1.2,1.1)--(0.9,0.15)--(0.3,0.15)--cycle;
     \draw (.98,0.76) node[scale=.75, rotate=-12] {$6$};
     \draw (0.45,0.12) node[scale=.75, rotate=-12] {$6$};
     \end{tikzpicture}
     };
  \draw(.75,2.5+.35) node{
    \begin{tikzpicture}[baseline=0]
     \draw[fill=white] (0.3,0.15)--(0.6,1.1)--(1.2,1.1)--(0.9,0.15)--(0.3,0.15)--cycle;
     \draw (.98,0.76) node[scale=.75, rotate=-12] {$5$};
     \draw (0.45,0.12) node[scale=.75, rotate=-12] {$5$};
     \end{tikzpicture}
     };
  \draw(.75,2.5+.7) node{
    \begin{tikzpicture}[baseline=0]
     \draw[fill=white] (0.3,0.15)--(0.6,1.1)--(1.2,1.1)--(0.9,0.15)--(0.3,0.15)--cycle;
     \draw (.98,0.76) node[scale=.75, rotate=-12] {$4$};
     \draw (0.45,0.12) node[scale=.75, rotate=-12] {$4$};
     \end{tikzpicture}
     };
  \draw(.75,0) node{
    \begin{tikzpicture}[baseline=0]
     \draw[fill=white!80!black] (0.3,0.15)--(0.6,1.1)--(1.2,1.1)--(0.9,0.15)--(0.3,0.15)--cycle;
     \draw (.98,0.76) node[scale=.75, rotate=-12] {$8$};
     \draw (0.45,0.12) node[scale=.75, rotate=-12] {$8$};
     \end{tikzpicture}
     };
  \draw(.75,.35) node{
    \begin{tikzpicture}[baseline=0]
     \draw[fill=white] (0.3,0.15)--(0.6,1.1)--(1.2,1.1)--(0.9,0.15)--(0.3,0.15)--cycle;
     \draw (.98,0.76) node[scale=.75, rotate=-12] {$7$};
     \draw (0.45,0.12) node[scale=.75, rotate=-12] {$7$};
     \end{tikzpicture}
     };
  \draw(.75,.7) node{
    \begin{tikzpicture}[baseline=0]
     \draw[fill=white] (0.3,0.15)--(0.6,1.1)--(1.2,1.1)--(0.9,0.15)--(0.3,0.15)--cycle;
     \draw (.98,0.76) node[scale=.75, rotate=-12] {$1$};
     \draw (0.45,0.12) node[scale=.75, rotate=-12] {$1$};
     \end{tikzpicture}
     };
 \end{tikzpicture}
};
\draw(12.5,0) node[above] {
 \begin{tikzpicture}[baseline=0]
  \draw[line width=3, rounded corners] (0,0)--(0,7.5)--(1.5,7.5)--(1.5,0)--(0,0)--cycle;
  \draw[line width=3] (0,2.5)--(1.5,2.5);
  \draw[line width=3] (0,5)--(1.5,5);
  \draw(.75,5) node{
    \begin{tikzpicture}[baseline=0]
     \draw[fill=white] (0.3,0.15)--(0.6,1.1)--(1.2,1.1)--(0.9,0.15)--(0.3,0.15)--cycle;
     \draw (.98,0.76) node[scale=.75, rotate=-12] {$3$};
     \draw (0.45,0.12) node[scale=.75, rotate=-12] {$3$};
     \end{tikzpicture}
     };
  \draw(.75,5+.35) node{
    \begin{tikzpicture}[baseline=0]
     \draw[fill=white] (0.3,0.15)--(0.6,1.1)--(1.2,1.1)--(0.9,0.15)--(0.3,0.15)--cycle;
     \draw (.98,0.76) node[scale=.75, rotate=-12] {$2$};
     \draw (0.45,0.12) node[scale=.75, rotate=-12] {$2$};
     \end{tikzpicture}
     };
  \draw(.75,2.5) node{
    \begin{tikzpicture}[baseline=0]
     \draw[fill=white!80!black] (0.3,0.15)--(0.6,1.1)--(1.2,1.1)--(0.9,0.15)--(0.3,0.15)--cycle;
     \draw (.98,0.76) node[scale=.75, rotate=-12] {$9$};
     \draw (0.45,0.12) node[scale=.75, rotate=-12] {$9$};
     \end{tikzpicture}
     };
  \draw(.75,2.5+.35) node{
    \begin{tikzpicture}[baseline=0]
     \draw[fill=white] (0.3,0.15)--(0.6,1.1)--(1.2,1.1)--(0.9,0.15)--(0.3,0.15)--cycle;
     \draw (.98,0.76) node[scale=.75, rotate=-12] {$6$};
     \draw (0.45,0.12) node[scale=.75, rotate=-12] {$6$};
     \end{tikzpicture}
     };
  \draw(.75,2.5+.7) node{
    \begin{tikzpicture}[baseline=0]
     \draw[fill=white] (0.3,0.15)--(0.6,1.1)--(1.2,1.1)--(0.9,0.15)--(0.3,0.15)--cycle;
     \draw (.98,0.76) node[scale=.75, rotate=-12] {$5$};
     \draw (0.45,0.12) node[scale=.75, rotate=-12] {$5$};
     \end{tikzpicture}
     };
  \draw(.75,2.5+1.05) node{
    \begin{tikzpicture}[baseline=0]
     \draw[fill=white] (0.3,0.15)--(0.6,1.1)--(1.2,1.1)--(0.9,0.15)--(0.3,0.15)--cycle;
     \draw (.98,0.76) node[scale=.75, rotate=-12] {$4$};
     \draw (0.45,0.12) node[scale=.75, rotate=-12] {$4$};
     \end{tikzpicture}
     };
  \draw(.75,0) node{
    \begin{tikzpicture}[baseline=0]
     \draw[fill=white] (0.3,0.15)--(0.6,1.1)--(1.2,1.1)--(0.9,0.15)--(0.3,0.15)--cycle;
     \draw (.98,0.76) node[scale=.75, rotate=-12] {$8$};
     \draw (0.45,0.12) node[scale=.75, rotate=-12] {$8$};
     \end{tikzpicture}
     };
  \draw(.75,.35) node{
    \begin{tikzpicture}[baseline=0]
     \draw[fill=white] (0.3,0.15)--(0.6,1.1)--(1.2,1.1)--(0.9,0.15)--(0.3,0.15)--cycle;
     \draw (.98,0.76) node[scale=.75, rotate=-12] {$7$};
     \draw (0.45,0.12) node[scale=.75, rotate=-12] {$7$};
     \end{tikzpicture}
     };
  \draw(.75,.7) node{
    \begin{tikzpicture}[baseline=0]
     \draw[fill=white] (0.3,0.15)--(0.6,1.1)--(1.2,1.1)--(0.9,0.15)--(0.3,0.15)--cycle;
     \draw (.98,0.76) node[scale=.75, rotate=-12] {$1$};
     \draw (0.45,0.12) node[scale=.75, rotate=-12] {$1$};
     \end{tikzpicture}
     };
 \end{tikzpicture}
};
\end{tikzpicture}
\bigskip
\caption{A strict shuffle of $n=9$ cards into $m=3$ shelves resulting in the permutation $\pi = 234569178$. Recently placed cards are shaded.}\label{tab:riffle}
\end{table}

\item \textbf{Standard mode.} This is the card shuffling model studied in \cite{DFH}, except that cards are dealt from the top rather than the bottom. In this mode, for each card $i$ the machine chooses a shelf with probability $1/m$, then places card $i$ at the top of the stack of cards with probability $1/2$, and at the bottom of that stack with probability $1/2$. Alternatively, we can imagine choosing a single (shelf, side) pair with probability $1/2m$, where ``side'' can be ``top'' or ``bottom.'' Each outcome thus occurs with probability $1/(2m)^n$. For example, with $m=2$ piles, and $n=9$ cards, the machine might shuffle cards as shown in Table \ref{tab:shelf}. We identify this shuffle with the permutation $\pi = 981257436$. Again, the same permutation can result from a variety of different outcomes.

\begin{table}

\begin{tikzpicture}
\draw(-.2,-.75) node[left] {Card:};
\draw(-.2,-1.25) node[left] {Shelf:};
\draw(-.2,4.5) node[left] {Shelf 1:};
\draw(-.2,1.5) node[left] {Shelf 2:};
\draw (.5,-1.25) node {1t};
\draw (.5,-.75) node {$1$};
\draw (2,-1.25) node {1b};
\draw (2,-.75) node {$2$};
\draw (3.5,-1.25) node {2t};
\draw (3.5,-.75) node {$3$};
\draw (5,-1.25) node {2t};
\draw (5,-.75) node {$4$};
\draw (6.5,-1.25) node {1b};
\draw (6.5,-.75) node {$5$};
\draw (8,-1.25) node {2b};
\draw (8,-.75) node {$6$};
\draw (9.5,-1.25) node {2t};
\draw (9.5,-.75) node {$7$};
\draw (11,-1.25) node {1t};
\draw (11,-.75) node {$8$};
\draw (12.5,-1.25) node {1t};
\draw (12.5,-.75) node {$9$};
\draw [line width=4] (-.5,-.25)--(13.5,-.25);
\draw(.5,0) node[above] {
 \begin{tikzpicture}[baseline=0]
\draw[line width=3, rounded corners] (0,0)--(0,6)--(1.5,6)--(1.5,0)--(0,0)--cycle;
  \draw[line width=3] (0,3)--(1.5,3);
  \draw(.75,3) node{
    \begin{tikzpicture}[baseline=0]
     \draw[fill=white!80!black] (0.3,0.15)--(0.6,1.1)--(1.2,1.1)--(0.9,0.15)--(0.3,0.15)--cycle;
     \draw (.98,0.76) node[scale=.75, rotate=-12] {$1$};
     \draw (0.45,0.12) node[scale=.75, rotate=-12] {$1$};
     \end{tikzpicture}
     };
 \end{tikzpicture}
};
\draw(2,0) node[above] {
 \begin{tikzpicture}[baseline=0]
\draw[line width=3, rounded corners] (0,0)--(0,6)--(1.5,6)--(1.5,0)--(0,0)--cycle;
  \draw[line width=3] (0,3)--(1.5,3);
  \draw(.75,3) node{
    \begin{tikzpicture}[baseline=0]
     \draw[fill=white!80!black] (0.3,0.15)--(0.6,1.1)--(1.2,1.1)--(0.9,0.15)--(0.3,0.15)--cycle;
     \draw (.98,0.76) node[scale=.75, rotate=-12] {$2$};
     \draw (0.45,0.12) node[scale=.75, rotate=-12] {$2$};
     \end{tikzpicture}
     };
  \draw(.75,3+.35) node{
    \begin{tikzpicture}[baseline=0]
     \draw[fill=white] (0.3,0.15)--(0.6,1.1)--(1.2,1.1)--(0.9,0.15)--(0.3,0.15)--cycle;
     \draw (.98,0.76) node[scale=.75, rotate=-12] {$1$};
     \draw (0.45,0.12) node[scale=.75, rotate=-12] {$1$};
     \end{tikzpicture}
     };
 \end{tikzpicture}
};
\draw(3.5,0) node[above] {
 \begin{tikzpicture}[baseline=0]
\draw[line width=3, rounded corners] (0,0)--(0,6)--(1.5,6)--(1.5,0)--(0,0)--cycle;
  \draw[line width=3] (0,3)--(1.5,3);
  \draw(.75,3) node{
    \begin{tikzpicture}[baseline=0]
     \draw[fill=white] (0.3,0.15)--(0.6,1.1)--(1.2,1.1)--(0.9,0.15)--(0.3,0.15)--cycle;
     \draw (.98,0.76) node[scale=.75, rotate=-12] {$2$};
     \draw (0.45,0.12) node[scale=.75, rotate=-12] {$2$};
     \end{tikzpicture}
     };
  \draw(.75,3+.35) node{
    \begin{tikzpicture}[baseline=0]
     \draw[fill=white] (0.3,0.15)--(0.6,1.1)--(1.2,1.1)--(0.9,0.15)--(0.3,0.15)--cycle;
     \draw (.98,0.76) node[scale=.75, rotate=-12] {$1$};
     \draw (0.45,0.12) node[scale=.75, rotate=-12] {$1$};
     \end{tikzpicture}
     };
  \draw(.75,0) node{
    \begin{tikzpicture}[baseline=0]
     \draw[fill=white!80!black] (0.3,0.15)--(0.6,1.1)--(1.2,1.1)--(0.9,0.15)--(0.3,0.15)--cycle;
     \draw (.98,0.76) node[scale=.75, rotate=-12] {$3$};
     \draw (0.45,0.12) node[scale=.75, rotate=-12] {$3$};
     \end{tikzpicture}
     };
 \end{tikzpicture}
};
\draw(5,0) node[above] {
 \begin{tikzpicture}[baseline=0]
\draw[line width=3, rounded corners] (0,0)--(0,6)--(1.5,6)--(1.5,0)--(0,0)--cycle;
  \draw[line width=3] (0,3)--(1.5,3);
  \draw(.75,3) node{
    \begin{tikzpicture}[baseline=0]
     \draw[fill=white] (0.3,0.15)--(0.6,1.1)--(1.2,1.1)--(0.9,0.15)--(0.3,0.15)--cycle;
     \draw (.98,0.76) node[scale=.75, rotate=-12] {$2$};
     \draw (0.45,0.12) node[scale=.75, rotate=-12] {$2$};
     \end{tikzpicture}
     };
  \draw(.75,3+.35) node{
    \begin{tikzpicture}[baseline=0]
     \draw[fill=white] (0.3,0.15)--(0.6,1.1)--(1.2,1.1)--(0.9,0.15)--(0.3,0.15)--cycle;
     \draw (.98,0.76) node[scale=.75, rotate=-12] {$1$};
     \draw (0.45,0.12) node[scale=.75, rotate=-12] {$1$};
     \end{tikzpicture}
     };
  \draw(.75,0) node{
    \begin{tikzpicture}[baseline=0]
     \draw[fill=white] (0.3,0.15)--(0.6,1.1)--(1.2,1.1)--(0.9,0.15)--(0.3,0.15)--cycle;
     \draw (.98,0.76) node[scale=.75, rotate=-12] {$3$};
     \draw (0.45,0.12) node[scale=.75, rotate=-12] {$3$};
     \end{tikzpicture}
     };
  \draw(.75,.35) node{
    \begin{tikzpicture}[baseline=0]
     \draw[fill=white!80!black] (0.3,0.15)--(0.6,1.1)--(1.2,1.1)--(0.9,0.15)--(0.3,0.15)--cycle;
     \draw (.98,0.76) node[scale=.75, rotate=-12] {$4$};
     \draw (0.45,0.12) node[scale=.75, rotate=-12] {$4$};
     \end{tikzpicture}
     };
 \end{tikzpicture}
};
\draw(6.5,0) node[above] {
 \begin{tikzpicture}[baseline=0]
\draw[line width=3, rounded corners] (0,0)--(0,6)--(1.5,6)--(1.5,0)--(0,0)--cycle;
  \draw[line width=3] (0,3)--(1.5,3);
  \draw(.75,3) node{
    \begin{tikzpicture}[baseline=0]
     \draw[fill=white!80!black] (0.3,0.15)--(0.6,1.1)--(1.2,1.1)--(0.9,0.15)--(0.3,0.15)--cycle;
     \draw (.98,0.76) node[scale=.75, rotate=-12] {$5$};
     \draw (0.45,0.12) node[scale=.75, rotate=-12] {$5$};
     \end{tikzpicture}
     };
  \draw(.75,3+.35) node{
    \begin{tikzpicture}[baseline=0]
     \draw[fill=white] (0.3,0.15)--(0.6,1.1)--(1.2,1.1)--(0.9,0.15)--(0.3,0.15)--cycle;
     \draw (.98,0.76) node[scale=.75, rotate=-12] {$2$};
     \draw (0.45,0.12) node[scale=.75, rotate=-12] {$2$};
     \end{tikzpicture}
     };
  \draw(.75,3+.7) node{
    \begin{tikzpicture}[baseline=0]
     \draw[fill=white] (0.3,0.15)--(0.6,1.1)--(1.2,1.1)--(0.9,0.15)--(0.3,0.15)--cycle;
     \draw (.98,0.76) node[scale=.75, rotate=-12] {$1$};
     \draw (0.45,0.12) node[scale=.75, rotate=-12] {$1$};
     \end{tikzpicture}
     };
  \draw(.75,0) node{
    \begin{tikzpicture}[baseline=0]
     \draw[fill=white] (0.3,0.15)--(0.6,1.1)--(1.2,1.1)--(0.9,0.15)--(0.3,0.15)--cycle;
     \draw (.98,0.76) node[scale=.75, rotate=-12] {$3$};
     \draw (0.45,0.12) node[scale=.75, rotate=-12] {$3$};
     \end{tikzpicture}
     };
  \draw(.75,.35) node{
    \begin{tikzpicture}[baseline=0]
     \draw[fill=white] (0.3,0.15)--(0.6,1.1)--(1.2,1.1)--(0.9,0.15)--(0.3,0.15)--cycle;
     \draw (.98,0.76) node[scale=.75, rotate=-12] {$4$};
     \draw (0.45,0.12) node[scale=.75, rotate=-12] {$4$};
     \end{tikzpicture}
     };
 \end{tikzpicture}
};
\draw(8,0) node[above] {
 \begin{tikzpicture}[baseline=0]
\draw[line width=3, rounded corners] (0,0)--(0,6)--(1.5,6)--(1.5,0)--(0,0)--cycle;
  \draw[line width=3] (0,3)--(1.5,3);
  \draw(.75,3) node{
    \begin{tikzpicture}[baseline=0]
     \draw[fill=white] (0.3,0.15)--(0.6,1.1)--(1.2,1.1)--(0.9,0.15)--(0.3,0.15)--cycle;
     \draw (.98,0.76) node[scale=.75, rotate=-12] {$5$};
     \draw (0.45,0.12) node[scale=.75, rotate=-12] {$5$};
     \end{tikzpicture}
     };
  \draw(.75,3+.35) node{
    \begin{tikzpicture}[baseline=0]
     \draw[fill=white] (0.3,0.15)--(0.6,1.1)--(1.2,1.1)--(0.9,0.15)--(0.3,0.15)--cycle;
     \draw (.98,0.76) node[scale=.75, rotate=-12] {$2$};
     \draw (0.45,0.12) node[scale=.75, rotate=-12] {$2$};
     \end{tikzpicture}
     };
  \draw(.75,3+.7) node{
    \begin{tikzpicture}[baseline=0]
     \draw[fill=white] (0.3,0.15)--(0.6,1.1)--(1.2,1.1)--(0.9,0.15)--(0.3,0.15)--cycle;
     \draw (.98,0.76) node[scale=.75, rotate=-12] {$1$};
     \draw (0.45,0.12) node[scale=.75, rotate=-12] {$1$};
     \end{tikzpicture}
     };
  \draw(.75,0) node{
    \begin{tikzpicture}[baseline=0]
     \draw[fill=white!80!black] (0.3,0.15)--(0.6,1.1)--(1.2,1.1)--(0.9,0.15)--(0.3,0.15)--cycle;
     \draw (.98,0.76) node[scale=.75, rotate=-12] {$6$};
     \draw (0.45,0.12) node[scale=.75, rotate=-12] {$6$};
     \end{tikzpicture}
     };
  \draw(.75,.35) node{
    \begin{tikzpicture}[baseline=0]
     \draw[fill=white] (0.3,0.15)--(0.6,1.1)--(1.2,1.1)--(0.9,0.15)--(0.3,0.15)--cycle;
     \draw (.98,0.76) node[scale=.75, rotate=-12] {$3$};
     \draw (0.45,0.12) node[scale=.75, rotate=-12] {$3$};
     \end{tikzpicture}
     };
  \draw(.75,.7) node{
    \begin{tikzpicture}[baseline=0]
     \draw[fill=white] (0.3,0.15)--(0.6,1.1)--(1.2,1.1)--(0.9,0.15)--(0.3,0.15)--cycle;
     \draw (.98,0.76) node[scale=.75, rotate=-12] {$4$};
     \draw (0.45,0.12) node[scale=.75, rotate=-12] {$4$};
     \end{tikzpicture}
     };
 \end{tikzpicture}
};
\draw(9.5,0) node[above] {
 \begin{tikzpicture}[baseline=0]
\draw[line width=3, rounded corners] (0,0)--(0,6)--(1.5,6)--(1.5,0)--(0,0)--cycle;
  \draw[line width=3] (0,3)--(1.5,3);
  \draw(.75,3) node{
    \begin{tikzpicture}[baseline=0]
     \draw[fill=white] (0.3,0.15)--(0.6,1.1)--(1.2,1.1)--(0.9,0.15)--(0.3,0.15)--cycle;
     \draw (.98,0.76) node[scale=.75, rotate=-12] {$5$};
     \draw (0.45,0.12) node[scale=.75, rotate=-12] {$5$};
     \end{tikzpicture}
     };
  \draw(.75,3+.35) node{
    \begin{tikzpicture}[baseline=0]
     \draw[fill=white] (0.3,0.15)--(0.6,1.1)--(1.2,1.1)--(0.9,0.15)--(0.3,0.15)--cycle;
     \draw (.98,0.76) node[scale=.75, rotate=-12] {$2$};
     \draw (0.45,0.12) node[scale=.75, rotate=-12] {$2$};
     \end{tikzpicture}
     };
  \draw(.75,3+.7) node{
    \begin{tikzpicture}[baseline=0]
     \draw[fill=white] (0.3,0.15)--(0.6,1.1)--(1.2,1.1)--(0.9,0.15)--(0.3,0.15)--cycle;
     \draw (.98,0.76) node[scale=.75, rotate=-12] {$1$};
     \draw (0.45,0.12) node[scale=.75, rotate=-12] {$1$};
     \end{tikzpicture}
     };
  \draw(.75,0) node{
    \begin{tikzpicture}[baseline=0]
     \draw[fill=white] (0.3,0.15)--(0.6,1.1)--(1.2,1.1)--(0.9,0.15)--(0.3,0.15)--cycle;
     \draw (.98,0.76) node[scale=.75, rotate=-12] {$6$};
     \draw (0.45,0.12) node[scale=.75, rotate=-12] {$6$};
     \end{tikzpicture}
     };
  \draw(.75,.35) node{
    \begin{tikzpicture}[baseline=0]
     \draw[fill=white] (0.3,0.15)--(0.6,1.1)--(1.2,1.1)--(0.9,0.15)--(0.3,0.15)--cycle;
     \draw (.98,0.76) node[scale=.75, rotate=-12] {$3$};
     \draw (0.45,0.12) node[scale=.75, rotate=-12] {$3$};
     \end{tikzpicture}
     };
  \draw(.75,.7) node{
    \begin{tikzpicture}[baseline=0]
     \draw[fill=white] (0.3,0.15)--(0.6,1.1)--(1.2,1.1)--(0.9,0.15)--(0.3,0.15)--cycle;
     \draw (.98,0.76) node[scale=.75, rotate=-12] {$4$};
     \draw (0.45,0.12) node[scale=.75, rotate=-12] {$4$};
     \end{tikzpicture}
     };
  \draw(.75,1.05) node{
    \begin{tikzpicture}[baseline=0]
     \draw[fill=white!80!black] (0.3,0.15)--(0.6,1.1)--(1.2,1.1)--(0.9,0.15)--(0.3,0.15)--cycle;
     \draw (.98,0.76) node[scale=.75, rotate=-12] {$7$};
     \draw (0.45,0.12) node[scale=.75, rotate=-12] {$7$};
     \end{tikzpicture}
     };
 \end{tikzpicture}
};
\draw(11,0) node[above] {
 \begin{tikzpicture}[baseline=0]
  \draw[line width=3, rounded corners] (0,0)--(0,6)--(1.5,6)--(1.5,0)--(0,0)--cycle;
  \draw[line width=3] (0,3)--(1.5,3);
  \draw(.75,3) node{
    \begin{tikzpicture}[baseline=0]
     \draw[fill=white] (0.3,0.15)--(0.6,1.1)--(1.2,1.1)--(0.9,0.15)--(0.3,0.15)--cycle;
     \draw (.98,0.76) node[scale=.75, rotate=-12] {$5$};
     \draw (0.45,0.12) node[scale=.75, rotate=-12] {$5$};
     \end{tikzpicture}
     };
  \draw(.75,3+.35) node{
    \begin{tikzpicture}[baseline=0]
     \draw[fill=white] (0.3,0.15)--(0.6,1.1)--(1.2,1.1)--(0.9,0.15)--(0.3,0.15)--cycle;
     \draw (.98,0.76) node[scale=.75, rotate=-12] {$2$};
     \draw (0.45,0.12) node[scale=.75, rotate=-12] {$2$};
     \end{tikzpicture}
     };
  \draw(.75,3+.7) node{
    \begin{tikzpicture}[baseline=0]
     \draw[fill=white] (0.3,0.15)--(0.6,1.1)--(1.2,1.1)--(0.9,0.15)--(0.3,0.15)--cycle;
     \draw (.98,0.76) node[scale=.75, rotate=-12] {$1$};
     \draw (0.45,0.12) node[scale=.75, rotate=-12] {$1$};
     \end{tikzpicture}
     };
  \draw(.75,3+1.05) node{
    \begin{tikzpicture}[baseline=0]
     \draw[fill=white!80!black] (0.3,0.15)--(0.6,1.1)--(1.2,1.1)--(0.9,0.15)--(0.3,0.15)--cycle;
     \draw (.98,0.76) node[scale=.75, rotate=-12] {$8$};
     \draw (0.45,0.12) node[scale=.75, rotate=-12] {$8$};
     \end{tikzpicture}
     };
  \draw(.75,0) node{
    \begin{tikzpicture}[baseline=0]
     \draw[fill=white] (0.3,0.15)--(0.6,1.1)--(1.2,1.1)--(0.9,0.15)--(0.3,0.15)--cycle;
     \draw (.98,0.76) node[scale=.75, rotate=-12] {$6$};
     \draw (0.45,0.12) node[scale=.75, rotate=-12] {$6$};
     \end{tikzpicture}
     };
  \draw(.75,.35) node{
    \begin{tikzpicture}[baseline=0]
     \draw[fill=white] (0.3,0.15)--(0.6,1.1)--(1.2,1.1)--(0.9,0.15)--(0.3,0.15)--cycle;
     \draw (.98,0.76) node[scale=.75, rotate=-12] {$3$};
     \draw (0.45,0.12) node[scale=.75, rotate=-12] {$3$};
     \end{tikzpicture}
     };
  \draw(.75,.7) node{
    \begin{tikzpicture}[baseline=0]
     \draw[fill=white] (0.3,0.15)--(0.6,1.1)--(1.2,1.1)--(0.9,0.15)--(0.3,0.15)--cycle;
     \draw (.98,0.76) node[scale=.75, rotate=-12] {$4$};
     \draw (0.45,0.12) node[scale=.75, rotate=-12] {$4$};
     \end{tikzpicture}
     };
  \draw(.75,1.05) node{
    \begin{tikzpicture}[baseline=0]
     \draw[fill=white] (0.3,0.15)--(0.6,1.1)--(1.2,1.1)--(0.9,0.15)--(0.3,0.15)--cycle;
     \draw (.98,0.76) node[scale=.75, rotate=-12] {$7$};
     \draw (0.45,0.12) node[scale=.75, rotate=-12] {$7$};
     \end{tikzpicture}
     };
 \end{tikzpicture}
};
\draw(12.5,0) node[above] {
 \begin{tikzpicture}[baseline=0]
  \draw[line width=3, rounded corners] (0,0)--(0,6)--(1.5,6)--(1.5,0)--(0,0)--cycle;
  \draw[line width=3] (0,3)--(1.5,3);
  \draw(.75,3) node{
    \begin{tikzpicture}[baseline=0]
     \draw[fill=white] (0.3,0.15)--(0.6,1.1)--(1.2,1.1)--(0.9,0.15)--(0.3,0.15)--cycle;
     \draw (.98,0.76) node[scale=.75, rotate=-12] {$5$};
     \draw (0.45,0.12) node[scale=.75, rotate=-12] {$5$};
     \end{tikzpicture}
     };
  \draw(.75,3+.35) node{
    \begin{tikzpicture}[baseline=0]
     \draw[fill=white] (0.3,0.15)--(0.6,1.1)--(1.2,1.1)--(0.9,0.15)--(0.3,0.15)--cycle;
     \draw (.98,0.76) node[scale=.75, rotate=-12] {$2$};
     \draw (0.45,0.12) node[scale=.75, rotate=-12] {$2$};
     \end{tikzpicture}
     };
  \draw(.75,3+.7) node{
    \begin{tikzpicture}[baseline=0]
     \draw[fill=white] (0.3,0.15)--(0.6,1.1)--(1.2,1.1)--(0.9,0.15)--(0.3,0.15)--cycle;
     \draw (.98,0.76) node[scale=.75, rotate=-12] {$1$};
     \draw (0.45,0.12) node[scale=.75, rotate=-12] {$1$};
     \end{tikzpicture}
     };
  \draw(.75,3+1.05) node{
    \begin{tikzpicture}[baseline=0]
     \draw[fill=white] (0.3,0.15)--(0.6,1.1)--(1.2,1.1)--(0.9,0.15)--(0.3,0.15)--cycle;
     \draw (.98,0.76) node[scale=.75, rotate=-12] {$8$};
     \draw (0.45,0.12) node[scale=.75, rotate=-12] {$8$};
     \end{tikzpicture}
     };
  \draw(.75,3+1.4) node{
    \begin{tikzpicture}[baseline=0]
     \draw[fill=white!80!black] (0.3,0.15)--(0.6,1.1)--(1.2,1.1)--(0.9,0.15)--(0.3,0.15)--cycle;
     \draw (.98,0.76) node[scale=.75, rotate=-12] {$9$};
     \draw (0.45,0.12) node[scale=.75, rotate=-12] {$9$};
     \end{tikzpicture}
     };
  \draw(.75,0) node{
    \begin{tikzpicture}[baseline=0]
     \draw[fill=white] (0.3,0.15)--(0.6,1.1)--(1.2,1.1)--(0.9,0.15)--(0.3,0.15)--cycle;
     \draw (.98,0.76) node[scale=.75, rotate=-12] {$6$};
     \draw (0.45,0.12) node[scale=.75, rotate=-12] {$6$};
     \end{tikzpicture}
     };
  \draw(.75,.35) node{
    \begin{tikzpicture}[baseline=0]
     \draw[fill=white] (0.3,0.15)--(0.6,1.1)--(1.2,1.1)--(0.9,0.15)--(0.3,0.15)--cycle;
     \draw (.98,0.76) node[scale=.75, rotate=-12] {$3$};
     \draw (0.45,0.12) node[scale=.75, rotate=-12] {$3$};
     \end{tikzpicture}
     };
  \draw(.75,.7) node{
    \begin{tikzpicture}[baseline=0]
     \draw[fill=white] (0.3,0.15)--(0.6,1.1)--(1.2,1.1)--(0.9,0.15)--(0.3,0.15)--cycle;
     \draw (.98,0.76) node[scale=.75, rotate=-12] {$4$};
     \draw (0.45,0.12) node[scale=.75, rotate=-12] {$4$};
     \end{tikzpicture}
     };
  \draw(.75,1.05) node{
    \begin{tikzpicture}[baseline=0]
     \draw[fill=white] (0.3,0.15)--(0.6,1.1)--(1.2,1.1)--(0.9,0.15)--(0.3,0.15)--cycle;
     \draw (.98,0.76) node[scale=.75, rotate=-12] {$7$};
     \draw (0.45,0.12) node[scale=.75, rotate=-12] {$7$};
     \end{tikzpicture}
     };
 \end{tikzpicture}
};
\end{tikzpicture}
\bigskip
\caption{A standard shuffle of $n=9$ cards into $m=2$ shelves resulting in the permutation $\pi = 981257436$.}\label{tab:shelf}
\end{table}

\item \textbf{Lazy mode.} This mode is the same as standard mode except that there is a ``Shelf 0'' into which cards can only be placed at the bottom of the shelf. Thus, for each card $i$, we choose either the lazy shelf or one of the $2m$ ordinary (shelf, side) pairs, each with probability $1/(2m+1)$. Each outcome of the machine occurs with probability $1/(2m+1)^n$. For example, with $m=2$ piles, and $n=9$ cards, we might shuffle cards as shown in Table \ref{tab:lazyshelf}. We identify this shuffle with the permutation $\pi = 489125736$. As in prior cases, the same permutation can result from a variety of different outcomes.

\begin{table}
\begin{tikzpicture}
\draw(-.2,-.75) node[left] {Card:};
\draw(-.2,-1.25) node[left] {Shelf:};
\draw(-.2,6.26) node[left] {Shelf 0:};
\draw(-.2,3.75) node[left] {Shelf 1:};
\draw(-.2,1.25) node[left] {Shelf 2:};
\draw (.5,-1.25) node {1t};
\draw (.5,-.75) node {$1$};
\draw (2,-1.25) node {1b};
\draw (2,-.75) node {$2$};
\draw (3.5,-1.25) node {2t};
\draw (3.5,-.75) node {$3$};
\draw (5,-1.25) node {$0$};
\draw (5,-.75) node {$4$};
\draw (6.5,-1.25) node {1b};
\draw (6.5,-.75) node {$5$};
\draw (8,-1.25) node {2b};
\draw (8,-.75) node {$6$};
\draw (9.5,-1.25) node {2t};
\draw (9.5,-.75) node {$7$};
\draw (11,-1.25) node {$0$};
\draw (11,-.75) node {$8$};
\draw (12.5,-1.25) node {1t};
\draw (12.5,-.75) node {$9$};
\draw [line width=4] (-.5,-.25)--(13.5,-.25);
\draw(.5,0) node[above] {
 \begin{tikzpicture}[baseline=0]
  \draw[line width=3, rounded corners] (0,0)--(0,7.5)--(1.5,7.5)--(1.5,0)--(0,0)--cycle;
  \draw[line width=3] (0,2.5)--(1.5,2.5);
  \draw[line width=3] (0,5)--(1.5,5);
  \draw(.75,2.5) node{
    \begin{tikzpicture}[baseline=0]
     \draw[fill=white!80!black] (0.3,0.15)--(0.6,1.1)--(1.2,1.1)--(0.9,0.15)--(0.3,0.15)--cycle;
     \draw (.98,0.76) node[scale=.75, rotate=-12] {$1$};
     \draw (0.45,0.12) node[scale=.75, rotate=-12] {$1$};
     \end{tikzpicture}
     };
 \end{tikzpicture}
};
\draw(2,0) node[above] {
 \begin{tikzpicture}[baseline=0]
  \draw[line width=3, rounded corners] (0,0)--(0,7.5)--(1.5,7.5)--(1.5,0)--(0,0)--cycle;
  \draw[line width=3] (0,2.5)--(1.5,2.5);
  \draw[line width=3] (0,5)--(1.5,5);
  \draw(.75,2.5) node{
    \begin{tikzpicture}[baseline=0]
     \draw[fill=white!80!black] (0.3,0.15)--(0.6,1.1)--(1.2,1.1)--(0.9,0.15)--(0.3,0.15)--cycle;
     \draw (.98,0.76) node[scale=.75, rotate=-12] {$2$};
     \draw (0.45,0.12) node[scale=.75, rotate=-12] {$2$};
     \end{tikzpicture}
     };
  \draw(.75,2.5+.35) node{
    \begin{tikzpicture}[baseline=0]
     \draw[fill=white] (0.3,0.15)--(0.6,1.1)--(1.2,1.1)--(0.9,0.15)--(0.3,0.15)--cycle;
     \draw (.98,0.76) node[scale=.75, rotate=-12] {$1$};
     \draw (0.45,0.12) node[scale=.75, rotate=-12] {$1$};
     \end{tikzpicture}
     };
 \end{tikzpicture}
};
\draw(3.5,0) node[above] {
 \begin{tikzpicture}[baseline=0]
  \draw[line width=3, rounded corners] (0,0)--(0,7.5)--(1.5,7.5)--(1.5,0)--(0,0)--cycle;
  \draw[line width=3] (0,2.5)--(1.5,2.5);
  \draw[line width=3] (0,5)--(1.5,5);
  \draw(.75,2.5) node{
    \begin{tikzpicture}[baseline=0]
     \draw[fill=white] (0.3,0.15)--(0.6,1.1)--(1.2,1.1)--(0.9,0.15)--(0.3,0.15)--cycle;
     \draw (.98,0.76) node[scale=.75, rotate=-12] {$2$};
     \draw (0.45,0.12) node[scale=.75, rotate=-12] {$2$};
     \end{tikzpicture}
     };
  \draw(.75,2.5+.35) node{
    \begin{tikzpicture}[baseline=0]
     \draw[fill=white] (0.3,0.15)--(0.6,1.1)--(1.2,1.1)--(0.9,0.15)--(0.3,0.15)--cycle;
     \draw (.98,0.76) node[scale=.75, rotate=-12] {$1$};
     \draw (0.45,0.12) node[scale=.75, rotate=-12] {$1$};
     \end{tikzpicture}
     };
  \draw(.75,0) node{
    \begin{tikzpicture}[baseline=0]
     \draw[fill=white!80!black] (0.3,0.15)--(0.6,1.1)--(1.2,1.1)--(0.9,0.15)--(0.3,0.15)--cycle;
     \draw (.98,0.76) node[scale=.75, rotate=-12] {$3$};
     \draw (0.45,0.12) node[scale=.75, rotate=-12] {$3$};
     \end{tikzpicture}
     };
 \end{tikzpicture}
};
\draw(5,0) node[above] {
 \begin{tikzpicture}[baseline=0]
  \draw[line width=3, rounded corners] (0,0)--(0,7.5)--(1.5,7.5)--(1.5,0)--(0,0)--cycle;
  \draw[line width=3] (0,2.5)--(1.5,2.5);
  \draw[line width=3] (0,5)--(1.5,5);
  \draw(.75,5) node{
    \begin{tikzpicture}[baseline=0]
     \draw[fill=white!80!black] (0.3,0.15)--(0.6,1.1)--(1.2,1.1)--(0.9,0.15)--(0.3,0.15)--cycle;
     \draw (.98,0.76) node[scale=.75, rotate=-12] {$4$};
     \draw (0.45,0.12) node[scale=.75, rotate=-12] {$4$};
     \end{tikzpicture}
     };
  \draw(.75,2.5) node{
    \begin{tikzpicture}[baseline=0]
     \draw[fill=white] (0.3,0.15)--(0.6,1.1)--(1.2,1.1)--(0.9,0.15)--(0.3,0.15)--cycle;
     \draw (.98,0.76) node[scale=.75, rotate=-12] {$2$};
     \draw (0.45,0.12) node[scale=.75, rotate=-12] {$2$};
     \end{tikzpicture}
     };
  \draw(.75,2.5+.35) node{
    \begin{tikzpicture}[baseline=0]
     \draw[fill=white] (0.3,0.15)--(0.6,1.1)--(1.2,1.1)--(0.9,0.15)--(0.3,0.15)--cycle;
     \draw (.98,0.76) node[scale=.75, rotate=-12] {$1$};
     \draw (0.45,0.12) node[scale=.75, rotate=-12] {$1$};
     \end{tikzpicture}
     };
  \draw(.75,0) node{
    \begin{tikzpicture}[baseline=0]
     \draw[fill=white] (0.3,0.15)--(0.6,1.1)--(1.2,1.1)--(0.9,0.15)--(0.3,0.15)--cycle;
     \draw (.98,0.76) node[scale=.75, rotate=-12] {$3$};
     \draw (0.45,0.12) node[scale=.75, rotate=-12] {$3$};
     \end{tikzpicture}
     };
 \end{tikzpicture}
};
\draw(6.5,0) node[above] {
 \begin{tikzpicture}[baseline=0]
  \draw[line width=3, rounded corners] (0,0)--(0,7.5)--(1.5,7.5)--(1.5,0)--(0,0)--cycle;
  \draw[line width=3] (0,2.5)--(1.5,2.5);
  \draw[line width=3] (0,5)--(1.5,5);
  \draw(.75,5) node{
    \begin{tikzpicture}[baseline=0]
     \draw[fill=white] (0.3,0.15)--(0.6,1.1)--(1.2,1.1)--(0.9,0.15)--(0.3,0.15)--cycle;
     \draw (.98,0.76) node[scale=.75, rotate=-12] {$4$};
     \draw (0.45,0.12) node[scale=.75, rotate=-12] {$4$};
     \end{tikzpicture}
     };
  \draw(.75,2.5) node{
    \begin{tikzpicture}[baseline=0]
     \draw[fill=white!80!black] (0.3,0.15)--(0.6,1.1)--(1.2,1.1)--(0.9,0.15)--(0.3,0.15)--cycle;
     \draw (.98,0.76) node[scale=.75, rotate=-12] {$5$};
     \draw (0.45,0.12) node[scale=.75, rotate=-12] {$5$};
     \end{tikzpicture}
     };
  \draw(.75,2.5+.35) node{
    \begin{tikzpicture}[baseline=0]
     \draw[fill=white] (0.3,0.15)--(0.6,1.1)--(1.2,1.1)--(0.9,0.15)--(0.3,0.15)--cycle;
     \draw (.98,0.76) node[scale=.75, rotate=-12] {$2$};
     \draw (0.45,0.12) node[scale=.75, rotate=-12] {$2$};
     \end{tikzpicture}
     };
  \draw(.75,2.5+.7) node{
    \begin{tikzpicture}[baseline=0]
     \draw[fill=white] (0.3,0.15)--(0.6,1.1)--(1.2,1.1)--(0.9,0.15)--(0.3,0.15)--cycle;
     \draw (.98,0.76) node[scale=.75, rotate=-12] {$1$};
     \draw (0.45,0.12) node[scale=.75, rotate=-12] {$1$};
     \end{tikzpicture}
     };
  \draw(.75,0) node{
    \begin{tikzpicture}[baseline=0]
     \draw[fill=white] (0.3,0.15)--(0.6,1.1)--(1.2,1.1)--(0.9,0.15)--(0.3,0.15)--cycle;
     \draw (.98,0.76) node[scale=.75, rotate=-12] {$3$};
     \draw (0.45,0.12) node[scale=.75, rotate=-12] {$3$};
     \end{tikzpicture}
     };
 \end{tikzpicture}
};
\draw(8,0) node[above] {
 \begin{tikzpicture}[baseline=0]
  \draw[line width=3, rounded corners] (0,0)--(0,7.5)--(1.5,7.5)--(1.5,0)--(0,0)--cycle;
  \draw[line width=3] (0,2.5)--(1.5,2.5);
  \draw[line width=3] (0,5)--(1.5,5);
  \draw(.75,5) node{
    \begin{tikzpicture}[baseline=0]
     \draw[fill=white] (0.3,0.15)--(0.6,1.1)--(1.2,1.1)--(0.9,0.15)--(0.3,0.15)--cycle;
     \draw (.98,0.76) node[scale=.75, rotate=-12] {$4$};
     \draw (0.45,0.12) node[scale=.75, rotate=-12] {$4$};
     \end{tikzpicture}
     };
  \draw(.75,2.5) node{
    \begin{tikzpicture}[baseline=0]
     \draw[fill=white] (0.3,0.15)--(0.6,1.1)--(1.2,1.1)--(0.9,0.15)--(0.3,0.15)--cycle;
     \draw (.98,0.76) node[scale=.75, rotate=-12] {$5$};
     \draw (0.45,0.12) node[scale=.75, rotate=-12] {$5$};
     \end{tikzpicture}
     };
  \draw(.75,2.5+.35) node{
    \begin{tikzpicture}[baseline=0]
     \draw[fill=white] (0.3,0.15)--(0.6,1.1)--(1.2,1.1)--(0.9,0.15)--(0.3,0.15)--cycle;
     \draw (.98,0.76) node[scale=.75, rotate=-12] {$2$};
     \draw (0.45,0.12) node[scale=.75, rotate=-12] {$2$};
     \end{tikzpicture}
     };
  \draw(.75,2.5+.7) node{
    \begin{tikzpicture}[baseline=0]
     \draw[fill=white] (0.3,0.15)--(0.6,1.1)--(1.2,1.1)--(0.9,0.15)--(0.3,0.15)--cycle;
     \draw (.98,0.76) node[scale=.75, rotate=-12] {$1$};
     \draw (0.45,0.12) node[scale=.75, rotate=-12] {$1$};
     \end{tikzpicture}
     };
  \draw(.75,0) node{
    \begin{tikzpicture}[baseline=0]
     \draw[fill=white!80!black] (0.3,0.15)--(0.6,1.1)--(1.2,1.1)--(0.9,0.15)--(0.3,0.15)--cycle;
     \draw (.98,0.76) node[scale=.75, rotate=-12] {$6$};
     \draw (0.45,0.12) node[scale=.75, rotate=-12] {$6$};
     \end{tikzpicture}
     };
  \draw(.75,.35) node{
    \begin{tikzpicture}[baseline=0]
     \draw[fill=white] (0.3,0.15)--(0.6,1.1)--(1.2,1.1)--(0.9,0.15)--(0.3,0.15)--cycle;
     \draw (.98,0.76) node[scale=.75, rotate=-12] {$3$};
     \draw (0.45,0.12) node[scale=.75, rotate=-12] {$3$};
     \end{tikzpicture}
     };
 \end{tikzpicture}
};
\draw(9.5,0) node[above] {
 \begin{tikzpicture}[baseline=0]
  \draw[line width=3, rounded corners] (0,0)--(0,7.5)--(1.5,7.5)--(1.5,0)--(0,0)--cycle;
  \draw[line width=3] (0,2.5)--(1.5,2.5);
  \draw[line width=3] (0,5)--(1.5,5);
  \draw(.75,5) node{
    \begin{tikzpicture}[baseline=0]
     \draw[fill=white] (0.3,0.15)--(0.6,1.1)--(1.2,1.1)--(0.9,0.15)--(0.3,0.15)--cycle;
     \draw (.98,0.76) node[scale=.75, rotate=-12] {$4$};
     \draw (0.45,0.12) node[scale=.75, rotate=-12] {$4$};
     \end{tikzpicture}
     };
  \draw(.75,2.5) node{
    \begin{tikzpicture}[baseline=0]
     \draw[fill=white] (0.3,0.15)--(0.6,1.1)--(1.2,1.1)--(0.9,0.15)--(0.3,0.15)--cycle;
     \draw (.98,0.76) node[scale=.75, rotate=-12] {$5$};
     \draw (0.45,0.12) node[scale=.75, rotate=-12] {$5$};
     \end{tikzpicture}
     };
  \draw(.75,2.5+.35) node{
    \begin{tikzpicture}[baseline=0]
     \draw[fill=white] (0.3,0.15)--(0.6,1.1)--(1.2,1.1)--(0.9,0.15)--(0.3,0.15)--cycle;
     \draw (.98,0.76) node[scale=.75, rotate=-12] {$2$};
     \draw (0.45,0.12) node[scale=.75, rotate=-12] {$2$};
     \end{tikzpicture}
     };
  \draw(.75,2.5+.7) node{
    \begin{tikzpicture}[baseline=0]
     \draw[fill=white] (0.3,0.15)--(0.6,1.1)--(1.2,1.1)--(0.9,0.15)--(0.3,0.15)--cycle;
     \draw (.98,0.76) node[scale=.75, rotate=-12] {$1$};
     \draw (0.45,0.12) node[scale=.75, rotate=-12] {$1$};
     \end{tikzpicture}
     };
  \draw(.75,0) node{
    \begin{tikzpicture}[baseline=0]
     \draw[fill=white] (0.3,0.15)--(0.6,1.1)--(1.2,1.1)--(0.9,0.15)--(0.3,0.15)--cycle;
     \draw (.98,0.76) node[scale=.75, rotate=-12] {$6$};
     \draw (0.45,0.12) node[scale=.75, rotate=-12] {$6$};
     \end{tikzpicture}
     };
  \draw(.75,.35) node{
    \begin{tikzpicture}[baseline=0]
     \draw[fill=white] (0.3,0.15)--(0.6,1.1)--(1.2,1.1)--(0.9,0.15)--(0.3,0.15)--cycle;
     \draw (.98,0.76) node[scale=.75, rotate=-12] {$3$};
     \draw (0.45,0.12) node[scale=.75, rotate=-12] {$3$};
     \end{tikzpicture}
     };
  \draw(.75,.7) node{
    \begin{tikzpicture}[baseline=0]
     \draw[fill=white!80!black] (0.3,0.15)--(0.6,1.1)--(1.2,1.1)--(0.9,0.15)--(0.3,0.15)--cycle;
     \draw (.98,0.76) node[scale=.75, rotate=-12] {$7$};
     \draw (0.45,0.12) node[scale=.75, rotate=-12] {$7$};
     \end{tikzpicture}
     };
 \end{tikzpicture}
};
\draw(11,0) node[above] {
 \begin{tikzpicture}[baseline=0]
  \draw[line width=3, rounded corners] (0,0)--(0,7.5)--(1.5,7.5)--(1.5,0)--(0,0)--cycle;
  \draw[line width=3] (0,2.5)--(1.5,2.5);
  \draw[line width=3] (0,5)--(1.5,5);
  \draw(.75,5) node{
    \begin{tikzpicture}[baseline=0]
     \draw[fill=white!80!black] (0.3,0.15)--(0.6,1.1)--(1.2,1.1)--(0.9,0.15)--(0.3,0.15)--cycle;
     \draw (.98,0.76) node[scale=.75, rotate=-12] {$8$};
     \draw (0.45,0.12) node[scale=.75, rotate=-12] {$8$};
     \end{tikzpicture}
     };
  \draw(.75,5+.35) node{
    \begin{tikzpicture}[baseline=0]
     \draw[fill=white] (0.3,0.15)--(0.6,1.1)--(1.2,1.1)--(0.9,0.15)--(0.3,0.15)--cycle;
     \draw (.98,0.76) node[scale=.75, rotate=-12] {$4$};
     \draw (0.45,0.12) node[scale=.75, rotate=-12] {$4$};
     \end{tikzpicture}
     };
  \draw(.75,2.5) node{
    \begin{tikzpicture}[baseline=0]
     \draw[fill=white] (0.3,0.15)--(0.6,1.1)--(1.2,1.1)--(0.9,0.15)--(0.3,0.15)--cycle;
     \draw (.98,0.76) node[scale=.75, rotate=-12] {$5$};
     \draw (0.45,0.12) node[scale=.75, rotate=-12] {$5$};
     \end{tikzpicture}
     };
  \draw(.75,2.5+.35) node{
    \begin{tikzpicture}[baseline=0]
     \draw[fill=white] (0.3,0.15)--(0.6,1.1)--(1.2,1.1)--(0.9,0.15)--(0.3,0.15)--cycle;
     \draw (.98,0.76) node[scale=.75, rotate=-12] {$2$};
     \draw (0.45,0.12) node[scale=.75, rotate=-12] {$2$};
     \end{tikzpicture}
     };
  \draw(.75,2.5+.7) node{
    \begin{tikzpicture}[baseline=0]
     \draw[fill=white] (0.3,0.15)--(0.6,1.1)--(1.2,1.1)--(0.9,0.15)--(0.3,0.15)--cycle;
     \draw (.98,0.76) node[scale=.75, rotate=-12] {$1$};
     \draw (0.45,0.12) node[scale=.75, rotate=-12] {$1$};
     \end{tikzpicture}
     };
  \draw(.75,0) node{
    \begin{tikzpicture}[baseline=0]
     \draw[fill=white] (0.3,0.15)--(0.6,1.1)--(1.2,1.1)--(0.9,0.15)--(0.3,0.15)--cycle;
     \draw (.98,0.76) node[scale=.75, rotate=-12] {$6$};
     \draw (0.45,0.12) node[scale=.75, rotate=-12] {$6$};
     \end{tikzpicture}
     };
  \draw(.75,.35) node{
    \begin{tikzpicture}[baseline=0]
     \draw[fill=white] (0.3,0.15)--(0.6,1.1)--(1.2,1.1)--(0.9,0.15)--(0.3,0.15)--cycle;
     \draw (.98,0.76) node[scale=.75, rotate=-12] {$3$};
     \draw (0.45,0.12) node[scale=.75, rotate=-12] {$3$};
     \end{tikzpicture}
     };
  \draw(.75,.7) node{
    \begin{tikzpicture}[baseline=0]
     \draw[fill=white] (0.3,0.15)--(0.6,1.1)--(1.2,1.1)--(0.9,0.15)--(0.3,0.15)--cycle;
     \draw (.98,0.76) node[scale=.75, rotate=-12] {$7$};
     \draw (0.45,0.12) node[scale=.75, rotate=-12] {$7$};
     \end{tikzpicture}
     };
 \end{tikzpicture}
};
\draw(12.5,0) node[above] {
 \begin{tikzpicture}[baseline=0]
  \draw[line width=3, rounded corners] (0,0)--(0,7.5)--(1.5,7.5)--(1.5,0)--(0,0)--cycle;
  \draw[line width=3] (0,2.5)--(1.5,2.5);
  \draw[line width=3] (0,5)--(1.5,5);
  \draw(.75,5) node{
    \begin{tikzpicture}[baseline=0]
     \draw[fill=white] (0.3,0.15)--(0.6,1.1)--(1.2,1.1)--(0.9,0.15)--(0.3,0.15)--cycle;
     \draw (.98,0.76) node[scale=.75, rotate=-12] {$8$};
     \draw (0.45,0.12) node[scale=.75, rotate=-12] {$8$};
     \end{tikzpicture}
     };
  \draw(.75,5+.35) node{
    \begin{tikzpicture}[baseline=0]
     \draw[fill=white] (0.3,0.15)--(0.6,1.1)--(1.2,1.1)--(0.9,0.15)--(0.3,0.15)--cycle;
     \draw (.98,0.76) node[scale=.75, rotate=-12] {$4$};
     \draw (0.45,0.12) node[scale=.75, rotate=-12] {$4$};
     \end{tikzpicture}
     };
  \draw(.75,2.5) node{
    \begin{tikzpicture}[baseline=0]
     \draw[fill=white] (0.3,0.15)--(0.6,1.1)--(1.2,1.1)--(0.9,0.15)--(0.3,0.15)--cycle;
     \draw (.98,0.76) node[scale=.75, rotate=-12] {$5$};
     \draw (0.45,0.12) node[scale=.75, rotate=-12] {$5$};
     \end{tikzpicture}
     };
  \draw(.75,2.5+.35) node{
    \begin{tikzpicture}[baseline=0]
     \draw[fill=white] (0.3,0.15)--(0.6,1.1)--(1.2,1.1)--(0.9,0.15)--(0.3,0.15)--cycle;
     \draw (.98,0.76) node[scale=.75, rotate=-12] {$2$};
     \draw (0.45,0.12) node[scale=.75, rotate=-12] {$2$};
     \end{tikzpicture}
     };
  \draw(.75,2.5+.7) node{
    \begin{tikzpicture}[baseline=0]
     \draw[fill=white] (0.3,0.15)--(0.6,1.1)--(1.2,1.1)--(0.9,0.15)--(0.3,0.15)--cycle;
     \draw (.98,0.76) node[scale=.75, rotate=-12] {$1$};
     \draw (0.45,0.12) node[scale=.75, rotate=-12] {$1$};
     \end{tikzpicture}
     };
  \draw(.75,2.5+1.05) node{
    \begin{tikzpicture}[baseline=0]
     \draw[fill=white!80!black] (0.3,0.15)--(0.6,1.1)--(1.2,1.1)--(0.9,0.15)--(0.3,0.15)--cycle;
     \draw (.98,0.76) node[scale=.75, rotate=-12] {$9$};
     \draw (0.45,0.12) node[scale=.75, rotate=-12] {$9$};
     \end{tikzpicture}
     };
  \draw(.75,0) node{
    \begin{tikzpicture}[baseline=0]
     \draw[fill=white] (0.3,0.15)--(0.6,1.1)--(1.2,1.1)--(0.9,0.15)--(0.3,0.15)--cycle;
     \draw (.98,0.76) node[scale=.75, rotate=-12] {$6$};
     \draw (0.45,0.12) node[scale=.75, rotate=-12] {$6$};
     \end{tikzpicture}
     };
  \draw(.75,.35) node{
    \begin{tikzpicture}[baseline=0]
     \draw[fill=white] (0.3,0.15)--(0.6,1.1)--(1.2,1.1)--(0.9,0.15)--(0.3,0.15)--cycle;
     \draw (.98,0.76) node[scale=.75, rotate=-12] {$3$};
     \draw (0.45,0.12) node[scale=.75, rotate=-12] {$3$};
     \end{tikzpicture}
     };
  \draw(.75,.7) node{
    \begin{tikzpicture}[baseline=0]
     \draw[fill=white] (0.3,0.15)--(0.6,1.1)--(1.2,1.1)--(0.9,0.15)--(0.3,0.15)--cycle;
     \draw (.98,0.76) node[scale=.75, rotate=-12] {$7$};
     \draw (0.45,0.12) node[scale=.75, rotate=-12] {$7$};
     \end{tikzpicture}
     };
 \end{tikzpicture}
};
\end{tikzpicture}
\bigskip
\caption{A lazy shuffle of $n=9$ cards into $m=2$ shelves resulting in the permutation $\pi = 489125736$.}\label{tab:lazyshelf}
\end{table}

\end{itemize}

\subsection{Riffle shuffling}

Motivated by \cite[Section 3.1]{DFH}, we will see that our 3 models of shelf shuffling are equivalent to inverse riffle shuffling for 3 models of riffle shuffling. (That is, the probability of a permutation $\pi$ under the shelf shuffling distribution will have the same probability as $\pi^{-1}$ in the corresponding riffle shuffling distribution.) We now describe the three types of riffle shuffling that correspond to our shelf shuffling machine. In the descriptions below, we use the notation $A=(a_1,\ldots,a_m)$ for a \emph{weak composition} of $n$, with $a_i\geq 0$ and $\sum a_i =n$, and we write the multinomial coefficient as
\[
\binom{n}{A} = \binom{n}{a_1,\ldots,a_m} = \frac{n!}{a_1!\cdots a_m!}.
\]

\begin{itemize}
\item \textbf{Riffle shuffle}. This is the classic Gilbert-Shannon-Reeds model of card shuffling, as analyzed in \cite{BD}. Cut the deck into $m$ piles according to the multinomial distribution. The piles have sizes $A=(a_1,\ldots,a_m)$ with probability $\binom{n}{A}/m^n$. To be clear, the first pile contains cards $1,\ldots,a_1$, the second contains cards $a_1+1,\ldots,a_1+a_2$, and so on.

We then ``riffle'' the cards by dropping a card from the bottom of pile $i$ with probability proportional to the size of the pile, until all piles are empty. A straightforward computation shows that this gives the uniform distribution on all $\binom{n}{A}$ interleavings of the piles.  Call $\pi$ the permutation of the cards that results.

We remark that a fixed permutation $\pi$ can result from many different weak compositions. However given a fixed weak composition $A$, there is at most one interleaving of the piles indexed by $A$ that gives $\pi$. If we keep track of the initial pile sizes as well as $\pi$, we see the pair $(A,\pi)$ occurs with probability
\[
\frac{\binom{n}{A}}{m^n}\cdot \frac{1}{\binom{n}{A}} = \frac{1}{m^n}.
\]

\item \textbf{Down-up riffle shuffle}.
Cut the deck into $2m$ piles according to the multinomial distribution. The piles have sizes $A=(b_1, a_1,\ldots, b_m, a_m)$ with probability $\binom{n}{A}/(2m)^n$.
This time we put every other pile \emph{in reverse order}, beginning with the first pile. This indicates the first pile has cards $b_1, b_1-1,\ldots,1$, the second pile has cards $b_1+1,\ldots,b_1+a_1$, the third pile has cards $b_1+a_1+b_2, b_1+a_1+b_2-1,\ldots, b_1+a_1+1$, and so on.

We now riffle the cards as before to give the uniform distribution on all $\binom{n}{A}$ interleavings of the piles.  Call $\pi$ the permutation of the cards that results. If we keep track of the initial pile sizes as well as $\pi$, we see the pair $(A,\pi)$ occurs with probability $1/(2m)^n$.

We note that down-up riffle shuffles were studied in \cite{BD} for $m=1$ and for general $m$ in
\cite{F1}.

\item \textbf{Up-down riffle shuffle}.
This method modifies the down-up riffle shuffle only slightly. First, cut the deck into $2m+1$ piles according to the multinomial distribution. The piles have sizes $A=(a_0,b_1, a_1,\ldots, b_m, a_m)$ with probability $\binom{n}{A}/(2m+1)^n$.
Every other pile is in reverse order, beginning with the second pile. This gives the first pile as cards $1,2,\ldots,a_0$, the second pile as $a_0+b_1, a_0+b_1-1,\ldots,a_0+1$, the third pile as cards $a_0+b_1+1, \ldots, a_0+b_1+a_1$, and so on.

Again we riffle the cards to give the uniform distribution on all $\binom{n}{A}$ interleavings of the piles.  Call $\pi$ the permutation of the cards that results. If we keep track of the initial pile sizes as well as $\pi$, we see the pair $(A,\pi)$ occurs with probability $1/(2m+1)^n$.

We note that up-down riffle shuffles were studied in \cite{BB} for $m=1$ and for general $m$ in
\cite{F1}.
\end{itemize}

\begin{rem}[``Outcomes'']
In the descriptions of all shelf shuffling and riffle shuffling varieties described above, we have used the word ``outcome'' rather loosely. In the shelf shuffling examples, an ``outcome'' refers to the sequence of card placements, not only the permutation of the cards at the end of the sequence. In the case of the riffle shuffles, an ``outcome'' refers to the weak composition-permutation pair $(A,\pi)$. We point out that the outcomes of shelf shuffling can also be encoded with weak composition-permutation pairs, by recording the number of cards placed on the top and bottom of each shelf throughout the sequence of events. For example, the sequence for the lazy shuffle shown in Table \ref{tab:lazyshelf} is (1t, 1b, 2t, 0, 1b, 2b, 2t, 0, 1t). There are two occurrences of ``$0$'' in the sequence, two occurrences of ``1t,'' two occurrences of ``1b,'' two occurrences of ``2t,'' and one occurrence of ``2b.'' We can encode this information in the weak composition $(2, 2, 2, 2, 1)$, and together with the permutation $\pi=489125736$, we have all the information we need to recover the sequence of card placements. (Indeed, we easily deduce which cards ended up on which shelf, and the card labels tell us which card entered a particular shelf most recently.) From this point forward, when we use the word ``outcome'' in reference to a shuffle, it is best to think of the weak composition-permutation pair.
\end{rem}

\subsection{$P$-partitions}

Now we turn our attention to $P$-partitions, but first we discuss background for partially ordered sets. See \cite[Chapter 4]{EC1} for more.

Throughout, we fix a positive integer $n$ and let $P$ denote a partial ordering of the set $[n]=\{1,2,\ldots,n\}$. We write ``$<_P$'' for the order relation on $P$, i.e., if $i$ is below $j$ in $P$, we say $i$ and $j$ are \emph{comparable} and write $i <_P j$ or $j >_P i$. If neither $i<_P j$ nor $i >_P j$, we say $i$ and $j$ are \emph{incomparable}. A comparable pair $i<_P j$ is \emph{naturally labeled} if $i <_{\NN} j$ as well. Otherwise, the pair is \emph{unnaturally labeled}.

A \emph{chain} is a poset in which any two elements are comparable. The $n$ element \emph{antichain}, denoted $[n]$, is the poset with no relations. We readily identify chains with permutations, via
\[
 \pi(1) <_{\pi} \pi(2) <_{\pi} \cdots <_{\pi} \pi(n),
\]
whenever $P=\pi$ is a chain. To say that $i <_{\pi} j$ is equivalent to saying that $\pi^{-1}(i) <_{\NN} \pi^{-1}(j)$.

We say that $Q$ \emph{refines} $P$ if every relation in $P$ is a relation in $Q$. That is, $i <_P j$ implies $i <_Q j$. In this setting, chains are maximally refined posets. We define the set of \emph{linear extensions} of $P$ to be the set of chains (permutations) $\pi$ such that $\pi$ refines $P$:
\[
 \LL(P) = \{ \pi \in S_n : i <_P j \Rightarrow  i <_{\pi} j \}.
\]

In the definition of $P$-partitions, the set of integers $\ZZ = \{ \ldots, \bar 2, \bar 1, 0, 1, 2, \ldots \}$ is given the ordering
\[
 0 <_{\ZZ} \bar 1 <_{\ZZ} 1 <_{\ZZ} \bar 2 <_{\ZZ} 2 < \cdots,
\]
where we write $\bar i$ instead of $-i$ to help avoid confusion with respect to the usual integer ordering. We define symbols ``$\lp$'' and ``$\len$'' as follows:
\begin{align*}
a \lp b &\Longleftrightarrow a <_{\ZZ} b \mbox{ or } a=b \in \{0, 1, 2, 3, \ldots \},\\
a \len b &\Longleftrightarrow a <_{\ZZ} b \mbox{ or } a=b \in \{\bar 1, \bar 2, \bar 3, \ldots \}.
\end{align*}

\begin{defn}[$P$-partition]
A \emph{$P$-partition} is an order preserving function $f: P \to \ZZ$ such that for $i <_P j$:
\begin{itemize}
\item $f(i) \lp f(j)$ if $i <_{\NN} j$,
\item $f(i) \len f(j)$ if $i >_{\NN} j$.
\end{itemize}
\end{defn}

In other words, the values of a $P$-partition on a naturally labeled pair are only allowed to agree on nonbarred values, while the values on an unnaturally labeled pair can only agree on barred values. We denote the set of $P$-partitions by $\A(P)$.

We consider three subsets of $P$-partitions, characterized by restrictions on the image of $f$.
\begin{itemize}
\item \textbf{Positive $P$-partitions.} A $P$-partition $f$ whose image is in $\NN$ is equivalent to the order-preserving version of Stanley's original definition of a $P$-partition \cite[Chapter 4]{EC1}, i.e., $i<_P j$ implies $f(i)\leq_{\NN} f(j)$ with $f(i)<f(j)$ if $i >_{\NN} j$ is an unnaturally labeled pair. We denote this subset of $P$-partitions as follows:
\[
 \A^+(P) = \{ f \in \A(P) : f(i) \in \NN \mbox{ for all } i \in P \}.
\]
This set will help to encode strict shelf shuffling and classic riffle shuffling.

\item \textbf{Nonzero $P$-partitions.} The $P$-partitions whose image does not contain $0$ are precisely Stembridge's enriched $P$-partitions \cite{Stem}. We denote this set as:
\[
 \A^*(P) = \{ f \in \A(P) : f(i) \neq 0 \mbox{ for all } i \in P \}.
\]
This set will help to encode standard shelf shuffling and down-up riffle shuffling.

\item \textbf{All $P$-partitions.} Without any restrictions, this is precisely the definition of left enriched $P$-partitions given by Petersen \cite{Pet}. This set will help to encode lazy shelf shuffling and up-down riffle shuffling.
\end{itemize}

For example, if $n=3$ and $P$ is the poset with $1 <_P 2$ and $3 <_P 2$, it has linear extensions $\LL(P)=\{132, 312\}$. We can draw the poset and its extensions with \emph{Hasse diagrams} as indicated here:
\[
 \begin{tikzpicture}[baseline=0]
  \draw (0,1) node {$P$:};
  \draw (1,1) node[fill=white,inner sep=1] {$1$}-- (2,2) node[fill=white,inner sep=1] {$2$} -- (3,1) node[fill=white,inner sep=1] {$3$};
 \end{tikzpicture}
 \qquad
 \begin{tikzpicture}
  \draw (0,1) node {$\LL(P)$:};
  \draw (1,0) node[fill=white,inner sep=1] {$1$}-- (1,1) node[fill=white,inner sep=1] {$3$} -- (1,2) node[fill=white,inner sep=1] {$2$};
  \draw (2,0) node[fill=white,inner sep=1] {$3$}-- (2,1) node[fill=white,inner sep=1] {$1$} -- (2,2) node[fill=white,inner sep=1] {$2$};
 \end{tikzpicture}
\]
In this case, every $P$-partition $f$ must satisfy
\[
 f(1)\lp f(2) \gn f(3),
\]
so
\[
 \A(P) = \{ (a_1, a_2, a_3)\in  \ZZ^3 : a_1 \lp a_2 \gn a_3 \}.
\]
We can write this as a disjoint union:
\[
 \A(P) = \{ a_1 \lp a_3 \len a_2 \} \cup \{ a_3 \len a_1 \lp a_2 \}.
\]
But each of these smaller sets can viewed as the $P$-partitions for a chain:
\[
 \A(P) = \A(132) \cup \A(312).
\]

By induction on the number of incomparable pairs in a general poset $P$, we can see that, in general, the set of all $P$-partitions is the disjoint union of the $P$-partitions for its chains.

\begin{thm}[\cite{EC1}, Lemma 4.5.3]\label{thm:ftpp}
The set of $P$-partitions is the disjoint union of the $\pi$-partitions of its linear extensions:
\[
 \A(P) = \bigcup_{\pi \in \LL(P)} \A(\pi).
\]
\end{thm}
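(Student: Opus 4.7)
The plan is to proceed by induction on the number of incomparable pairs in $P$. If $P$ has no incomparable pairs, then $P$ is itself a chain, equal to some permutation $\pi$, so $\LL(P) = \{\pi\}$ and the statement is immediate.

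For the inductive step, fix any incomparable pair $i, j$ in $P$ with $i <_\NN j$. Let $P_1$ denote the transitive closure of $P$ with the added relation $i <_{P_1} j$, and let $P_2$ denote the transitive closure of $P$ with $j <_{P_2} i$. Both $P_1$ and $P_2$ have strictly fewer incomparable pairs than $P$, so the inductive hypothesis applies to each. The argument then reduces to establishing the disjoint decompositions
\[
 \A(P) = \A(P_1) \cup \A(P_2), \qquad \LL(P) = \LL(P_1) \cup \LL(P_2);
\]
combining these with the inductive hypothesis applied to $P_1$ and $P_2$ yields the theorem.

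The decomposition of $\LL(P)$ is immediate, since any linear extension orders $i$ and $j$ in exactly one way. For $\A(P)$, the key observation is that the relations $\lp$ and $\len$ are defined precisely so that the pair $(f(i), f(j)) \in \ZZ^2$ satisfies exactly one of $f(i) \lp f(j)$ or $f(j) \len f(i)$: trichotomy in $\ZZ$ handles the case $f(i) \neq f(j)$, while if $f(i) = f(j)$ the common value is either nonbarred (in which case $\lp$ holds but $\len$ does not) or barred (vice versa). The first alternative places $f$ in $\A(P_1)$, since $i <_\NN j$ makes $i <_{P_1} j$ a naturally labeled pair whose condition is $f(i) \lp f(j)$; the second places $f$ in $\A(P_2)$, since $j <_{P_2} i$ is unnaturally labeled with condition $f(j) \len f(i)$.

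The main subtle point is to verify that if $f \in \A(P)$ satisfies the new condition, then $f$ satisfies the $P_k$-partition conditions on \emph{every} comparability introduced by transitive closure, not merely on the newly added edge. A new relation $u <_{P_k} v$ must route through the inserted edge, so its candidate condition is implied by composing the $P$-condition on $(u, i)$, the new $\lp$ or $\len$ condition on $(i, j)$, and the $P$-condition on $(j, v)$. A case analysis confirms that the chained $\lp$ and $\len$ relations compose correctly: any strict inequality among the intermediate values forces $f(u) <_\ZZ f(v)$, satisfying both $\lp$ and $\len$, while if the chain collapses to equality throughout, the forced nonbarred-versus-barred constraints on the common value are automatically compatible with the relation on $(u, v)$ required by its natural or unnatural labeling. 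With this verified, the induction closes.
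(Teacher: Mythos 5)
Your proposal is correct and follows essentially the same route as the paper, which sketches this result (citing Stanley) via exactly this induction on the number of incomparable pairs, illustrated on a small example before asserting the general case. The one step you rightly flag as subtle—verifying the transitively-closed relations in $P_1$ and $P_2$—is indeed the content that the paper leaves implicit, and your case analysis handles it correctly.
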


The antichain $P=[n]$ is a special case worth considering here, since every $P$-partition of $n$ elements is a $P$-partition for the antichain. Moreover since
\[
 \A([n]) = \bigcup_{\pi \in S_n} \A(\pi),
\]
this implies that each $P$-partition $f$ belongs to just one subset $\A(\pi)$. This determines a unique permutation $\pi=\pi(f)$, which we call the \emph{sorting permutation} for $f$.\footnote{Stanley says  $f$ is \emph{$\pi$-compatible} in this situation \cite[Section 7.19]{EC2}. This perspective emphasizes $f$ relative to $\pi$, whereas our terminology emphasizes $\pi$ relative to $f$.}

\begin{defn}[Sorting permutation of a $P$-partition]\label{def:permf}
Each $P$-partition $f$ determines a unique permutation $\pi=\pi(f)$. We define $\pi$ from $f$ via sorting the pairs $(i,f(i))$ according to:
\begin{itemize}
 \item if $f(i) < f(j)$, then $i <_{\pi} j$,
 \item if $i<_{\NN} j$ and $f(i)=f(j) \in \{0,1,2,\ldots\}$, then $i <_{\pi} j$,
 \item if $i<_{\NN} j$ and $f(i)=f(j) \in \{\bar 1,\bar 2,\bar 3,\ldots\}$, then $j <_{\pi} i$.
\end{itemize}
\end{defn}

We remark that this definition for a sorting permutation makes sense for nonzero and positive $P$-partitions as well.

For example, suppose $n=9$ and we have the following $P$-partition $f$, written in two-line notation with $f(i)$ below $i$:
\begin{equation}\label{eq:fex}
 f = \left( \begin{array}{rrrrrrrrr}
       1 & 2 & 3 & 4 & 5 & 6 & 7 & 8 & 9 \\
       \bar 1 & 0 & 0 & \bar 2 & \bar 1 & 1 & 0 & 2 & 2
       \end{array} \right).
\end{equation}
The image multiset of $f$ is $\{0,0,0,\bar 1,\bar 1,1,\bar 2,2,2\}$, which we can denote $\{0^3, \bar 1^2, 1^1, \bar 2^1, 2^2\}$ for brevity. Since $f(2)=f(3)=f(7)=0$, we know $2$, $3$, and $7$ must be the first three entries of $\pi$ and since the image is 0, these must be in their natural order. Thus, $\pi(1)\pi(2)\pi(3) = 237$. Similarly, since $f(1)=f(5)=\bar 1$ is the next biggest value of $f$, we know the next two entries of $\pi$ are $1$ and $5$. Since the image of these two elements is $\bar 1$, they must appear in reverse order, i.e., $\pi(4)\pi(5)=51$. Continuing in this way, we can deduce the values of each entry of $\pi=\pi(f) = 237516489$.

Thinking in terms of the two-line notation itself, we simply sort the array from left to right according to the bottom row. When there are ties, we sort in increasing order on the top row for $f(i)$ nonnegative, and in decreasing order for $f(i)$ barred:
\[
 f\circ \pi = \left( \begin{array}{rrrrrrrrr}
       2 & 3 & 7 & 5 & 1 & 6 & 4 & 8 & 9 \\
       0 & 0 & 0 & \bar1 & \bar1 & 1 & \bar2 & 2 & 2
       \end{array} \right).
\]
We see $\pi(f) = 237516489$ in the top line of the sorted array.

\subsection{$P$-partitions encode shuffles}

The connection between $P$-partitions and shelf shufflers comes from bounding $P$-partitions. That is, let $\A(P;m)$ denote the set of $P$-partitions with absolute value bounded by $m$, i.e.,
\[
 \A(P;m) =\{ f \in \A(P): |f(i)| \leq m \}.
\] Here we use the notation that $|\bar j|=j$.

When $P=[n]$ is an antichain, we see each $f \in \A([n];m)$ is merely a record of exactly what the shelf shuffler did with each card while in lazy mode. The following proposition should be roughly self-evident.

\begin{prop}[Shelf shuffling and $P$-partitions]\label{prp:equiv}
There is a bijection between $\A([n];m)$ and the set of outcomes of an $m$-shelf shuffler in lazy mode. Namely, we place cards one at a time, from $i=1, 2,\ldots, n$. We place card $i$ on shelf $|f(i)|$. If $f(i)$ is barred, the card is placed on top of the cards on the shelf, while if $f(i)$ is unbarred, the card is placed below the cards already on the shelf. The ordering of the cards after shuffling is given by $\pi(f)$.
\end{prop}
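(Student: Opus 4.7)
The plan is to verify the bijection and the permutation identity separately, both by reducing everything to a careful bookkeeping of what happens shelf-by-shelf.

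First I would count. Since $P = [n]$ is an antichain there are no order-preservation constraints, so $\A([n];m)$ is simply the set of functions $f \colon [n] \to \{\bar m, \ldots, \bar 1, 0, 1, \ldots, m\}$, which has cardinality $(2m+1)^n$. On the shuffler side, each card independently gets one of $2m+1$ labels: Shelf 0, or a (shelf, side) pair with shelf in $\{1,\ldots,m\}$ and side in $\{\text{top}, \text{bottom}\}$. So there are $(2m+1)^n$ outcomes. The stated map $f \mapsto (\text{place card } i \text{ on shelf } |f(i)|, \text{top iff } f(i) \text{ is barred})$ matches labels to labels and is trivially a bijection.

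The substance of the proof is showing that the permutation produced by the shuffle is indeed $\pi(f)$ from Definition \ref{def:permf}. I would analyze shelf $k$ at the end of the shuffling process. On shelf $k \geq 1$, the barred cards (those with $f(i) = \bar k$) sit above the unbarred ones (those with $f(i) = k$), because every barred placement on shelf $k$ goes above every unbarred placement. Among the unbarred cards $i_1 < i_2 < \cdots < i_p$, each is inserted below those already present, so reading top to bottom they appear in their natural order $i_1, \ldots, i_p$. Among the barred cards $j_1 < j_2 < \cdots < j_q$, each is inserted above those already present, so reading top to bottom they appear in reverse order $j_q, \ldots, j_1$. For shelf 0, all cards are placed at the bottom, so they appear in increasing order of $i$ from top to bottom.

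Now I would read the deck from the top of the box to the bottom, which means shelf $0$ first, then shelf $1$ (barred block, then unbarred block), then shelf $2$, and so on. This produces exactly the list obtained by sorting the pairs $(i, f(i))$ first by $f(i)$ in the order $0 <_\ZZ \bar 1 <_\ZZ 1 <_\ZZ \bar 2 <_\ZZ 2 <_\ZZ \cdots$, breaking ties in increasing order of $i$ for nonnegative values of $f(i)$ and in decreasing order of $i$ for barred values of $f(i)$. This is precisely the definition of $\pi(f)$, completing the proof.

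The argument is essentially bookkeeping; the only place one can go wrong is in the direction conventions (top vs.\ bottom of a shelf, top vs.\ bottom of the deck, and the order $0 <_\ZZ \bar 1 <_\ZZ 1 <_\ZZ \cdots$). The main obstacle is therefore being disciplined enough to get all the conventions aligned so that insertion-at-the-top gives a reverse-order block while insertion-at-the-bottom gives a natural-order block, and that these blocks stack in the order dictated by the ordering on $\ZZ$ when one reads from shelf 0 downward.
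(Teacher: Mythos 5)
Your proof is correct, and it supplies exactly the details the paper waves away. The paper doesn't actually prove Proposition \ref{prp:equiv}: it remarks that $f \in \A([n];m)$ is ``merely a record of exactly what the shelf shuffler did with each card while in lazy mode'' and declares the proposition ``roughly self-evident,'' then illustrates with the worked example $f$ from \eqref{eq:fex} and its sorting permutation $\pi(f) = 237516489$. Your write-up is the natural formalization of that claim: count both sides to get $(2m+1)^n$ and observe the labeling bijection, then track each shelf to see that barred cards accumulate in descending order above, unbarred cards accumulate in ascending order below, and shelf $0$ holds an ascending run, so that reading the box top-to-bottom reproduces the sort underlying Definition \ref{def:permf} with respect to $0 <_{\ZZ} \bar 1 <_{\ZZ} 1 <_{\ZZ} \bar 2 <_{\ZZ} 2 <_{\ZZ} \cdots$. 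Your bookkeeping is consistent with the paper's example, and you correctly flag the one real hazard, namely keeping the top/bottom conventions aligned with the tie-breaking rules in Definition \ref{def:permf}. Nothing is missing.
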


For example, the $P$-partition in \eqref{eq:fex} corresponds to the shelf shuffler taking card 1 and placing it on top of shelf 1, card 2 on the bottom of shelf 0, card 3 on the bottom of shelf 0, and so on.

This correspondence between shuffles and $P$-partitions restricts to other modes in the obvious way. If we want to encode the strict shuffle mode, we use the set of positive $P$-partitions for the antichain,
\[
 \A^+([n];m) =\{ f \in \A^+([n]): |f(i)| \leq m \},
\]
while if we want to encode the standard shuffle mode, we use nonzero $P$-partitions,
\[
 \A^*([n];m) =\{ f \in \A^*([n]): |f(i)| \leq m \}.
\]

\begin{rem}\label{rem:topvbot}
The shelf shufflers in this paper drop cards on shelves one at a time from the top of the deck, whereas in \cite{DFH}, the cards are dealt into shelves one at a time from the bottom of the deck. The correspondence between $P$-partitions and shelf shuffling in Proposition \ref{prp:equiv} can be modified to match the shuffling mechanism from \cite{DFH} as follows. Dealing from the bottom means that we place cards $i=n,\ldots,2,1$.

The $P$-partition example from \eqref{eq:fex},
\[
 f = \left( \begin{array}{rrrrrrrrr}
       1 & 2 & 3 & 4 & 5 & 6 & 7 & 8 & 9 \\
       \bar1 & 0 & 0 & \bar2 & \bar1 & 1 & 0 & 2 & 2
       \end{array} \right),
\]
is now interpreted as ``place card 9 on the bottom of shelf 2, then place card 8 on the bottom of shelf 2, place card 7 on the bottom of shelf 0,'' and so on. We end up sorting the array as
\[
\left( \begin{array}{rrrrrrrrr}
       7 & 3 & 2 & 1 & 5 & 6 & 4 & 9 & 8 \\
       0 & 0 & 0 & \bar1 & \bar1 & 1 & \bar2 & 2 & 2
       \end{array} \right),
\]
so the permutation from the bottom-dealing shelf-shuffler gives permutation $732156498$ rather than the permutation $\pi(f)=237516489$ we found previously. Cards on each shelf alternately increase then decrease with the bottom-dealing mechanism, rather than decreasing then increasing with the top-dealing mechanism. (And the bottom-dealing mechanism has the has cards in decreasing order on shelf 0, rather than increasing order.) We choose the top-dealing mechanism for convenience.

From a statistical standpoint, i.e., the convergence of shuffling to uniformity, it should be clear the difference is trivial. Combinatorially, the effect of this choice will ultimately be for us to explain probabilistic results in terms of \emph{peaks} rather than \emph{valleys} of permutations. The translation between peaks and valleys is discussed in Section 3 of \cite{DFH}, particularly in the discussion around Theorem 3.2 and the proof Theorem 3.1.
\end{rem}

We have an analogous correspondence between $P$-partitions and our three flavors of riffle shuffles, but it is less immediately obvious. To explain the idea for up-down riffle shuffles, suppose \[A = (a_0, b_1, a_1, \ldots, b_m, a_m),\] is a weak composition of $n$. Define the poset $P_A$ to be the union of chains that correspond to the piles formed in the process of an up-down riffle shuffle, with the first $a_0$ cards in increasing order, the next $b_1$ cards in decreasing order, and so on. For example, if $A=(3,4,3,4,2)$, we have $P_A$ is the disjoint union of the five chains below (recall we read the chains from the bottom up):
\[
 \begin{tikzpicture}
 \draw (0,1) node {$P_A$:};
 \draw (2,0) node[fill=white, inner sep=1] {$1$}--(2,1) node[fill=white, inner sep=1] {$2$} -- (2,2) node[fill=white, inner sep=1] {$3$};
 \draw (4,0) node[fill=white, inner sep=1] {$7$} --(4,1) node[fill=white, inner sep=1] {$6$} -- (4,2) node[fill=white, inner sep=1] {$5$} -- (4,3) node[fill=white, inner sep=1] {$4$};
 \draw (6,0) node[fill=white, inner sep=1] {$8$} --(6,1) node[fill=white, inner sep=1] {$9$} -- (6,2) node[fill=white, inner sep=1] {$10$};
 \draw (8,0) node[fill=white, inner sep=1] {$14$} --(8,1) node[fill=white, inner sep=1] {$13$} -- (8,2) node[fill=white, inner sep=1] {$12$} -- (8,3) node[fill=white, inner sep=1] {$11$};
 \draw (10,0) node[fill=white, inner sep=1] {$15$} --(10,1) node[fill=white, inner sep=1] {$16$};
 \end{tikzpicture}
\]
Any linear extension $\sigma \in \LL(P_A)$ corresponds precisely to one of the interleavings of the stacks of cards in the riffle shuffle. Moreover, we have $1 <_{\sigma} 2 <_{\sigma} 3$, as well as $4 >_{\sigma} 5 >_{\sigma} 6 >_{\sigma} 7$, and so on, or equivalently:
\[
 \sigma^{-1}(1)<\sigma^{-1}(2)<\sigma^{-1}(3), \quad \sigma^{-1}(4)>\sigma^{-1}(5)>\sigma^{-1}(6)>\sigma^{-1}(7), \ldots.
\]
But this means $\sigma^{-1}$ is the sorting permutation for the $P_A$-partition $f$ such that {\tiny
\[
 f \circ \sigma^{-1} = \left( \begin{array}{rrrrrrrrrrrrrrrr}
       \sigma^{-1}(1) & \sigma^{-1}(2) & \sigma^{-1}(3) & \sigma^{-1}(4) & \sigma^{-1}(5) & \sigma^{-1}(6) & \sigma^{-1}(7) & \cdots & \sigma^{-1}(14) & \sigma^{-1}(15) &\sigma^{-1}(16) \\
       0 & 0 & 0 & \bar1 & \bar1 & \bar1 & \bar1 & \cdots & \bar2 & 2 & 2
       \end{array} \right),
\]
}
with image multiset
\[
 \{ 0,0,0,\bar1,\bar1,\bar1,\bar1,1,1,1,\bar2,\bar2,\bar2,\bar2,2,2\} = \{0^3, \bar1^4, 1^3, \bar2^4, 2^2\}.
\]

To summarize, we are saying that the weak composition-permutation pair $(A,\sigma)$, with $\sigma \in \mathcal{L}(P_A)$, is an outcome of an up-down riffle shuffle, but also, the pair $(A,\sigma)$ corresponds to a unique $P_A$-partition with sorting permutation $\sigma^{-1}$. We summarize this idea in the following proposition.

\begin{prop}[Up-down riffle shuffling and $P$-partitions]\label{prp:equiv2}
There is a bijection between the set of outcomes of up-down $m$-riffle shuffling and $\A([n];m)$. Namely, if $(A,\sigma)$ is an outcome of the shuffle with $A=(a_0,b_1,a_1,\ldots,b_m,a_m)$, then it corresponds to that $f \in \A(P_A;m) \subseteq \A([n];m)$ such that the image of $f$ is $\{0^{a_0}, \bar1^{b_1}, 1^{a_1},\ldots,\overline{m}^{b_m}, m^{a_m}\}$ and $\sigma^{-1}=\pi(f)$ is the sorting permutation of $f$.
\end{prop}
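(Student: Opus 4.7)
First I would define the forward map. Given $(A,\sigma)$ with $\sigma\in\LL(P_A)$, let $v=(v_1,\ldots,v_n)$ denote the weakly $\ZZ$-increasing sequence consisting of $a_0$ copies of $0$, followed by $b_1$ copies of $\bar1$, then $a_1$ copies of $1$, and so on up to $a_m$ copies of $m$. Set $f(\sigma^{-1}(k))=v_k$ for each $k\in[n]$; equivalently, $f(i)=v_{\sigma(i)}$. By construction the image multiset of $f$ is $\{0^{a_0},\bar1^{b_1},\ldots,m^{a_m}\}$ and $|f(i)|\leq m$.

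Next I would verify that $\pi(f)=\sigma^{-1}$. Sorting the pairs $(i,f(i))$ by $f$-value in $\ZZ$-order groups $[n]$ into consecutive blocks of positions sharing a common $v$-value. Within such a block the tie-break rule of Definition~\ref{def:permf} demands natural-increasing order for a nonbarred value and natural-decreasing order for a barred value. The hypothesis $\sigma\in\LL(P_A)$ is exactly what makes this work: the positions in the block of value $j'\geq 0$ correspond to the naturally labeled $P_A$-chain of size $a_{j'}$, whose elements therefore appear in natural-increasing order at those positions in $\sigma^{-1}$; the barred case is symmetric.

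Then I would argue that $f\in\A(P_A;m)$. Suppose $i<_{P_A}j$. The linear-extension property gives $\sigma^{-1}(i)<\sigma^{-1}(j)$, hence $v_{\sigma^{-1}(i)}\leq_\ZZ v_{\sigma^{-1}(j)}$. If this inequality is strict, both $\lp$ and $\len$ hold automatically from the definitions. If the two values are equal, then $i$ and $j$ must lie in a common chain of $P_A$; a natural chain carries a nonbarred value (so $f(i)\lp f(j)$ holds) while a reverse-natural chain carries a barred value (so $f(i)\len f(j)$ holds), which is exactly the $P_A$-partition condition in the two cases $i<_\NN j$ and $i>_\NN j$.

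Finally I would construct the inverse map: send $f\in\A([n];m)$ to $(A_f,\pi(f)^{-1})$, where $A_f$ is the image multiset of $f$, and check that $\pi(f)^{-1}\in\LL(P_{A_f})$ and that the two maps compose to the identity on each side. Counting then matches, both sets having cardinality $(2m+1)^n$. The main obstacle is the block-chain alignment used in the second and third paragraphs: one must show carefully that the hypothesis $\sigma\in\LL(P_A)$ places the elements $\sigma^{-1}(k)$ along each $v$-block so that they realize the corresponding $P_A$-chain in the order dictated by its labeling. Once this bookkeeping is settled, the sort-order identification, the $P_A$-partition check, and the inverse-map verification all drop out together.
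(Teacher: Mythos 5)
Your Paragraph~3 contains a critical index error, and more importantly the claim you are trying to prove there does not actually hold. You define the forward map by $f(i)=v_{\sigma(i)}$, but in Paragraph~3 you argue from $\sigma^{-1}(i)<\sigma^{-1}(j)$ (which is what $\sigma\in\LL(P_A)$ gives you for $i<_{P_A}j$) to the inequality $v_{\sigma^{-1}(i)}\leq_\ZZ v_{\sigma^{-1}(j)}$. This is not the same thing as comparing $f(i)=v_{\sigma(i)}$ with $f(j)=v_{\sigma(j)}$, so the implication you need --- $f(i)\lp f(j)$ or $f(i)\len f(j)$ --- does not follow. The permutation $\sigma$ and its inverse have been swapped, and knowing $\sigma^{-1}(i)<\sigma^{-1}(j)$ tells you nothing about the relative sizes of $\sigma(i)$ and $\sigma(j)$.

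The gap is not just a notational slip; the assertion $f\in\A(P_A;m)$ is in fact false in general. Take $n=4$, $m=1$, $A=(2,2,0)$, so that $P_A$ consists of the chains $1<_{P_A}2$ and $4<_{P_A}3$, and take $\sigma=1423\in\LL(P_A)$. Then $\sigma^{-1}=1342$, $v=(0,0,\bar1,\bar1)$, and the unique $f$ with this image and $\pi(f)=\sigma^{-1}$ is $f=(0,\bar1,0,\bar1)$. But the relation $4<_{P_A}3$ demands $f(4)\len f(3)$, i.e.\ $\bar1\len 0$, which fails since $0<_\ZZ\bar1$. What does hold --- and what the intended argument actually establishes --- is that $f\in\A(\sigma^{-1};m)$, i.e.\ that $f$ is a $\pi(f)$-partition (true essentially by Definition~\ref{def:permf}, using $\sigma\in\LL(P_A)$ only to guarantee the tie-breaking is consistent, as in your Paragraph~2). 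Since $\sigma^{-1}$ need not itself refine $P_A$, one cannot conclude $\A(\sigma^{-1};m)\subseteq\A(P_A;m)$. The bijection between outcomes and $\A([n];m)$ that is the real content of the proposition is still correct, and your Paragraphs~1, 2, and~4 handle it in essentially the way the paper sketches it in the preceding discussion; but Paragraph~3 needs to be reworked to argue for $f\in\A(\sigma^{-1};m)$ rather than $f\in\A(P_A;m)$.
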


We can modify Proposition \ref{prp:equiv2} to show outcomes of ordinary $m$-riffle shuffling are in bijection with $\A^+([n];m)$ and outcomes of $m$-down-up riffle shuffling correspond to elements of $\A^*([n];m)$.

\section{Shuffling probabilities from $P$-partitions}\label{sec:enum}

In this section we will survey some enumerative results for $P$-partitions and use Propositions \ref{prp:equiv} and \ref{prp:equiv2} to translate them into probabilistic results for shuffling.

\subsection{Enumerative results for $P$-partitions}

The \emph{order polynomial} for $P$, denoted $\op_P(m)$, counts the number of $P$-partitions bounded by $m$, i.e.,
\[
 \op_P(m) = |\A(P;m)| = |\{ f \in \A(P) : |f(i)| \leq m \}|.
\]
We similarly define $\op^*_P(m) = |\A^*(P;m)|$ and $\op^+_P(m)=|\A^+(P;m)|$.

An immediate corollary of Theorem \ref{thm:ftpp} is that order polynomials are sums of order polynomials for linear extensions.\footnote{In fact, this corollary, along with analysis of the case of a chain, gives a simple way to prove that order polynomials are actually polynomials.}

\begin{cor}\label{cor:oplinearex}
The order polynomial for a poset $P$ is the sum of the order polynomials for its linear extensions:
\[
 \op_P(m) = \sum_{\pi \in \LL(P)} \op_{\pi}(m),
\]
and similarly for $\op_P^*(m)$ and $\op_P^+(m)$.
\end{cor}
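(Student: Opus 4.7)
The plan is to deduce the corollary almost immediately from Theorem \ref{thm:ftpp}, by intersecting both sides of the disjoint union with the bounding condition and then taking cardinalities. First I would fix the poset $P$ and the bound $m$, and recall that by definition
\[
 \A(P;m) = \{ f \in \A(P) : |f(i)| \leq m \text{ for all } i \in P \}.
\]
The bounding condition $|f(i)|\leq m$ depends only on the values of $f$, not on the order relations in $P$, so it is preserved under the identification of $\A(P)$ with the union of the $\A(\pi)$ over linear extensions $\pi \in \LL(P)$.

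Next I would apply Theorem \ref{thm:ftpp} to $P$ to obtain the disjoint union
\[
 \A(P) = \bigsqcup_{\pi \in \LL(P)} \A(\pi),
\]
and intersect both sides with $\{ f : |f(i)| \leq m \text{ for all } i \}$. Since this intersection commutes with disjoint unions, we get
\[
 \A(P;m) = \bigsqcup_{\pi \in \LL(P)} \A(\pi;m).
\]
Taking cardinalities of both sides and using the definition $\op_P(m) = |\A(P;m)|$ immediately yields the identity
\[
 \op_P(m) = \sum_{\pi \in \LL(P)} \op_\pi(m).
\]

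Finally, for the variants $\op_P^*(m)$ and $\op_P^+(m)$, I would note that the decomposition in Theorem \ref{thm:ftpp} is purely set-theoretic on the level of functions $f : P \to \ZZ$, so it also restricts to the subsets $\A^*(P) = \{f \in \A(P) : f(i)\neq 0\}$ and $\A^+(P) = \{f \in \A(P) : f(i)\in\NN\}$. Intersecting further with the bound $|f(i)| \leq m$ gives the analogous disjoint unions for $\A^*(P;m)$ and $\A^+(P;m)$, and taking cardinalities yields the stated identities for $\op_P^*$ and $\op_P^+$. There is essentially no obstacle here; the only thing to be careful about is noting that the restrictions (boundedness, nonvanishing, positivity) are all conditions on the values of $f$ alone, which is why they are compatible with the poset-theoretic decomposition.
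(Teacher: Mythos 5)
Your proposal is correct and follows exactly the route the paper has in mind: the paper declares the corollary to be an immediate consequence of Theorem \ref{thm:ftpp}, and your argument of intersecting the disjoint union with the value-only restriction $\{f : |f(i)|\leq m\}$ (respectively $f(i)\neq 0$, $f(i)\in\NN$) and taking cardinalities is precisely the intended "immediate" deduction.
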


Order polynomials for antichains are easy enough, since there are no relations to worry about.

\begin{obs}[Antichain order polynomials]\label{obs:antichainop}
Let $[n]$ denote the antichain on $n$ elements. We have, for any $m\geq 0$,
\begin{align*}
 \op_{[n]}(m) &= (2m+1)^n, \\
 \op_{[n]}^*(m) &= (2m)^n, \\
 \op_{[n]}^+(m) &= m^n.
\end{align*}
\end{obs}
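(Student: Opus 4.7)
The plan is a direct counting argument, which is essentially forced by the poset structure being trivial. Since $[n]$ is an antichain, no two elements are comparable, so the two order-preservation conditions in Definition 2.4 are vacuous. Consequently, a $P$-partition $f \in \A([n])$ is nothing more than an arbitrary function $f \colon [n] \to \ZZ$, and each of the three subsets $\A([n])$, $\A^*([n])$, $\A^+([n])$ is determined entirely by the pointwise image restriction (no restriction, nonzero, or positive, respectively).

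Imposing the bound $|f(i)| \leq m$ therefore turns each count into the number of functions from $[n]$ into a finite subset $S$ of $\ZZ$, so $|\A(P;m)| = |S|^n$. First I would identify $S$ in each case. For $\op_{[n]}(m)$, the allowed values are $\{\bar m, \bar{m{-}1}, \dots, \bar 1, 0, 1, \dots, m\}$, which has $2m+1$ elements, giving $(2m+1)^n$. For $\op^*_{[n]}(m)$, we exclude $0$, leaving $2m$ allowed values and the count $(2m)^n$. For $\op^+_{[n]}(m)$, we keep only the (unbarred) positive integers in $\{1,2,\dots,m\}$, giving $m^n$.

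There is essentially no obstacle: the entire content is the observation that antichains impose no comparability constraints, so the independence across coordinates is immediate. As a sanity check, these three counts coincide with the total number of outcomes of the lazy, standard, and strict shelf shufflers with $m$ shelves, as described in Section 2.1 — $(2m+1)^n$, $(2m)^n$, and $m^n$ respectively — which is exactly the count predicted by the bijection of Proposition \ref{prp:equiv} applied to each mode.
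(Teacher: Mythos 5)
Your proof is correct and matches the paper's (implicit) reasoning exactly: the paper labels this an Observation with no proof beyond the remark that antichains have ``no relations to worry about,'' which is precisely the vacuousness argument you spell out. The identification of the allowed value sets of sizes $2m+1$, $2m$, and $m$ and the resulting $|S|^n$ count is the intended content.
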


The other extreme situation is the case of chains. To understand enumerative properties for order polynomials of chains, we need to discuss permutation statistics. A \emph{descent} of a permutation $\pi$ is an index $i$ such that $\pi(i) > \pi(i+1)$. We let $\des(\pi)$ denote the number of descents of $\pi$. A \emph{peak} of a permutation is an index $i$ such that $\pi(i-1) < \pi(i) > \pi(i+1)$, i.e., a descent preceded by a non-descent. The number of peaks is denoted by $\pk(\pi)$. A \emph{left peak} is a peak of the permutation $\pi$ augmented by $\pi(0)=0$. In other words, a left peak is a peak or a descent in position $1$. We let $\lpk(\pi)$ denote the number of left peaks. We have $\lpk(\pi) = \pk(\pi)$ if $\pi(1) < \pi(2)$ and $\lpk(\pi) = \pk(\pi)+1$ if $\pi(1) > \pi(2)$. We can now give relatively simple expressions for the order polynomials of chains.

\begin{prop}[Order polynomials for chains]\label{prp:chains}
Let $\pi$ denote a chain on $[n]$, i.e., a permutation in $S_n$. We have the following expressions for its order polynomials:
\begin{align}
 \sum_{m\geq 0} \op_{\pi}(m)t^m &= \frac{(4t)^{\lpk(\pi)}(1+t)^{n-2\lpk(\pi)}}{(1-t)^{n+1}}, \label{eq:opl} \\
 \sum_{m \geq 0} \op_{\pi}^*(m) t^m &= \frac{(4t)^{\pk(\pi)+1}(1+t)^{n-2\pk(\pi)-1}}{2(1-t)^{n+1}}, \label{eq:opp}\\
 \sum_{m \geq 0} \op_{\pi}^+(m) t^m &= \frac{ t^{\des(\pi)+1}}{(1-t)^{n+1}}.\label{eq:op}
\end{align}
Equivalently,
\begin{align*}
 \op_{\pi}(m) &= 4^{\lpk(\pi)}\sum_{a\geq 0} \binom{n+m-a}{n}\binom{n-2\lpk(\pi)}{a-\lpk(\pi)}, \\
 \op_{\pi}^*(m) &= 2\cdot 4^{\pk(\pi)}\sum_{a\geq 0} \binom{n-1+m-a}{n}\binom{n-1-2\pk(\pi)}{a-\pk(\pi)},
\mbox{ and}\\
 \op_{\pi}^+(m) &= \binom{n-1+m-\des(\pi)}{n}.
\end{align*}
\end{prop}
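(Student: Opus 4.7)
The plan is to prove each of the three generating function identities directly, since a chain $\pi$ has itself as its only linear extension and so Corollary \ref{cor:oplinearex} offers no further simplification here. For formula \eqref{eq:op} I would apply the classical ``shift'' bijection: a positive $P$-partition of $\pi$ bounded by $m$ is a sequence $1 \leq a_1 \leq a_2 \leq \cdots \leq a_n \leq m$ (with $a_i = f(\pi(i))$) satisfying $a_i < a_{i+1}$ at each $i \in \Des(\pi)$, and the substitution $b_i := a_i - |\Des(\pi) \cap [i-1]|$ carries this bijectively onto weakly increasing sequences in $\{1,\ldots, m - \des(\pi)\}$. There are $\binom{n+m-\des(\pi)-1}{n}$ such sequences, and summing the resulting series using $\sum_{m\geq 0}\binom{m+n}{n}t^m = (1-t)^{-(n+1)}$ yields \eqref{eq:op}.

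For formulas \eqref{eq:opp} and \eqref{eq:opl}, the approach is to split each $P$-partition $f$ of $\pi$ into an ``absolute value'' sequence $\alpha_i := |f(\pi(i))|$ and a ``sign'' sequence $\epsilon_i := \sgn(f(\pi(i)))$, then count valid pairs $(\alpha,\epsilon)$. The $P$-partition conditions translate to: the $\alpha_i$ form a weakly increasing sequence in $\NN$, and whenever $\alpha_i = \alpha_{i+1}$ the sign pair is restricted---at an ascent of $\pi$ one must have $\epsilon_{i+1}=+$, and at a descent one must have $\epsilon_i=-$. A peak of $\pi$ at position $i$ is then precisely a location where both rules would collide if $\alpha_{i-1} = \alpha_i = \alpha_{i+1}$, so that peaks of $\pi$ force strict increases in the $\alpha$ sequence. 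In the left-enriched setting, an additional obstruction at position $1$ shows up exactly when $\pi(1) > \pi(2)$, which is precisely when a left peak appears at position $1$, accounting for the $\lpk(\pi)$ exponent in place of $\pk(\pi)+1$.

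After setting up this dichotomy, the count factors as a sum over shapes of the $\alpha$-sequence (weakly increasing, with $\pk(\pi)$ or $\lpk(\pi)$ forced strict steps) times the number of compatible sign sequences, which contribute a factor of $2$ at each non-peak block boundary (free sign choice) and a factor of $4$ at each peak (the strict jump $t$ combined with sign freedom). Organizing this as a product of geometric series in $t$ produces the numerators $(4t)^{\pk(\pi)+1}(1+t)^{n-2\pk(\pi)-1}$ and $(4t)^{\lpk(\pi)}(1+t)^{n-2\lpk(\pi)}$, with denominator $(1-t)^{n+1}$ coming from the weakly-increasing structure of $\alpha$. The equivalent binomial-coefficient formulas then follow by expanding $(1+t)^{n-2p}$ and extracting the coefficient of $t^m$ using $\binom{n-2p}{a-p}$ and $\binom{n+m-a}{n}$.

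The main obstacle will be the careful bookkeeping of sign constraints within maximal blocks of tied $\alpha$-values, confirming that the net contribution depends only on $\pk(\pi)$ (respectively $\lpk(\pi)$) and not on the finer detail of $\Des(\pi)$. A clean alternative is to invoke Stembridge's enriched $P$-partition theorem \cite{Stem} (and Petersen's left-enriched analog from \cite{Pet}), which directly establishes that $\sum_f x^{|f|}$ is a fundamental peak (respectively left peak) quasisymmetric function $K_\pi(x)$ depending only on $\Pk(\pi)$, with an explicit monomial sum form. Specializing $x_1 = \cdots = x_m = 1$ and $x_{m+1} = \cdots = 0$ in $K_\pi$ and simplifying then yields \eqref{eq:opp} and \eqref{eq:opl} without the need for case-by-case sign analysis.
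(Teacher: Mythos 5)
Your treatment of \eqref{eq:op} via the standard shift bijection is exactly what the paper does, so that part matches. For \eqref{eq:opp} and \eqref{eq:opl}, your primary approach (decompose $f$ into an absolute-value sequence $\alpha$ and a sign sequence $\epsilon$, then count compatible pairs block-by-block) is genuinely different from the paper's, which instead invokes the Stembridge/Petersen paradigm that enriched $\pi$-partitions with peak set $J$ decompose as a disjoint union of ordinary $\tau$-partitions over all $\tau$ with $\Des(\tau) \supseteq J$; your ``clean alternative'' paragraph is precisely the route the paper takes. The direct approach is more elementary and self-contained, while the paper's route gets the uniformity in $\Des(\pi)$ for free from Stembridge's peak-set theorem rather than by hand.

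However, your sign-block plan is sketchier than you acknowledge in one key place: a peak at position $i$ does \emph{not} force a strict increase in the $\alpha$-sequence at either adjacent step. What it rules out is the triple tie $\alpha_{i-1}=\alpha_i=\alpha_{i+1}$ (since the ascent at $i-1$ forces $\epsilon_i=+$ and the descent at $i$ forces $\epsilon_i=-$), which is a disjunctive constraint --- at least one of the two neighboring steps is strict, but you don't control which. This is exactly why the numerator is not a simple product of per-position factors: the exponent is $\pk(\pi)+1$ rather than $\pk(\pi)$, there is an extra global factor of $1/2$, and the remaining $(1+t)^{n-2\pk(\pi)-1}$ does not decompose position-by-position the way your description suggests. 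Making the direct count go through requires organizing the sum over maximal tied blocks and showing that, once the descent pattern inside a block contains no peak, the number of compatible sign assignments depends only on the block length --- a nontrivial inclusion-exclusion or shape-sum argument that you gesture at but do not carry out. So treat your first paragraph on \eqref{eq:opp}--\eqref{eq:opl} as a plausible plan needing real work, and the Stembridge/Petersen citation as the dependable fallback, which is also what the paper leans on.
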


Equation \eqref{eq:op} is proved in \cite[Theorem 4.5.14]{EC1}, Equation \eqref{eq:opp} is from \cite[Theorem 4.1]{Stem}, and Equation \eqref{eq:opl} is from \cite[Theorem 4.6]{Pet}.

The proof of Equation \eqref{eq:op} is elementary. The formula for $\op_{\pi}^+(m)$ follows from recognizing that $P$-partitions for a permutation (i.e., a chain) are weakly increasing sequences of integers in an interval:
\[
1\leq f(\pi(1)) \leq f(\pi(2)) \leq \cdots \leq f(\pi(n))\leq m,
\]
with $f(\pi(i)) < f(\pi(i+1))$ if and only if $\pi(i) > \pi(i+1)$, i.e., if $i \in \Des(\pi)$. If there are $k$ strict inequalities, the number of such integer sequences is $\binom{m+(n-1-k)}{n}$.

For example, the integer sequences $(a_1,\ldots,a_7)$ satisfying
\[
 1\leq a_1 \leq a_2 < a_3 \leq a_4 < a_5 < a_6 \leq a_7 \leq m,
\]
also satisfy the inequalities
\[
 1\leq a_1 < (a_2 +1) < (a_3+1) < (a_4+2) < (a_5+2) < (a_6+2) < (a_7 + 3) \leq m+3,
\]
for which the number of solutions is $\binom{m+(6-3)}{7}$.

The proofs of Equations \eqref{eq:opp} and \eqref{eq:opl} are more subtle, but essentially follow from the general idea that the set of enriched $\pi$-partitions for a permutation $\pi$ with peak set $J$ corresponds to a union of sets of ordinary $\tau$-partitions for permutations $\tau$ with descent set containing $J$. See \cite[Section 4]{Pet} for full details.

The theory of $P$-partitions yields the following identities as well, which are useful for explaining repeated shuffles.

\begin{prop}\label{prp:decomp}
For any integers $k$ and $l$ and any permutation $\pi \in S_n$,
\begin{align}
 \op_{\pi}(2kl+k+l) &= \sum_{\sigma\tau=\pi} \op_{\sigma}(k)\op_{\tau}(l),\label{eq:decomp1} \\
 \op^*_{\pi}(2kl) &=\sum_{\sigma\tau=\pi} \op^*_{\sigma}(k)\op^*_{\tau}(l), \label{eq:decomp2}\\
 \op^+_{\pi}(kl) &=\sum_{\sigma\tau=\pi} \op^+_{\sigma}(k)\op^+_{\tau}(l). \label{eq:decomp3}
\end{align}
\end{prop}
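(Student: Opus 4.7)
The plan is to prove all three identities by constructing combinatorial bijections that decompose bounded $P$-partitions with a large bound into pairs of bounded $P$-partitions with smaller bounds, with the permutation constraint $\sigma\tau = \pi$ emerging naturally from the decomposition. For each identity, this amounts to a ``base decomposition'' of the values of the $P$-partition, and the enumerative identity is recovered by counting both sides.

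I would focus first on \eqref{eq:decomp3}. Given $f \in \A^+(\pi; kl)$, I would write each value via Euclidean division $f(i) = k(g(i) - 1) + h(i)$ with $g(i) \in \{1,\ldots,l\}$ and $h(i) \in \{1,\ldots,k\}$. Define $\tau \in S_n$ to be the sorting permutation of $g$ (viewed as a positive function on the antichain $[n]$, with ties broken by index order), so $g$ becomes a $\tau$-partition by construction, and set $\sigma = \pi \tau^{-1}$. The heart of the proof is verifying that $h$ is then a $\sigma$-partition. This follows by analyzing how the ``carries'' in the decomposition interact with the non-decreasing condition on $f \circ \pi$: whenever $g(\pi(j)) = g(\pi(j+1))$, the inequality $f(\pi(j)) \leq f(\pi(j+1))$ translates to $h(\pi(j)) \leq h(\pi(j+1))$, and this is strict precisely at those descents of $\pi$ that correspond to descents of $\sigma$. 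The inverse map is clear: given $\sigma, \tau$ with $\sigma\tau = \pi$ and $P$-partitions $g \in \A^+(\tau; l)$, $h \in \A^+(\sigma; k)$, the same formula defines $f \in \A^+(\pi; kl)$.

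The analogous bijections for \eqref{eq:decomp2} and \eqref{eq:decomp1} follow the same template but use the barred integer ordering. For \eqref{eq:decomp2}, the label set $\{\bar 1, 1, \ldots, \overline{2kl}, 2kl\}$ of size $4kl$ factors as a Cartesian product of label sets of sizes $2k$ and $2l$, and the division-based decomposition is replaced by a pair-decomposition that respects barred structure. For \eqref{eq:decomp1}, the size-$(2(2kl+k+l)+1)$ label set factors as a product of $(2k+1)$ and $(2l+1)$ elements, the factors matching the label sets for $k$- and $l$-up-down shuffles. In both cases, the sorting permutation $\tau$ of the outer component and the derived $\sigma = \pi\tau^{-1}$ are defined analogously, and one checks that the inner component is a $\sigma$-partition of the appropriate flavor.

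The main obstacle is the bar accounting in the enriched and left enriched cases: because the ordering $0 <_\ZZ \bar 1 <_\ZZ 1 <_\ZZ \bar 2 <_\ZZ 2 <_\ZZ \cdots$ is interleaved, and because the barredness of a value affects tie-breaking in the sorting permutation (Definition \ref{def:permf}), the decomposition of a combined barred value into component values must be chosen carefully so that the peak statistics of $\sigma$ and $\tau$ (rather than descent statistics) are correctly recorded. An alternative, slicker route is to read each identity, via Propositions \ref{prp:equiv} and \ref{prp:equiv2}, as the assertion that composing two shuffles of the relevant flavor with parameters $k$ and $l$ produces the same distribution as a single shuffle with the combined parameter; this can be verified by coupling the random labels card-by-card, but the same interleaved-integer accounting is needed to carry the enriched cases through.
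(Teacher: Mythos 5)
Your overall strategy---view each $f\in\A^+(\pi;kl)$ as a bipartite function by Euclidean division into a high-order digit $g$ and a low-order digit $h$, then extract a pair $(\sigma,\tau)$ with $\sigma\tau=\pi$---is exactly the bipartite $P$-partition paradigm the paper sketches (and delegates to \cite{Pet} and \cite{Ge}). But the specific bijection you describe for \eqref{eq:decomp3} is wrong. Take $n=3$, $\pi=132$, $k=l=2$, and $f=(1,4,1)$, so that $f(1)\leq f(3)<f(2)$ reads $1\leq 1<4$. Then $g=(1,2,1)$ and $h=(1,2,1)$. Your $\tau$ (the sort of $g$ with natural tie-break) is $132$, giving $\sigma=\pi\tau^{-1}=123$; but $h=(1,2,1)$ is not a $123$-partition, since $h(2)=2>h(3)=1$. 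Worse, no factorization $\sigma\tau=132$ has $g\in\A^+(\tau;2)$ and $h\in\A^+(\sigma;2)$ simultaneously: $g$ is a $\tau$-partition only for $\tau=132$, and $h$ is a $\sigma$-partition only for $\sigma=132$, yet $132\cdot 132 = 123\neq 132$. So the ``heart of the proof'' you assert simply does not hold.

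Two changes are needed, and both are essential. First, you must stable-sort the \emph{low-order} digit $h$, not $g$ (least-significant-digit radix sort); call the result $\sigma$, so that $h\in\A^+(\sigma;k)$ holds by construction. Second---and this is the ``re-interpretation'' the paper's sketch alludes to---the other coordinate must be re-indexed before it becomes a $P$-partition: it is $g\circ\sigma$, not $g$, that lies in $\A^+(\tau;l)$, where $\tau$ is its sorting permutation, and $\sigma\tau=\pi$ then follows from the stable-sort composition lemma. The inverse map carries the same twist, $(\sigma,g',\tau,h')\mapsto f=k\bigl((g'\circ\sigma^{-1})-1\bigr)+h'$; the untwisted formula you propose already fails for $\sigma=\tau=213$ (so $\pi=e$) with $g'=h'=(2,1,2)$, which yields $f=(4,1,4)\notin\A^+(e;4)$. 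Your ``alternative slicker route'' via composing two strict shelf shuffles is in fact exactly this LSD radix-sort argument, and tracking which shelf each card lands on in the second pass is precisely where the $g\circ\sigma$ re-indexing comes from; that route would work, but the direct bijection as you have written it does not. The same re-indexing step would of course also need to be supplied in the enriched and left-enriched cases \eqref{eq:decomp2} and \eqref{eq:decomp1}, on top of the bar accounting you already flag.
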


We remark that Equation \eqref{eq:permsum} from the introduction is a restatement of identity \eqref{eq:decomp3} for the positive order polynomials $\op_{\pi}^+$.

Each of these identities follows from a similar paradigm of decomposing a \emph{bipartite $P$-partition} $\mathbf{f}: \pi \to S\times T$, where $S$ and $T$ are appropriately chosen totally ordered sets (e.g., $S\times T = [k]\times [l]$ in the simplest case) and $S\times T$ is given a linear ordering (e.g., lexicographic ordering in the simplest case). The image of such a function is a collection of pairs
\[
\mathbf{f}(\pi) = \{ (f_1(1), f_2(1)), (f_1(2),f_2(2)), \ldots, (f_1(n), f_2(n))\},
\]
which can be re-interpreted as a pair of $P$-partitions $f_1 : \sigma \to S$ and $f_2: \tau \to T$, such that $\sigma\tau = \pi$. More details can be found in \cite{Pet}, which proves each of \eqref{eq:decomp1}, \eqref{eq:decomp2}, and \eqref{eq:decomp3}. It should be noted, though, that the case for $\op^+_{\pi}$ (which provides the motivation for the other cases) is found in earlier work of Gessel \cite{Ge}.

When desired, we can drop the permutation from the notation for order polynomials and write only the statistic. That is, fix $n$ and let
\begin{align*}
 \op(n,k;m) &= \op_{\pi}(m) \mbox{ for some $\pi \in S_n$ with $\lpk(\pi)=k$,} \\
 \op^*(n,k;m) &= \op^*_{\pi}(m) \mbox{ for some $\pi \in S_n$ with $\pk(\pi)=k$,
and}\\
 \op^+(n,k;m) &= \op^+_{\pi}(m) \mbox{ for some $\pi \in S_n$ with $\des(\pi)=k$.}
\end{align*}

The following lemma will be useful for some of our convergence estimates later on. For
strict shelf shufflers it is obvious from the explicit formula for $\op^+(n,k;m)$. For
standard shelf shufflers it was given a probabilistic proof in \cite{DFH}. We believe the
result to be new for lazy shelf shufflers, and the proof method to be new for all three
cases. We are able to extend the probabilistic proof of \cite{DFH} to the lazy setting,
but we believe the proof we give here to be more conceptual.

\begin{lemma}[Monotonicity Lemma]\label{lem:monotone}
Order polynomials are monotone decreasing in their respective statistical indices, i.e., for any $m, n,$ and $k$ we have
\begin{align}
\op(n,k; m) &\geq \op(n,k+1;m), \\
\op^*(n,k;m) &\geq \op^*(n,k+1;m),\\
 \op^+(n,k;m) &\geq \op^+(n,k+1;m).
\end{align}
\end{lemma}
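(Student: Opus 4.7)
My plan is to derive all three inequalities from the generating function identities in Proposition \ref{prp:chains}, exploiting the algebraic identity $(1+t)^2 - 4t = (1-t)^2$. The positive case is immediate from the closed form $\op^+(n,k;m) = \binom{n-1+m-k}{n}$: incrementing $k$ by $1$ decreases the upper index of the binomial coefficient by $1$, and $\binom{N}{n}$ is nondecreasing in $N\geq 0$ (with the convention $\binom{N}{n}=0$ for $N<n$).

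For the left peak and peak cases, I would subtract the two generating functions and factor out a common prefix. Using \eqref{eq:opl} gives
\[
\sum_{m\geq 0}\bigl[\op(n,k;m)-\op(n,k+1;m)\bigr]t^m = \frac{(4t)^{k}(1+t)^{n-2k-2}\bigl[(1+t)^2 - 4t\bigr]}{(1-t)^{n+1}} = \frac{(4t)^{k}(1+t)^{n-2k-2}}{(1-t)^{n-1}}.
\]
When $n-2k-2\geq 0$, the right-hand side is a product of power series with manifestly nonnegative coefficients, so every coefficient on the left is nonnegative and the inequality follows term by term in $m$. An identical computation with \eqref{eq:opp} yields
\[
\sum_{m\geq 0}\bigl[\op^*(n,k;m)-\op^*(n,k+1;m)\bigr]t^m = \frac{(4t)^{k+1}(1+t)^{n-2k-3}}{2(1-t)^{n-1}},
\]
which again has nonnegative coefficients whenever $n-2k-3\geq 0$.

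The remaining edge cases are those in which the exponent on $(1+t)$ is negative; these correspond precisely to $k+1$ exceeding the maximum number of left peaks (resp.\ peaks) attainable in $S_n$, so $\op(n,k+1;m)=0$ (resp.\ $\op^*(n,k+1;m)=0$) and the inequality is trivial. The heart of the proof is the factorization $(1+t)^2-4t=(1-t)^2$, which converts the difference of rational generating functions into a single product with nonnegative coefficients; the main obstacle is simply the bookkeeping required to align the edge cases of the algebraic formula with the combinatorial fact that no permutation in $S_n$ has more than $\lfloor n/2\rfloor$ left peaks or $\lfloor (n-1)/2\rfloor$ peaks.
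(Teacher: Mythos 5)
Your proposal is correct, and it takes a genuinely different route from the paper. The paper's argument is purely combinatorial: it picks canonical permutations $\pi, \pi'$ with $\lpk(\pi)=k+1$ and $\lpk(\pi')=k$ (namely, products of disjoint adjacent transpositions), and then builds an explicit injection $\A(\pi)\to\A(\pi')$ by locally modifying the values $f(2k+1),f(2k+2)$, splitting into cases according to whether $f(2k+2)<f(2k+1)$ or $f(2k+2)=f(2k+1)$ is barred. Your argument, by contrast, goes directly through the generating-function formulas of Proposition~\ref{prp:chains}: differencing in $k$ and using $(1+t)^2-4t=(1-t)^2$ collapses the numerator so that two powers of $(1-t)$ cancel against the denominator, leaving $(4t)^k(1+t)^{n-2k-2}/(1-t)^{n-1}$ (resp.\ $(4t)^{k+1}(1+t)^{n-2k-3}/\bigl(2(1-t)^{n-1}\bigr)$), a product of power series with nonnegative coefficients whenever the exponent of $(1+t)$ is nonnegative. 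Your edge-case bookkeeping is also right: the exponent goes negative exactly when $k+1$ exceeds $\lfloor n/2\rfloor$ (resp.\ $\lfloor(n-1)/2\rfloor$), i.e.\ precisely when $\op(n,k+1;m)$ (resp.\ $\op^*(n,k+1;m)$) is zero by convention. The trade-off: your proof is shorter and more uniform but leans on Proposition~\ref{prp:chains} as a black box, while the paper's injective proof is more self-contained and ``explains'' the inequality at the level of individual $P$-partitions, which is part of why the authors describe it as more conceptual. (The paper even notes that the $\op^+$ case is ``obvious from the explicit formula''; your contribution is observing that the same philosophy extends to $\op$ and $\op^*$ via the factorization.)
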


\begin{proof}
The argument in each case is to choose a canonical permutation $\pi$ with statistic $k+1$, another permutation $\pi'$ with statistic $k$, and construct an injection from $\A(\pi) \to \A(\pi')$. If $k$ is so large that no such permutation $\pi$ exists, then the order polynomial equals zero and the inequality holds trivially.

We will handle $\op(n,k;m)$ in detail. The arguments for $\op^+$ and $\op^*$ are similar.

Fix $n$, fix $k+1\leq n/2$ and let $\pi \in S_n$ be the permutation that swaps $2i$ and $2i-1$, for each $i=1,\ldots,k+1$. In one-line notation,
\[
\pi = 214365\cdots (2k)(2k-1)(2k+2)(2k+1)(2k+3)\cdots n.
\]
Let $\pi'$ be similar, but with the $(k+1)$st pair unswapped:
\[
\pi' = 214365\cdots (2k)(2k-1)(2k+1)(2k+2)\cdots n.
\]
By construction, $\lpk(\pi) = k+1$ and $\lpk(\pi')=k$.

The condition for a function $f$ in $\A(\pi)$ is:
\[
 f(2)\len f(1) \lp \cdots \len f(2k-1) \lp f(2k+2) \len f(2k+1) \lp f(2k+3) \lp \cdots \lp f(n).
\]
And the condition for a function $g$ in $\A(\pi')$ is:
\begin{equation}\label{eq:g}
 g(2)\len g(1) \lp \cdots \len g(2k-1) \lp g(2k+1) \lp g(2k+2) \lp g(2k+3) \lp \cdots \lp g(n).
\end{equation}

Now let $f \in \A(\pi)$. There are two cases to consider: either $f(2k+2)<f(2k+1)$ or $f(2k+2)=f(2k+1)\in \{ \bar 1, \bar 2, \ldots\}$.

On the one hand, suppose $f(2k+2)<f(2k+1)$. To get a $P$-partition for $\pi'$ we define $g=f'$ by:
\begin{itemize}
\item $f'(2k+1)=f(2k+2)$,
\item $f'(2k+2)=f(2k+1)$, and
\item $f'(i) = f(i)$ otherwise.
\end{itemize}

On the other hand, suppose $f(2k+2)=f(2k+1) = \overline{a}$. Now we define $g=f'$ by:
\begin{itemize}
\item $f'(2k+1)=f'(2k+2)=a$, and
\item $f'(i) = f(i)$ otherwise.
\end{itemize}

In each case, we can check that $f'(2k-1)\lp f'(2k+1)$ and $f'(2k+2) \lp f'(2k+3)$, so $f'$ satisfies all the conditions of \eqref{eq:g}, and is indeed a $P$-partition for $\pi'$.

The functions $f'$ constructed in the first case have $f'(2k+1)\neq f'(2k+2)$, so the two cases do not overlap, yielding the desired injection $f\mapsto f'$.
\end{proof}

\subsection{Consequences for shuffling probabilities}

We now connect the results for $P$-partitions to shuffling probabilities. To begin, let $m$ and $n$ be positive integers, and let $\pi$ be a permutation in $S_n$. Consider an $m$-shelf shuffler, and define
\begin{itemize}
\item $x_m(\pi)$ to be the probability of obtaining $\pi$ in lazy mode,
\item $x_m^*(\pi)$ to be the probability of obtaining $\pi$ in standard mode, and
\item $x_m^+(\pi)$ to be the probability of obtaining $\pi$ in strict mode.
\end{itemize}
Similarly, we define $y_m(\pi)$, $y_m^*(\pi)$, and $y_m^+(\pi)$ to be probabilities of obtaining $\pi$ from an $m$-up-down riffle shuffle, an $m$-down-up riffle shuffle, and a classic $m$-riffle shuffle, respectively. Our choice of notation is suggestive of the following exact formulas for these probabilities in terms of order polynomials. We believe this to be new in the lazy case.

\begin{prop}\label{prp:shufprobs}
For each permutation $\pi \in S_n$ and each positive integer $m$, we have the following expressions for shuffling probabilities:
\begin{align}
x_m(\pi) &= \frac{\op_{\pi}(m)}{(2m+1)^n},\label{eq:prob1}\\
x^*_m(\pi) &= \frac{ \op^*_{\pi}(m) }{(2m)^n},\label{eq:prob2}\\
x_m^+(\pi) &= \frac{ \op^+_{\pi}(m) }{m^n}.\label{eq:prob3}
\end{align}
Moreover, the probabilities for shelf-shuffling and riffle shuffling are related via
\[
 y_m(\pi) = x_m(\pi^{-1}), \quad y_m^*(\pi) = x_m^*(\pi^{-1}), \quad \mbox{ and } \quad y^+_m(\pi) = x^+_m(\pi^{-1}).
\]
\end{prop}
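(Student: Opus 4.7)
The plan is to combine the equivalences between shuffle outcomes and $P$-partitions from Propositions~\ref{prp:equiv} and~\ref{prp:equiv2} with Theorem~\ref{thm:ftpp}, which decomposes $\A([n];m)$ as a disjoint union indexed by sorting permutations.

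First I would handle the three shelf-shuffling probabilities. In lazy mode, each of the $(2m+1)^n$ equally likely outcomes corresponds bijectively to an $f \in \A([n];m)$ via Proposition~\ref{prp:equiv}, with the resulting permutation being $\pi(f)$. Applying Theorem~\ref{thm:ftpp} to the antichain $[n]$ gives
\[
 \A([n];m) = \bigsqcup_{\sigma \in S_n} \A(\sigma;m),
\]
so the outcomes yielding a fixed $\pi$ are exactly the elements of $\A(\pi;m)$. This set has cardinality $\op_\pi(m)$, establishing \eqref{eq:prob1}. The identical argument, using the restrictions $\A^*([n];m)$ and $\A^+([n];m)$ in place of $\A([n];m)$, together with the analogous decompositions of $\op_P^*$ and $\op_P^+$ from Corollary~\ref{cor:oplinearex}, yields \eqref{eq:prob2} and \eqref{eq:prob3}.

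Next I would turn to the riffle shuffling identities. Proposition~\ref{prp:equiv2} puts outcomes $(A,\sigma)$ of the up-down $m$-riffle shuffle in bijection with elements $f \in \A([n];m)$, each outcome occurring with probability $1/(2m+1)^n$, and crucially with $\pi(f) = \sigma^{-1}$. Consequently the outcomes producing a given permutation $\pi$ are precisely those $f \in \A(\pi^{-1};m)$, of which there are $\op_{\pi^{-1}}(m)$. Dividing by $(2m+1)^n$ and comparing with \eqref{eq:prob1} simultaneously delivers the probability formula $y_m(\pi) = \op_{\pi^{-1}}(m)/(2m+1)^n$ and the inversion identity $y_m(\pi) = x_m(\pi^{-1})$. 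The analogous refinements of Proposition~\ref{prp:equiv2} for down-up and classical riffle shuffles (noted immediately after its statement) give $y_m^*(\pi) = x_m^*(\pi^{-1})$ and $y_m^+(\pi) = x_m^+(\pi^{-1})$.

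The step requiring the most care is not really difficulty but bookkeeping: the inversion $\pi \mapsto \pi^{-1}$ enters solely because the $P$-partition $f$ attached to a riffle outcome $(A,\sigma)$ has sorting permutation $\pi(f) = \sigma^{-1}$ rather than $\sigma$. Since this asymmetry is already built into Proposition~\ref{prp:equiv2}, the proof is essentially a clean assembly of previously established bijections, with no new combinatorial content needed.
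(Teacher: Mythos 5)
Your proposal is correct and follows essentially the same route as the paper: invoke Proposition~\ref{prp:equiv} to identify lazy shelf-shuffler outcomes with bounded $P$-partitions, count $(2m+1)^n$ total outcomes and $\op_\pi(m)$ outcomes sorting to $\pi$, and then deduce the riffle identities from Proposition~\ref{prp:equiv2} via the observation that the sorting permutation of the associated $P$-partition is $\sigma^{-1}$. You spell out a few steps the paper leaves terse (notably the role of the disjoint-union decomposition and the explicit count $\op_{\pi^{-1}}(m)/(2m+1)^n$ for $y_m(\pi)$), but the underlying argument is identical.
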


In essence, this result says that the probability of obtaining $\pi$ from shelf shuffling is the probability that a random $P$-partition has sorting permutation $\pi$, and the probability of obtaining $\pi$ from riffle shuffling is the probability that a random $P$-partition has sorting permutation $\pi^{-1}$.

\begin{proof}
By Proposition \ref{prp:equiv}, we know that choosing a random $P$-partition in $\A(P;m)$ is equivalent to a random outcome of an $m$-shelf shuffler in lazy mode. By Observation \ref{obs:antichainop}, there are
$\op_{[n]}(m) = (2m+1)^n$ such outcomes. By Definition \ref{def:permf} and the definition of the order polynomial, precisely $\op_{\pi}(m)$ of these correspond to the permutation $\pi$. This proves Equation \eqref{eq:prob1} for $x_m(\pi)$.

The fact that $x_m(\pi)=y_m(\pi^{-1})$ is an immediate consequence of Proposition \ref{prp:equiv2}.

The arguments proving \eqref{eq:prob2} and \eqref{eq:prob3} (for standard and strict shuffling modes) are similar.
\end{proof}

We next consider, in each mode (lazy, standard, strict), a generating function for the entire probability distribution as an element in the group algebra of the symmetric group. That is, define
\begin{align*}
 \phi_n(m) &=\sum_{\pi \in S_n} x_m(\pi)\pi,\\
 \phi^*_n(m) &=\sum_{\pi \in S_n} x^*_m(\pi)\pi, \mbox{ and}\\
\phi^+_n(m) &=\sum_{\pi \in S_n} x^+_m(\pi)\pi.
\end{align*}

As a corollary to Proposition \ref{prp:decomp}, we get the following identities for the distributions. (These identities do not require that $k,l \geq 0$, though our proof does).

\begin{cor}\label{cor:convolution}
For each $n\geq 1$ and $k, l\geq 0$, we have
\begin{align}
 \phi_n(k)\phi_n(l) &= \phi_n(2kl+k+l),\\
 \phi^*_n(k)\phi^*_n(l) &= \phi^*_n(2kl), \mbox{ and}\\
 \phi^+_n(k)\phi^+_n(l) &= \phi^+_n(kl).
\end{align}
\end{cor}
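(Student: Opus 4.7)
The plan is to show that each identity in Corollary \ref{cor:convolution} follows by direct computation in the group algebra, reducing the claim to the corresponding identity in Proposition \ref{prp:decomp} after observing a convenient factorization of the normalizing denominator.

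First I would expand the product $\phi_n(k)\phi_n(l)$ using the definition and collect terms by $\pi = \sigma\tau$:
\[
 \phi_n(k)\phi_n(l) = \sum_{\pi \in S_n}\left(\sum_{\sigma\tau=\pi} x_k(\sigma)x_l(\tau)\right)\pi.
\]
So it suffices to verify, for each $\pi \in S_n$, the scalar identity
\[
 x_{2kl+k+l}(\pi) = \sum_{\sigma\tau=\pi} x_k(\sigma)x_l(\tau),
\]
and analogously for the starred and plus versions.

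The key algebraic observation is that the denominators multiply compatibly. Indeed, $(2k+1)(2l+1) = 4kl+2k+2l+1 = 2(2kl+k+l)+1$, while $(2k)(2l) = 2(2kl)$, and trivially $k\cdot l = kl$. Substituting the formulas from Proposition \ref{prp:shufprobs}, the lazy-mode identity becomes
\[
 \frac{\op_\pi(2kl+k+l)}{\bigl((2k+1)(2l+1)\bigr)^n} = \sum_{\sigma\tau=\pi} \frac{\op_\sigma(k)\op_\tau(l)}{(2k+1)^n(2l+1)^n},
\]
which after clearing the common denominator reduces precisely to Equation \eqref{eq:decomp1} of Proposition \ref{prp:decomp}. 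The standard-mode and strict-mode identities reduce to \eqref{eq:decomp2} and \eqref{eq:decomp3} in exactly the same way.

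There is no real obstacle here: the substance of the corollary lives inside Proposition \ref{prp:decomp}, which is the genuinely combinatorial statement (proved via bipartite $P$-partitions). The only thing worth flagging is the arithmetic check that $(2k+1)(2l+1) = 2(2kl+k+l)+1$, which is exactly what makes the lazy convolution parameter $2kl+k+l$ the right one — and which geometrically reflects that, after lazy shuffling with $k$ shelves and then $l$ shelves, the effective number of shelf-and-side outcomes per card is $(2k+1)(2l+1)$, matching a single lazy shuffle with $2kl+k+l$ shelves. The other two cases express the analogous fact that $2k$-fold and $k$-fold outcome counts multiply cleanly.
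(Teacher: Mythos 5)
Your proof is correct and follows essentially the same route as the paper: expand the product in the group algebra, substitute the probability formulas from Proposition \ref{prp:shufprobs}, observe the factorization $(2k+1)(2l+1)=2(2kl+k+l)+1$ (and its analogues), and invoke Proposition \ref{prp:decomp}. The paper handles the lazy case in detail and notes the others are similar, exactly as you do.
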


\begin{proof}
We handle the lazy mode case in detail. Other shuffling modes are similar.

We have
\begin{align*}
 \phi_n(k)\phi_n(l) &= \left(\sum_{\sigma \in S_n} x_k(\sigma)\sigma\right)\left(\sum_{\tau \in S_n} x_l(\tau)\tau\right),\\
 &=\left(\sum_{\sigma \in S_n} \frac{\op_{\sigma}(k)}{(2k+1)^n}\sigma\right)\left(\sum_{\tau \in S_n} \frac{\op_{\sigma}(l)}{(2l+1)^n}\tau\right),\\
 &=\frac{1}{(4kl+2k+2l+1)^n}\sum_{\pi\in S_n} \left( \sum_{\sigma\tau=\pi} \op_{\sigma}(k)\op_{\tau}(l)\right)\pi.
\end{align*}
But equation \eqref{eq:decomp1} gives
\[
 \sum_{\sigma\tau=\pi} \op_{\sigma}(k)\op_{\tau}(l)=\op_{\pi}(2kl+k+l),
\]
so we obtain
\begin{align*}
\phi_n(k)\phi_n(l) &=\frac{1}{(2(2kl+k+l)+1)^n}\sum_{\pi\in S_n} \op_{\pi}(2kl+k+l)\pi,\\
&=\sum_{\pi \in S_n} x_{2kl+k+l}(\pi)\pi,\\
&=\phi_n(2kl+k+l),
\end{align*}
as claimed.
\end{proof}

The immediate consequence of these identities has to do with repeated shuffles. For example, the distribution after two lazy $m$-shuffles is $\phi(m)^2$, and Corollary \ref{cor:convolution} tells us that $\phi(m)^2 = \phi(2m^2+2m)$, so two sequential lazy $m$-shelf shuffles gives the same distribution as one lazy $(2m^2+2m)$-shuffle. This means two sequential lazy $10$-shelf shuffles give the same distribution as one pass through a lazy $220$-shelf shuffler.

\section{Convergence results}\label{sec:converge}

In both \cite{BD} and \cite{DFH} we get estimates for how quickly shuffling converges to the uniform distribution on $S_n$. We follow those papers in considering the following measures for any probability distribution $\Prob$ on $S_n$. We let $U$ be the uniform distribution, so that $U(\pi)=1/n!$ for all $\pi$ in $S_n$, and define the \emph{total variation distance}
\[
 \|\Prob-U\|_{TV} =\frac{1}{2}\sum_{\pi \in S_n} |\Prob(\pi) - U(\pi)| = \frac{1}{2}\sum_{\pi \in S_n} \left|\Prob(\pi) - \frac{1}{n!} \right|,
\]
the \emph{separation distance}
\[
 \sep(\Prob) =\max_{\pi \in S_n} \left( 1- \frac{\Prob(\pi)}{U(\pi)} \right) =\max_{\pi \in S_n} \left( 1- n!\Prob(\pi)\right),
\]
and \emph{$l_{\infty}$ distance}
\[
 \|\Prob-U\|_{\infty} = \max_{\pi \in S_n} \left\vert 1- \frac{\Prob(\pi)}{U(\pi)} \right\vert = \max_{\pi \in S_n} \left\vert 1- n!\Prob(\pi)\right\vert.
\]
It is elementary that $\|\Prob-U\|_{TV} \leq \sep(\Prob) \leq \|\Prob-U\|_{\infty}$.

Let
\[
 x_m(k) = x_m(\pi) \mbox{ for some $\pi$ with $\lpk(\pi)=k$,}
\]
and similarly define $x_m^*(k)$ in terms of peaks and $x_m^+(k)$ in terms of descents.

Taking the expressions for order polynomials from Proposition \ref{prp:chains}, we find the following expressions for our probabilities:
\begin{align*}
 x_m(k) &= \frac{4^k}{(2m+1)^n}\sum_{a\geq 0} \binom{n+m-a}{n}\binom{n-2k}{a-k}, \\
 x_m^*(k) &= \frac{4^{k+1}}{2(2m)^n}\sum_{a\geq 0} \binom{n-1+m-a}{n}\binom{n-1-2k}{a-k},
\mbox{ and}\\
 x_m^+(k) &= \frac{1}{m^n}\binom{n-1+m-k}{n}.
\end{align*}

For example in the lazy case, our total variation distance can be expressed as
\begin{equation}\label{eq:TV}
 \|x_m-U\|_{TV} =\frac{1}{2}\sum_{k=0}^{\lfloor n/2 \rfloor} l(n,k)\left|x_m(k) - \frac{1}{n!}\right|,
\end{equation}
where $l(n,k)=|\{\pi \in S_n : \lpk(\pi) = k\}|$. From \cite{Ma}, the numbers $l(n,k)$ satisfy the recurrence
\[
l(n,k)=(2k+1)l(n-1,k) + (n+1-2k)l(n-1,k-1),
\]
with boundary conditions $l(n,0)=1$ and $l(n,k)=0$ if $k>n/2$. The formulas make it not too difficult to use Equation \eqref{eq:TV} to compute the total variation distance for realistic values of $m$ and $n$. For example, in Table \ref{tab:TV}, we see total variation distance for various values of $m$ and $n=52$, comparing $x_m$ (lazy), $x_m^*$ (standard), and $x_m^+$ (strict). We note also that if $m=10$ and we pass through the lazy shuffler twice it is the same as $m=220$. In this case $\|x_m-U\|_{TV}=.0083$. The table also shows that lazy shuffling usually does better than standard, but for $m=30$ and $m=35$, standard does better; we do not have an explanation for this.

\begin{table}
{\small
\begin{tabular}{|c| cccccccccccc|}
\hline
 $m$ & 10 & 15 & 20 & 25 & 30 & 35 & 50 & 100 & 150 & 200 & 250 & 300\\
\hline
\hline
 Lazy  & 1 & .9372 & .7184 & .5164 & .3936 & .3003 & .1509 & .0392 & .0177 & .0100 & .0064 & .0045 \\
\hline
 Standard & 1 & .9427 & .7201 & .5440 & .3910 & .2993 & .1586 & .0409 & .0183 & .0103 & .0066 & .0046 \\
\hline
 Strict & 1 & 1 & .9981 & .9825 & .9468 & .8932 & .7336 & .4199 & .2857 & .2131 & .1709 & .1438\\
 \hline
\end{tabular}
}
\bigskip
\caption{Total variation distance for shelf shufflers with $m$ shelves and $n=52$ cards, in each of the three operating modes.}\label{tab:TV}
\end{table}

The monotonicity lemma for order polynomials, Lemma \ref{lem:monotone}, implies that $x_m(k) \geq x_m(k+1)$, $x_m^*(k)\geq x_m^*(k+1)$, and $x_m^+(k)\geq x_m^+(k+1)$. Thus, we see that both the $l_{\infty}$ and separation distances are achieved at the extremes.

\begin{obs}\label{obs:extremes}
For any distribution $\Prob \in \{x_m, x_m^*, x_m^+\}$, we have
\[
 \sep(\Prob) = \max\{ 1-n!\Prob(0), 1-n!\Prob(k_{\max})\},
\]
and
\[
 \| \Prob-U\|_{\infty} = \max\{ |1-n!\Prob(0)|, |1-n!\Prob(k_{\max})|\},
\]
where $k_{\max} = \lfloor n/2\rfloor, \lfloor (n-1)/2\rfloor,$ and $n-1$, for $x_m$, $x_m^*$, and $x_m^+$, respectively.
\end{obs}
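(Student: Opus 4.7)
The proof is essentially an immediate corollary of the Monotonicity Lemma (Lemma \ref{lem:monotone}) combined with Proposition \ref{prp:shufprobs}, so the plan is to assemble these pieces and identify the extremes.

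First I would observe that by Proposition \ref{prp:shufprobs}, each probability $\Prob(\pi)$ depends on $\pi$ only through the appropriate permutation statistic: left peaks for $x_m$, peaks for $x_m^*$, and descents for $x_m^+$. This justifies the notation $\Prob(\pi) = \Prob(k)$ where $k$ is the statistic value of $\pi$. Lemma \ref{lem:monotone} then says that the order polynomials, hence the probabilities themselves (which differ from order polynomials by a constant factor depending only on $m$ and $n$), are monotone decreasing in $k$. Consequently, the function $k \mapsto 1 - n!\Prob(k)$ is monotone increasing in $k$.

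Next I would compute each of the two quantities. For the separation distance, $\sep(\Prob) = \max_\pi(1 - n!\Prob(\pi))$ is the maximum of a monotone increasing function of $k$ over the range $0 \leq k \leq k_{\max}$, hence is achieved at $k = k_{\max}$. Writing the answer as $\max\{1 - n!\Prob(0),\, 1 - n!\Prob(k_{\max})\}$ is then tautological, given the monotonicity; the formulation emphasizes that both endpoints are candidates. For the $\ell_\infty$ distance, we take the maximum of the absolute value of a monotone function, which is always attained at an endpoint of the range, so the maximum is $\max\{|1 - n!\Prob(0)|,\, |1 - n!\Prob(k_{\max})|\}$.

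The only remaining matter, and arguably the most concrete step, is to record the maximum possible statistic value $k_{\max}$ in each of the three settings. For $x_m^+$, the descent statistic ranges from $0$ to $n-1$ (with the reverse permutation $n(n-1)\cdots 1$ achieving the maximum). For $x_m^*$, a peak requires indices $2 \leq i \leq n-1$ with $\pi(i-1) < \pi(i) > \pi(i+1)$, and disjoint peaks force $k_{\max} = \lfloor (n-1)/2 \rfloor$. For $x_m$, left peaks additionally allow a descent in position $1$, giving $k_{\max} = \lfloor n/2 \rfloor$. I do not anticipate a genuine obstacle here; the entire argument is a short deduction, with the Monotonicity Lemma doing all the real work.
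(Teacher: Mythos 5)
Your proposal is correct and matches the paper's own (tersely sketched) reasoning: the Observation is stated immediately after the paper notes that Lemma \ref{lem:monotone} makes $x_m(k), x_m^*(k), x_m^+(k)$ monotone decreasing in $k$, and the paper treats the conclusion as self-evident from there. You spell out the same steps — the probability is a function of the single statistic (which uses Proposition \ref{prp:chains} to see that $\op_\pi(m)$ depends only on $\lpk(\pi)$, etc., together with Proposition \ref{prp:shufprobs}), monotonicity passes from order polynomials to probabilities by dividing by the constant normalizer, $1-n!\Prob(k)$ is then monotone increasing in $k$, and a monotone function and its absolute value each attain their maximum over a finite range at an endpoint. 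Your identification of the three $k_{\max}$ values agrees with the paper, and your remark that the separation-distance formula is redundant (the max is always $1-n!\Prob(k_{\max})$) is accurate and, if anything, a small improvement in clarity.
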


The $l_{\infty}$ and separation distances are easy to study for strict shelf shufflers using
the explicit formula for $x_m^+(\pi)$. More subtle calculations are required for standard shelf
shufflers \cite{DFH}. For lazy shelf shufflers the asymptotics are the same as for standard shelf
shufflers. More precisely, we have the following result, which shows that for $n$ cards, order $n^{3/2}$ shelves are necessary and sufficient for randomness.

\begin{thm} Consider the lazy shelf shuffling measure $x_m$ with $n$ cards and $m$ shelves
(and the additional shelf $0$). Suppose $m=cn^{3/2}$. Then as $n \rightarrow \infty$ with $0 < c < \infty$ fixed,
\[ ||x_m-U||_{\infty} \sim e^{1/(12c^2)} - 1 \]
\[ \sep(x_m) \sim 1 - e^{-1/(24c^2)} \]
\end{thm}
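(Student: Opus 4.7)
The plan is to use Observation \ref{obs:extremes} to reduce everything to the asymptotics of $n!\,x_m(0)$ and $n!\,x_m(k_{\max})$, where $k_{\max}=\lfloor n/2\rfloor$. Specifically, I aim to show that under $m=cn^{3/2}$,
\[
 n!\,x_m(0)\;\longrightarrow\;e^{1/(12c^2)}\qquad\text{and}\qquad n!\,x_m(k_{\max})\;\longrightarrow\;e^{-1/(24c^2)}.
\]
Since $e^{1/(12c^2)}-1>1-e^{-1/(24c^2)}$ for every $c>0$ (a quick Taylor check on both sides), the $l_\infty$ maximum sits at $\pi=\id$ and the separation maximum at the peaked permutations, producing the two stated asymptotics.

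For the peaked limit, take $n$ even so $k_{\max}=n/2$; Proposition \ref{prp:chains} collapses (since $\binom{n-2k_{\max}}{a-k_{\max}}$ is nonzero only at $a=n/2$) to $\op_\pi(m)=2^n\binom{m+n/2}{n}$ whenever $\lpk(\pi)=n/2$. This gives
\[
 n!\,x_m(n/2)\;=\;\prod_{i=0}^{n-1}\Bigl(1+\tfrac{n-2i-1}{2m+1}\Bigr),
\]
whose logarithm I expand in Taylor series. The linear term sums to $0$ by symmetry; the quadratic term uses $\sum_{i=0}^{n-1}(n-2i-1)^2=n(n^2-1)/3\sim n^3/3$ divided by $2(2m+1)^2\sim 8c^2 n^3$, contributing $-1/(24c^2)$; the cubic term vanishes by symmetry and quartic and higher terms are $O(n^{-1/2})$. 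The odd-$n$ case is analogous with $k_{\max}=(n-1)/2$.

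For the identity limit, Proposition \ref{prp:chains} gives $\sum_{m\geq 0}\op_\id(m)t^m=(1+t)^n/(1-t)^{n+1}$; by bivariate symmetry this also equals $\op_\id(m)=[x^n](1+x)^m/(1-x)^{m+1}$. The substitution $x=\tanh u$ cleans things up strikingly: one computes $(1+x)^m/(1-x)^{m+1}=e^{(2m+1)u}\cosh u$ and $dx=\operatorname{sech}^2 u\,du$, yielding
\[
 \op_\id(m)\;=\;[u^n]\,e^{(2m+1)u}\,F(u),\qquad F(u):=\cosh^n u\,\bigl(u/\sinh u\bigr)^{n+1}.
\]
The classical expansions $\log\cosh u=\tfrac{u^2}{2}-\tfrac{u^4}{12}+\cdots$ and $\log(u/\sinh u)=-\tfrac{u^2}{6}+\tfrac{u^4}{180}-\cdots$ combine to $\log F(u)=\tfrac{2n-1}{6}u^2+O(nu^4)$. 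I then apply saddle-point analysis to $\frac{1}{2\pi i}\oint e^{(2m+1)u}F(u)u^{-n-1}\,du$: the saddle of the dominant factor sits at $u^*\sim(n+1)/(2m+1)\sim 1/(2c\sqrt n)$, and $F'/F\sim nu\sim\sqrt n$ is much smaller than $2m+1\sim n^{3/2}$ there, so $F$ is essentially constant across the saddle. The saddle evaluation yields $\op_\id(m)\sim F(u^*)\cdot (2m+1)^n/n!$, hence $n!\,x_m(0)\sim F(u^*)\to \exp\!\bigl(\tfrac{n}{3}(u^*)^2\bigr)=e^{1/(12c^2)}$.

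The main technical obstacle is making this saddle estimate rigorous: the correction $-(7n/90)u^4$ to $\log F$ is only $O(1/n)$ at the saddle scale $u^*\sim n^{-1/2}$, but this must be controlled uniformly on a contour where the Gaussian decay of the dominant factor takes hold. An equivalent and elementary route is to expand
\[
 n!\,x_m(0)\;=\;\sum_{j=0}^{n}\frac{n(n-1)\cdots(n-j+1)}{(2m+1)^j}\,[u^j]F(u),
\]
argue term-by-term that the $j=2k$ contribution tends to $(12c^2)^{-k}/k!$ (using $n^{2k}/(2m+1)^{2k}\sim(4c^2 n)^{-k}$ and $[u^{2k}]F(u)\sim (n/3)^k/k!$) while odd $j$ vanishes, then invoke dominated convergence with a uniform tail bound on $[u^j]F(u)$; the sum over $k$ gives $\sum_{k\geq 0}(12c^2)^{-k}/k!=e^{1/(12c^2)}$, completing the proof.
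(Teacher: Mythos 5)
The reduction to the two extreme permutations via Lemma~\ref{lem:monotone} and Observation~\ref{obs:extremes} matches the paper exactly. The paper then outsources all of the remaining asymptotic analysis to~\cite{DFH} (``the reader is referred for all the analytic details''), whereas you supply your own calculations. Your treatment of the peaked extreme is complete, correct, and elementary: $n!\,x_m(n/2)=\prod_{i=0}^{n-1}\bigl(1+\tfrac{n-1-2i}{2m+1}\bigr)$, the odd log-terms vanish by symmetry, the quadratic term uses $\sum_i(n-1-2i)^2=n(n^2-1)/3$, and the quartic and higher corrections are $o(1)$. Your treatment of the identity extreme --- rewriting $\op_{\id}(m)=[x^n](1+x)^m/(1-x)^{m+1}$ via the Delannoy symmetry (visible from the bivariate generating function $1/(1-x-t-xt)$), substituting $x=\tanh u$, and extracting $[u^n]\,e^{(2m+1)u}F(u)$ with $F(u)=\cosh^n u\,(u/\sinh u)^{n+1}$ --- is a clean analytic device that does not appear in~\cite{DFH}, which proceeds probabilistically. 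The algebraic steps all verify: $(1+\tanh u)^m(1-\tanh u)^{-(m+1)}=e^{(2m+1)u}\cosh u$, the Jacobian $\operatorname{sech}^2 u$ combines with $x^{-(n+1)}$ to produce exactly your $F(u)$, $F$ is even so the odd coefficients drop, and $\log F(u)=\tfrac{2n-1}{6}u^2+O(nu^4)$ with the coefficients you state.

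The one genuinely unfinished step --- which you flag --- is the uniform estimate on $[u^{2k}]F(u)$ required to justify dominated convergence in $n!\,x_m(0)=\sum_k \tfrac{n(n-1)\cdots(n-2k+1)}{(2m+1)^{2k}}[u^{2k}]F(u)$. You need something like $|[u^{2k}]F(u)|\leq(Cn)^k/k!$ uniformly in $n$ and $k$, so each term is dominated by $(C/(4c^2))^k/k!$. This is not immediate: the Taylor coefficients of $u/\sinh u$ alternate in sign, so you cannot bound $[u^{2k}]F$ by evaluating $F$ on the positive real axis, and the crude Cauchy estimate $\max_{|u|=r}|F(u)|/r^{2k}$ is of size $e^{O(n)}$ for fixed $r$, far too weak. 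One workable route is to write $F(u)=\exp\bigl(\tfrac{2n-1}{6}u^2+nR(u)\bigr)$ with $R(u)=\sum_{j\geq 2}r_ju^{2j}$ and $|r_j|\leq C^j$, then control $[u^{2k}]$ by a Bell-polynomial bound. A shortcut that sidesteps the issue entirely --- and is closer in spirit to~\cite{DFH} --- is to observe that $n!\,x_m(0)=\ee\bigl[\prod_{i=0}^{n-1}\bigl(1+\tfrac{n-1-2T-2i}{2m+1}\bigr)\bigr]$ with $T=X-n/2$, $X\sim\text{Binomial}(n,1/2)$: the linear-in-$T$ part of the log exponentiates exactly to $\cosh^n\bigl(\tfrac{n}{2m+1}\bigr)\to e^{1/(8c^2)}$, the $T$-free quadratic part contributes $e^{-1/(24c^2)}$, and the remaining terms are $o(1)$ uniformly on the bulk of the binomial distribution, giving $e^{1/(12c^2)}$ directly. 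Either way, the missing step is isolated and fixable, and the overall structure of your argument is sound.
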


\begin{proof} The proof is a very minor modification of arguments in \cite{DFH}.
Assume $n$ to be even (for simplicity). Then by our formula for $x_m(k)$ and Observation \ref{obs:extremes}, we know
the extreme values are
\[ x_m(0) = \frac{1}{(2m+1)^n} \sum_{a \geq 0} {n+m-a \choose n}  {n \choose a}, \]
while
\[ x_m(n/2) = \frac{1}{(m+1/2)^n} {m + n/2 \choose n} .\]

Arguing as in \cite{DFH} (to which the reader is referred for all the analytic details), we have that when $m=cn^{3/2}$ and $n \rightarrow \infty$ with $0 < c < \infty$ fixed,
\[ \frac{n!}{(2m+1)^n} \sum_{a \geq 0} {n+m-a \choose n}  {n \choose a} -1 \sim
e^{1/(12c^2)} - 1 \]
\[ 1 - \frac{n!}{(m+1/2)^n} {m + n/2 \choose n} \sim 1 - e^{-1/(24c^2)}, \] and the result follows. \end{proof}

\section{Cycle structure for lazy shelf shufflers}\label{sec:cycle}

The cycle structure of strict shelf shufflers is the same as the cycle structure of ordinary
riffle shuffles, carefully studied in \cite{DMP}. The cycle structure of standard shelf shufflers
is studied in \cite{DFH}. In this section we find a generating function for cycle
structure of lazy shelf shufflers. Then we use it to derive the joint distribution of permutations
by cycles and left peaks.

Let $N_i(\pi)$ denote the number of $i$-cycles of a permutation $\pi$, and define
\[ f_{i,m} = \frac{1}{2i} \sum_{d|i \atop d \ \textrm{odd}} \mu(d) [(2m+1)^{i/d}-1], \] where
$\mu$ is the M\"obius function of elementary number theory. Let $x_m(\pi)$ and
$y_m(\pi)$ be as in previous sections.

\begin{thm} \label{cycstruc}
\begin{eqnarray*}
& & 1 + \sum_{n \geq 1} u^n \sum_{\pi \in S_n} x_m(\pi) \prod_{i \geq 1} z_i^{N_i(\pi)} \\
& = & \frac{1}{1-z_1 u/(2m+1)} \prod_{i \geq 1} \left( \frac{1+z_iu^i/(2m+1)^i}
{1-z_i u^i/(2m+1)^i} \right)^{f_{i,m}}.
\end{eqnarray*}
\end{thm}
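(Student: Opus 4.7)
The plan is to follow the symbolic-dynamics and necklace-counting framework developed by Diaconis--McGrath--Pitman for riffle shuffles \cite{DMP} and adapted to the standard shelf shuffler in \cite{DFH}, modifying it to handle the presence of the lazy shelf $0$.

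First, by Proposition \ref{prp:equiv}, a random lazy $m$-shelf shuffle is the sorting permutation $\pi(f)$ of a function $f$ drawn uniformly from $\Sigma^n$, where $\Sigma=\{0,\bar 1,1,\ldots,\bar m,m\}$ has $2m+1$ elements. So the target generating function equals
\[
 \sum_{n\geq 0}\frac{u^n}{(2m+1)^n}\sum_{f\in\Sigma^n}\prod_{i\geq 1}z_i^{N_i(\pi(f))}.
\]
To apply the exponential formula, I would first verify that if $C\subseteq[n]$ is a union of cycles of $\pi(f)$, then the cycles of $\pi(f)$ supported on $C$ depend only on $f|_C$ (sorting is ``local'' in this sense). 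Hence different cycles contribute independently, and it suffices to enumerate weighted single cycles.

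Second, I would set up a signed-shift dynamical picture on $\Sigma^{\NN}$: give $\Sigma^{\NN}$ the ``lazy lexicographic'' ordering induced by the $P$-partition sorting rule of Definition \ref{def:permf} (two sequences with a common prefix ending in an unbarred letter are compared forward on the remainder, while after a barred letter the remainder is compared in reverse). Cards should now be thought of as carrying iid uniform infinite $\Sigma$-sequences, and the shuffle corresponds to a single application of the shift $T$ on $\Sigma^{\NN}$: cycles of $\pi(f)$ correspond to primitive periodic orbits of $T$, and an orbit of length $i$ contributes to $N_i$. Computing $|\mathrm{Fix}(T^i)|$ under the signed-lex ordering singles out the parity-twisted count that runs only over odd divisors, yielding $2i\cdot f_{i,m}$ after M\"obius inversion (the $-1$ inside the bracket defining $f_{i,m}$ arises by excluding the constant sequence $000\cdots$).

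Third, I would assemble the pieces via the exponential formula. Each primitive orbit of length $i$ with an even number of barred letters contributes $\tfrac{1}{1-z_iu^i/(2m+1)^i}$, and each one with an odd number contributes $\tfrac{1}{1+z_iu^i/(2m+1)^i}$; the signed-shift parity structure pairs these evenly, producing the combined factor
\[
\left(\frac{1+z_iu^i/(2m+1)^i}{1-z_iu^i/(2m+1)^i}\right)^{f_{i,m}}.
\]
The excluded constant orbit $000\cdots$ is special: it represents cards sent to the lazy shelf that become fixed points in $\pi(f)$, and it contributes exactly the isolated factor $\tfrac{1}{1-z_1u/(2m+1)}$. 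Multiplying these together yields the stated formula.

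The main obstacle is the second step: proving that cycles of $\pi(f)$ correspond bijectively to primitive periodic orbits of $T$ with the correct parity weights. The combinatorial difficulty lies in the signed-lexicographic comparison rule, which I would establish by induction on cycle length using the sorting rule of Definition \ref{def:permf} (tracking how the alternation between barred and unbarred labels along a cycle forces the sign involution). Once this correspondence is in hand, the M\"obius inversion and the exponential formula proceed as in \cite{DMP,DFH}, with the only new ingredient being the separate treatment of the constant orbit, giving rise to the extra factor $\tfrac{1}{1-z_1u/(2m+1)}$ that distinguishes the lazy case from the standard one.
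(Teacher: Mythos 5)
Your approach is fundamentally different from the paper's, which is a two-line argument: it cites Theorem 7 of \cite{F1} for the generating function of $y_m(\pi)$, then uses $x_m(\pi)=y_m(\pi^{-1})$ (Proposition \ref{prp:shufprobs}) together with the fact that $\pi$ and $\pi^{-1}$ share the same cycle type. That observation instantly transports the known up-down riffle result to the lazy shelf shuffler. You instead attempt to re-derive the underlying result from scratch via a necklace/shift-dynamics argument in the style of Diaconis--McGrath--Pitman, which is essentially the route by which the cited Theorem 7 of \cite{F1} is itself proved; noticing the inverse trick would have shortened your proof dramatically.

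Within your direct approach there are genuine gaps. The ``locality'' claim in your first step --- that the cycles of $\pi(f)$ supported on a cycle-invariant set $C$ depend only on $f|_C$ --- is not true as stated: whether $\pi(f)(C)=C$ at all, and which card sits at which position within $C$, both depend on the values of $f$ off $C$ (e.g.\ whether some $j\notin C$ sorts between two elements of $C$). The actual independence structure in DMP-type arguments comes from the Gessel--Reutenauer bijection between pairs (permutation, compatible word) and multisets of primitive necklaces, and this bijection is not a naive restriction argument. Similarly, the assertion that the constant orbit $000\cdots$ ``contributes exactly the isolated factor $1/(1-z_1u/(2m+1))$'' is unsupported: cards with $f(j)=0$ are not automatically fixed points of $\pi(f)$, since $\pi(f)(j)=j$ is a global condition on $f$. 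To make your argument rigorous you would need to carefully set up the signed Gessel--Reutenauer correspondence over the alphabet $\{0,\bar 1,1,\ldots,\bar m,m\}$, verify the parity-twisted necklace count giving $f_{i,m}$, and show how the zero-letter gives rise to the extra geometric factor --- all of which is nontrivial and is in effect the content of the result you are citing informally. The framework you sketch is the right one for proving the cited theorem, but the sketch as written has holes in precisely the places where the hard work lies.
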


\begin{proof} It follows from Theorem 7 of \cite{F1} that
\begin{eqnarray*}
& & 1 + \sum_{n \geq 1} u^n \sum_{\pi \in S_n} y_m(\pi) \prod_{i \geq 1} z_i^{N_i(\pi)} \\
& = & \frac{1}{1-z_1 u/(2m+1)} \prod_{i \geq 1} \left( \frac{1+z_iu^i/(2m+1)^i}
{1-z_i u^i/(2m+1)^i} \right)^{f_{i,m}}.
\end{eqnarray*} Note that a permutation and its inverse have the same cycle structure, and recall from
Proposition \ref{prp:shufprobs} that $x_m(\pi)=y_m(\pi^{-1})$. The result follows.
\end{proof}

The generating function in Theorem \ref{cycstruc} allows one to study cycle structure for lazy shelf-shufflers, in perfect analogy with the results of \cite{DMP} for ordinary riffle shuffles. The following proposition illustrates this.

\begin{prop} \label{fix} The average number of fixed points after a lazy shelf shuffle with $n$ cards and $m$ shelves is equal to
\begin{eqnarray*}
 1 + 2 \sum_{k=1}^{(n-1)/2} \frac{1}{(2m+1)^{2k}} &  \mbox{if $n$ is odd,} \\
 1 + 2 \sum_{k=1}^{n/2-1} \frac{1}{(2m+1)^{2k}} + \frac{1}{(2m+1)^n} &  \mbox{if $n$ is even.}
\end{eqnarray*}
\end{prop}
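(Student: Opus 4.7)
The plan is to exploit the joint cycle generating function from Theorem \ref{cycstruc}: since the expected number of fixed points equals $\sum_{\pi \in S_n} x_m(\pi) N_1(\pi) = [u^n] \dfrac{\partial G}{\partial z_1}\Big|_{z_1 = z_2 = \cdots = 1}$, where $G$ denotes the right-hand side of Theorem \ref{cycstruc}, I only need to isolate the $z_1$-dependence, differentiate, and extract coefficients. Writing $v = u/(2m+1)$ and using the direct computation $f_{1,m} = \frac{1}{2}[(2m+1) - 1] = m$, the $z_1$-dependent portion of $G$ (with all $z_i = 1$ for $i \geq 2$) takes the form
\[
G(z_1, 1, 1, \ldots; u) = \frac{(1+z_1 v)^m}{(1-z_1 v)^{m+1}} \cdot H(u),
\]
where $H(u)$ collects the $i \geq 2$ factors.

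The key trick is that I never need to analyze $H(u)$ directly. Since $x_m$ is a probability distribution on $S_n$ for each $n$, setting $z_1 = 1$ must give $G(1,1,\ldots;u) = \sum_{n \geq 0} u^n = \dfrac{1}{1-u}$. This forces $H(u) = \dfrac{(1-v)^{m+1}}{(1+v)^m (1-u)}$. Differentiating the displayed expression in $z_1$ and setting $z_1 = 1$ via the quotient rule gives, after the $(1 \pm v)^m$ cancellations with $H(u)$,
\[
\frac{\partial G}{\partial z_1}\bigg|_{z_1=1} = \frac{v\bigl((2m+1) + v\bigr)}{(1-v^2)(1-u)} = \frac{u + u^2/(2m+1)^2}{(1-u)\bigl(1 - u^2/(2m+1)^2\bigr)},
\]
where the last step uses $v(2m+1) = u$. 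This is the generating function whose coefficients are the desired expectations.

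The final step is coefficient extraction. Expanding $\dfrac{1}{1 - u^2/(2m+1)^2} = \sum_{k \geq 0} u^{2k}/(2m+1)^{2k}$, multiplying by $u + u^2/(2m+1)^2$, and then convolving against $\dfrac{1}{1-u} = \sum_{n \geq 0} u^n$, the coefficient of $u^n$ picks up $1$ from the $u$-term and $\dfrac{1}{(2m+1)^{2k}}$ (respectively $\dfrac{2}{(2m+1)^{2k}}$) for each $k \geq 1$ with $n = 2k$ (respectively $n > 2k$). Splitting by the parity of $n$ then collects these into the two formulas stated in the proposition. The only real bookkeeping obstacle is this last parity split, which is routine; everything else amounts to formal generating function manipulation made painless by the normalization shortcut that eliminates $H(u)$.
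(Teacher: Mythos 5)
Your proof is correct and follows essentially the same route as the paper's: both exploit Theorem \ref{cycstruc}, use the probability normalization $G(1,1,\ldots;u)=\tfrac{1}{1-u}$ to eliminate the intractable product over $i\geq 2$, then differentiate in $z_1$ and extract coefficients. The only difference is cosmetic — the paper leaves the final generating function as a sum of two simple fractions in $u/(2m+1)$, whereas you combine them into the single rational form $\dfrac{u+u^2/(2m+1)^2}{(1-u)(1-u^2/(2m+1)^2)}$ before extracting coefficients — but the underlying computation is identical.
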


\begin{proof} Set $z_1=z$ and all other $z_i=1$ in Theorem \ref{cycstruc}. The right hand side becomes
\[ \frac{1}{1-z u/(2m+1)} \left( \frac{1+zu/(2m+1)}{1-zu/(2m+1)} \right)^{f_{1,m}} \prod_{i \geq 2} \left( \frac{1+u^i/(2m+1)^i}
{1-u^i/(2m+1)^i} \right)^{f_{i,m}}. \]

Setting all $z_i=1$ in Theorem \ref{cycstruc} gives that
\[ \frac{1}{1-u} = \frac{1}{1-u/(2m+1)} \prod_{i \geq 1} \left( \frac{1+u^i/(2m+1)^i}
{1-u^i/(2m+1)^i} \right)^{f_{i,m}}.\]

Combining the previous two paragraphs, one concludes that
\[ 1 + \sum_{n \geq 1} u^n \sum_{\pi \in S_n} x_m(\pi) z^{N_1(\pi)} \] is equal to
\[ \frac{1}{1-u} \left( \frac{1-u/(2m+1)}{1-zu/(2m+1)} \right)^{m+1}
\left( \frac{1+zu/(2m+1)}{1+u/(2m+1)} \right)^m .\]

Differentiating with respect to $z$ and setting $z=1$ shows that the expected number of fixed points is the coefficient of $u^n$ in
\[ \frac{1}{1-u} \left[ \frac{(m+1) u/(2m+1)}{1-u/(2m+1)} +
\frac{mu/(2m+1)}{1+u/(2m+1)} \right], \] and the proposition easily follows from
this. \end{proof}

\begin{rem} In addition to the intrinsic interest of fixed points, Proposition \ref{fix} shows that the expected number of fixed points is close to $1$ when $m$ tends to infinity arbitrarily slowly with $n$. In fact Theorem \ref{cycstruc} can be used to show that the entire distribution of fixed points tends to a Poisson(1) limit when $m$ tends to infinity arbitrarily slowly with $n$. Thus the number of shelves for the distribution of fixed points to be close to that of a uniform permutation is far fewer than the order $n^{3/2}$ shelves needed to randomize the entire deck.

Moreover, these ideas imply a lower bound saying that $m$ must go to infinity with $n$ in order for $x_m$ to be close to the uniform distribution. Indeed, for any probability distributions $P$ and $Q$ on a finite set $X$,
\[ \|P-Q\|_{TV} = \max_{A \subseteq X} |P(A)-Q(A)|.\] So if $A$ is any subset of the state space $X$, \[ \|P-Q\|_{TV} \geq |P(A)-Q(A)|.\] So if the distribution of fixed points is far from random, then $x_m$ is not close to the uniform distribution.

To get shaper total variation lower bounds, one should study the distribution of the number of left peaks under the distribution $x_m$. In particular, since $x_m$ is supported on permutations with at most $m$ left peaks, it's clear that if $m$ is fixed then for large $n$, the distribution $x_m$ is far from uniform.
\end{rem}

\begin{rem} In contrast to the distribution of the number of fixed points, one might want to study features of large cycles, such as the length of the longest cycle, under the distribution $x_m$. Using Theorem \ref{cycstruc}, one can prove that as $n \rightarrow \infty$, the distribution of the length of the longest cycle under $x_m$ is close to the distribution of the length of the longest cycle under the uniform distribution, even if $m=1$.
\end{rem}

Our final result gives a generating function for the joint distribution of permutations
by number of left peaks and cycle type. This result appeared in a recent paper of Gessel
and Zhuang (\cite{GeZh}, Theorem 7.2), though our proof is completely different, and we discovered it
 independently. It is an analog of a result in \cite{DFH} which gave a
generating function for the joint distribution of permutations by number of peaks and cycle type,
and of a result of \cite{Fu2} giving a generating function for the
joint distribution of permutations by number of descents and cycle type.

\begin{cor}
\begin{eqnarray*}
& & \frac{t}{1-t} + \sum_{n \geq 1} u^n \sum_{\pi \in S_n} \frac{(1+t)^n}{(1-t)^{n+1}}
\left( \frac{4t}{(1+t)^2} \right)^{\lpk(\pi)} \prod_{i \geq 1} z_i^{N_i(\pi)} \\
& = & \sum_{m \geq 1} t^m \frac{1}{1-z_1 u} \prod_{i \geq 1} \left( \frac{1+z_iu^i}{1-z_iu^i} \right)^{f_{i,m}}.
\end{eqnarray*}
\end{cor}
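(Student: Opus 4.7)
The plan is to derive the identity as a fairly direct corollary of Theorem \ref{cycstruc} combined with the order-polynomial generating function \eqref{eq:opl}, bridged by the identity $x_m(\pi) = \op_\pi(m)/(2m+1)^n$ from Proposition \ref{prp:shufprobs}.

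First, I would perform the substitution $u \mapsto (2m+1)u$ in Theorem \ref{cycstruc}. On the left-hand side this turns each coefficient $u^n x_m(\pi)$ into $u^n (2m+1)^n x_m(\pi) = u^n \op_{\pi}(m)$, while on the right-hand side it simultaneously clears the factors $(2m+1)^i$ appearing in all the denominators. This yields the cleaner identity
\[
 1 + \sum_{n \geq 1} u^n \sum_{\pi \in S_n} \op_\pi(m) \prod_{i\geq 1} z_i^{N_i(\pi)} = \frac{1}{1-z_1 u}\prod_{i\geq 1}\left(\frac{1+z_i u^i}{1-z_i u^i}\right)^{f_{i,m}},
\]
which is now a polynomial identity in $m$ (with the other variables as formal power series).

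Next, I would multiply both sides by $t^m$ and sum over $m$. On the left, the inner sum $\sum_{m} t^m \op_\pi(m)$ is exactly the generating function \eqref{eq:opl}; after pulling out the common prefactor $(1+t)^n/(1-t)^{n+1}$, one recognizes $\left(\tfrac{4t}{(1+t)^2}\right)^{\lpk(\pi)}$ as the remaining $\lpk$-dependent weight. On the right, one directly obtains the corollary's summand $\sum_m t^m \cdot \tfrac{1}{1-z_1 u}\prod_i \cdots$, and the constant term $\sum_m t^m \cdot 1 = \tfrac{t}{1-t}$ (coming from the ``$1$'' on the LHS of Theorem \ref{cycstruc}) accounts for the leading $\tfrac{t}{1-t}$ in the corollary's LHS.

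The main obstacle is purely bookkeeping: one must match the low-order contributions on both sides. Specifically, the $m=0$ term on the right of the rescaled identity is $\frac{1}{1-z_1 u}$ (since $f_{i,0}=0$ for all $i$), and the $m=0$ contribution to $\sum_m t^m \op_\pi(m)$ on the left is $\op_\pi(0) = [\pi=\id]$, which sums to $\sum_{n\geq 0} u^n z_1^n = \frac{1}{1-z_1 u}$. Choosing the summation range so that these two ``identity-permutation'' contributions line up correctly collapses the identity to precisely the stated form, with no further combinatorial input needed. In particular, this route bypasses Gessel--Zhuang's proof entirely, recovering their result via the $P$-partition framework built up earlier in the paper.
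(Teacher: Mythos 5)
The proof you propose is essentially the paper's argument read in the forward direction: the paper's proof extracts the coefficient of $t^m$ from the stated identity and rescales $u \mapsto u/(2m+1)$ to reduce to Theorem~\ref{cycstruc}, whereas you start from Theorem~\ref{cycstruc}, rescale $u \mapsto (2m+1)u$ to turn $x_m(\pi)$ into $\op_{\pi}(m)$, and then sum against $t^m$ using Equation~\eqref{eq:opl}. These are the same calculation built from the same two inputs (Theorem~\ref{cycstruc} together with Propositions~\ref{prp:chains} and~\ref{prp:shufprobs}); you have simply reversed the direction of the argument.

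One detail to tighten: your claim that the $m=0$ contributions ``line up correctly'' is not quite right. Summing the rescaled Theorem~\ref{cycstruc} over $m \geq 1$ and using
\[
\sum_{m\geq 1} t^m \op_{\pi}(m) = \frac{(1+t)^n}{(1-t)^{n+1}}\left(\frac{4t}{(1+t)^2}\right)^{\lpk(\pi)} - [\pi = \id]
\]
leaves an extra $-\tfrac{z_1 u}{1 - z_1 u}$ on the left that does not appear in the corollary as written. Equivalently, the stated identity holds in the coefficient of $t^m$ for every $m \geq 1$, which is exactly what the paper's proof checks, but the constant terms in $t$ of the two sides differ by $\tfrac{z_1 u}{1 - z_1 u}$. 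This is a wrinkle in the corollary's statement rather than a defect in your method, and the paper's proof glides past it in the same way; still, your write-up should not assert that the low-order contributions cancel cleanly when they do not.
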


\begin{proof} Take the coefficient of $t^m$ in both sides of the statement of the corollary,
and replace $u$ by $u/(2m+1)$.
 By Propositions \ref{prp:chains} and \ref{prp:shufprobs}, \[ x_m(\pi) = \frac{1}{(2m+1)^n} [t^m] \frac{(1+t)^n}{(1-t)^{n+1}} \left( \frac{4t}{(1+t)^2} \right)^{\lpk(\pi)},\] where
 $[t^m] f(t)$ denotes the coefficient of $t^m$ in a power series $f(t)$. The result follows
 from Theorem \ref{cycstruc}.
\end{proof}

\end{document}